\newcommand{\be}{\begin{equation}}
\newcommand{\ee}{\end{equation}}
\newcommand{\beq}{\begin{eqnarray}}
\newcommand{\eeq}{\end{eqnarray}}
\newcommand{\vp}{\varphi}
\newcommand{\ov}[1]{\overline{#1}}
\newtheorem{thm}{Theorem}[section]
\newtheorem{lma}[thm]{Lemma}
\newtheorem{defn}[thm]{Definition}
\theoremstyle{remark}
\newtheorem{rem}{Remark}[section]
\numberwithin{equation}{section}
\def\be{\begin{equation}}
\def\ee{\end{equation}}
\def\bee{\begin{equation*}}
\def\eee{\end{equation*}}
\def\K{K\"ahler }
\def\Rm{\text{\rm Rm}}
\newcommand{\de}{\partial}
\def\ti{\tilde}
\def\re{\text{\rm Re}}
\def\im{\text{\rm Im}}
\def\arccot{\mathrm{arccot}}
\def\PSH{\mathrm{PSH}}
\def\ve{\varepsilon}
\def\a{{\alpha}}
\def\R{\mathbb{R}}
\def\C{\mathbb{C}}
\def\ddb{\sqrt{-1}\partial\bar\partial}
\def\Re{\mathrm{Re}}
\def\Im{\mathrm{Im}}
\def\J{\mathcal{J}}
\def\H{\mathcal{H}}
\def\X{\mathcal{X}}
\def\length{\mathrm{length}}
\def\dbar{\bar{\partial}}
\def\tr{\mathrm{tr}}
\def\psh{\mathrm{psh}}
\begin{document}

\title{Hypercritical deformed Hermitian-Yang-Mills equation}

\author[J. Chu]{Jianchun Chu}
\address[Jianchun Chu]{Department of Mathematics\\ Northwestern University\\ 2033 Sheridan Road\\ Evanston, IL 60208}
\email{jianchun@math.northwestern.edu}

%
%\author[M.-C. Lee]{Man-Chun Lee}
%\address[Man-Chun Lee]{Department of Mathematics, The Chinese University of Hong Kong, Shatin, Hong Kong, China.}
%\email{mclee@math.cuhk.edu.hk}

\author[M.-C. Lee]{Man-Chun Lee$^1$}
\address[Man-Chun Lee]{Mathematics Institute, Zeeman Building,
University of Warwick, Coventry CV4 7AL; Department of Mathematics, Northwestern University, 2033 Sheridan Road, Evanston, IL 60208}
\email{Man.C.Lee@warwick.ac.uk, mclee@math.northwestern.edu}
\thanks{$^1$Research partially supported by EPSRC grant number P/T019824/1.}

\begin{abstract}
In this work, we study the deformed Hermitian-Yang-Mills equation on compact \K manifold. We introduce the notions of coerciveness and properness of the $\mathcal{J}$-functional on the space of almost calibrated $(1,1)$-forms and show that they are both equivalent to the existence of solutions to the hypercritical deformed Hermitian-Yang-Mills equation.
\end{abstract}

\maketitle

\section{introduction}
Informally, mirror symmetry describes the relation between complex geometry and symplectic geometry on Calabi-Yau manifolds. It predicts a duality between the underlying complex and symplectic structure on a manifold. On the holomorphic side, the deformed Hermitian-Yang-Mills (dHYM) equation is corresponding to the special Lagrangian equation in the setting of the Strominger-Yau-Zaslow mirror symmetry \cite{SYZ96}. This was first appeared in \cite{LeungYauZaslow2001} from the mathematical side drawing from the physics literature \cite{MMMS00}. Analytically, on an $n$-dimensional compact \K manifold $(X^n,\omega)$ associated with a closed real $(1,1)$-form $\a$, the dHYM equation is given by
\begin{equation}\label{intro-dHYM}
\left\{
\begin{array}{ll}
\im\left( e^{-\sqrt{-1}\theta_0}(\a_\varphi+\sqrt{-1}\omega)^n\right)=0,\\[2mm]
\re\left( e^{-\sqrt{-1}\theta_0}(\a_\varphi+\sqrt{-1}\omega)^n\right)>0,
\end{array}
\right.
\end{equation}
where $\a_\varphi=\a+\ddb\varphi$ and $\theta_0$ is a constant. By integrating \eqref{intro-dHYM}, it is easy to see that $\theta_0$ is uniquely determined (modulo $2\pi$) by a cohomological condition:
$$\int_X (\a+\sqrt{-1}\omega)^n\in e^{\sqrt{-1}\theta_0}\cdot \R_{>0}$$
provided that the integral on the left hand side does not vanish. It is believed that the dHYM equation plays a fundamental role in mirror symmetry and its solvability is expected to be related to deep notions of stability in algebraic geometry. We refer the reader to \cite{CollinsShi2020,CollinsXieYau2017} and the references therein for an introduction to the physical and mathematical aspects of the dHYM equation.

On the analytic side, the program of solving the dHYM equation was initiated by Jacob-Yau \cite{JacobYau2017} where they used a parabolic flow to prove the existence of solutions when $(X,\omega)$ has positive bisectional curvature, and the initial data is sufficiently positive. Since then, the solvability of the dHYM equation has been studied extensively. In \cite{CollinsJacobYau2020}, Collins-Jacob-Yau characterized the existence
of solutions to dHYM in the supercritical phase case in the analytic
viewpoint, see also the work of Sz\'ekelyhidi \cite{Szekelyhidi2018} for the study of a very general class of Hessian type equations on compact Hermitian manifolds under similar analytic assumption. It is also conjectured by Collins-Jacob-Yau  \cite{CollinsJacobYau2020} that the solvability to dHYM equation is equivalent to a Nakai-Moishezon type criterion in the supercritical phase case, see \cite{Chen2021,DatarPingali2020,JacobSheu2020,ChuLeeTakahashi2021} for the recent progress to this conjecture. For more related works, we refer readers to \cite{HanYamamoto2019,Takahashi2021,Pingali2019,SchlitzerStoppa2019,HanJin2020,Takahashi2020} and the references therein.

On the other hand, motivated by work of Solomon \cite{Solomon2013}, Thomas \cite{Thomas2001} and Thomas-Yau \cite{ThomasYau2002} in symplectic geometry, Collins-Yau \cite{CollinsYau2021} proposed to study the dHYM equation using an infinite dimensional GIT (Geometric Invariant Theory) approach where they introduced the space \footnote{The formulation taken here is equivalent that in \cite{CollinsYau2021}, see Lemma~\ref{Re Im relationships}.} $\H$ of almost calibrated $(1,1)$-forms in the class $[\a]$:
\[
\H=\left\{\varphi\in C^\infty(X)~|~ \re\left(e^{-\sqrt{-1}\theta_0}\left(\a_\varphi+\sqrt{-1}\omega \right)^n \right)>0 \right\}.
\]
The space $\H$ is a (possibly empty) open subset of the space of smooth, real valued functions on $X$, and hence inherits the structure of an infinite dimensional manifold. On $\H$, Collins-Yau \cite{CollinsYau2021} also introduced the notion of $\ve$-geodesic and the $\J$-functional, and proved (1) $\J$-functional is convex along the $\ve$-geodesic for $\ve\geq 0$; (2) $\vp$ is the critical point of $\J$ if and only if $\vp$ is the solution of the dHYM equation. Furthermore, it was later shown by the authors and Collins \cite{ChuCollinsLee2020} that $\H$ admits a Riemannian structure and a Levi-Civita connection with non-positive sectional curvature. In fact, this is closely related, by mirror symmetry, to the work \cite{Solomon2013} of Solomon on the space of positive (or almost calibrated) Lagrangians. We refer readers to the arxiv version \cite{CollinsYau2018} of \cite{CollinsYau2021} for more discussion on the relation between the GIT approach and the algebraic obstruction on the existence.
%By mirror symmetry, the space $\H$ is mirror to the space of positive (or almost calibrated) Lagrangians studied by Solomon  \cite{Solomon2013}.

In this work, we will continue the study of the existence problem to the dHYM equation from the viewpoint of $\H$. We assume $\H$ to be non-empty so that $\theta_0$ is well-defined modulo $2\pi$, see \cite{JacobYau2017,CollinsXieYau2017}. Without loss of generality, we will assume $0\in \H$. And we will work on the case when $[\a]$ has hypercritical phase, i.e., $\theta_0\in (0,\frac{\pi}{2})$. The following is an equivalent description of space $\H$ (see \eqref{equivalent descriptions of H}):
\[
\H=\left\{\,\vp\in C^{\infty}(X)~\big|~Q_{\omega}(\alpha_{\vp})
\in\left(\theta_{0}-\frac{\pi}{2},\theta_{0}+\frac{\pi}{2}\right)\,\right\},
\]
where $Q_\omega(\a_\varphi)$ is the phase operator, see \eqref{def-PQ}. Here we used the notion of phase operator to emphasis the branch we are using.

The $\J$-functional should be understood as the Kempf-Ness functional in the GIT framework, and thus plays an important role in the study of the solvability for the dHYM equation. In fact, in K\"ahler geometry, the properness or coercivity of certain functional is usually related to the existence of canonical metric. For example, Tian's properness conjecture \cite{Tian1994,Tian2000} predicts that the existence of constant scalar curvature K\"ahler (cscK) metric on compact K\"ahler manifold $(X,\omega)$ is equivalent to the properness of the Mabuchi $K$-energy defined on the space of K\"ahler potentials:
\[
\H_{\psh} = \{\varphi\in C^\infty(X)~|~\omega+\ddb\varphi>0\}.
\]
There are many progress toward this conjecture in the case of K\"ahler-Einstein metrics. In \cite{DingTian1992}, Ding-Tian proved one direction (properness implies existence). When $X$ has trivial automorphism group, Tian \cite{Tian1997} confirmed the converse direction (existence implies properness) under a technical assumption. This assumption was later removed by Tian-Zhu \cite{TianZhu2000}. When $X$ has non-trivial automorphism group, Darvas-Rubinstein \cite{DarvasRubinstein2017} gave an counterexample to the direction (existence implies properness).
By rephrasing the properness of the Mabuchi $K$-energy in terms of $d_{1}$-distance of $\H_{\psh}$, they proposed a modified version of Tian's conjecture, and confirmed it in the K\"ahler-Einstein metric case.

For the general cscK metric case of the above modified conjecture, one direction (existence implies properness) was established by Berman-Darvas-Lu \cite{BDL2020}, while the converse direction (properness implies existence) was proved by Chen-Cheng \cite{ChenCheng2017,ChenCheng2018-1,ChenCheng2018-2}. In particular, if the automorphism group is discrete, the existence of cscK metric will be implied by the properness the Mabuchi $K$-energy in terms of $d_{1}$-distance of $\H_{\psh}$. They also showed that cscK metric exists if and only if the Mabuchi $K$-energy is coercive. For the closely related $J$-equation, the similar result was obtained by Collins-Sz\'ekelyhidi \cite{CollinsSzekelyhidi2017}.

Inspired by the above mentioned works, in order to understand the existence problem of the dHYM equation deeply, we introduce the notions of coercivity and properness of $\mathcal{J}$-functional.
\begin{defn}
We say the $\J$-functional is coercive if there exists constants $\delta,C>0$ such that for any $\vp\in\H$,
\[
\J(\vp) \geq \delta\int_{X}(-\vp)
\left(\Im(\alpha_{\vp}+\sqrt{-1}\omega)^{n}-\Im(\alpha+\sqrt{-1}\omega)^{n}\right)-C.
\]
\end{defn}
This is analogous to \cite[Definition 20]{CollinsSzekelyhidi2017} in the study of $J$-equation since $\im(\alpha_{\vp}+\sqrt{-1}\omega)^{n}$ on $X$ is a natural volume form for $\varphi\in \H$. To define the properness, we will need to introduce the $d_p$ pseudo-distance on $\H$ for $p\geq 1$ which is a generalization of the $d_{2}$-distance considered in \cite{ChuCollinsLee2020}.

\medskip

For $p\geq 1$ and  a smooth path $\varphi(t),t\in [0,1]$ in $\H$, define
\[
E_{p}(\vp,t) = \int_{X}|\vp_{t}|^{p}\,\Re\left(e^{-\sqrt{-1}{\theta}_{0}}(\alpha_{\vp}+\sqrt{-1}\omega)^{n}\right), \;\;
\length_{p}(\varphi) = \int_{0}^{1}E_{p}^{\frac{1}{p}}(\vp,t)dt.
\]
For any $\vp_{0},\vp_{1}\in\H$,  the $d_{p}$-distance between $\varphi_0,\varphi_1$ is defined to be
\[
d_{p}(\vp_{0},\vp_{1})
= \inf\left\{\length_{p}(\vp)\,|\,\text{$\vp$ is a smooth path in $\H$
connecting $\vp_{0}$ and $\vp_{1}$}\right\}.
\]

We will show that $d_p$ is indeed a distance on $\H$ and can be realized by the limit of $\ve$-geodesic as $\ve\rightarrow0$. This might be of independent interest on its own.
\begin{thm}\label{H dp theorem}
For each $p\geq1$, the space $(\H,d_{p})$ is a metric space. For any $\vp_{0},\vp_{1}\in\H$, let $\vp^{\ve}$ be the $\ve$-geodesic connecting $\vp_{0}$ and $\vp_{1}$. Then
\[
d_{p}(\vp_{0},\vp_{1}) = \lim_{\ve\rightarrow0}\length_{p}(\vp^{\ve}).
\]
\end{thm}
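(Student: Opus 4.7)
My plan is to follow the Chen--Darvas strategy for the Mabuchi $L^p$ metrics on \K potentials, adapted to the dHYM setting by \cite{ChuCollinsLee2020} at the level $p=2$. The proof splits into verifying the metric axioms and establishing the two-sided comparison $d_p(\vp_0,\vp_1)=\lim_{\ve\to 0}\length_p(\vp^\ve)$. Symmetry, non-negativity, and the triangle inequality are immediate from reversing and concatenating paths, with a standard smoothing at the join handling the latter. Positive definiteness of $d_p$ is subtler but will follow once the $\ve$-geodesic characterization is in place: the $L^\infty$ estimate on $\vp^\ve$ already available from \cite{CollinsYau2021,ChuCollinsLee2020} will yield a lower bound for $\length_p(\vp^\ve)$ in terms of $\|\vp_1-\vp_0\|_{L^\infty}$, so that $\lim_{\ve\to 0}\length_p(\vp^\ve)=0$ forces $\vp_0=\vp_1$.

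For the upper bound, since the $\ve$-geodesic $\vp^\ve$ solves a non-degenerate elliptic perturbation of the geodesic equation, it is a smooth path in $\H$, and so $d_p(\vp_0,\vp_1)\leq \length_p(\vp^\ve)$ directly from the definition, yielding $d_p(\vp_0,\vp_1)\leq \liminf_{\ve\to 0}\length_p(\vp^\ve)$. The substantive content is the reverse inequality $d_p(\vp_0,\vp_1)\geq \limsup_{\ve\to 0}\length_p(\vp^\ve)$. To obtain it, fix a smooth competitor $\vp(t)$ joining $\vp_0$ and $\vp_1$ and let $\vp^\ve$ be the $\ve$-geodesic with the same endpoints. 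I would first show that $t\mapsto E_p(\vp^\ve,t)^{1/p}$ is almost constant up to an error vanishing with $\ve$, by differentiating in $t$ and invoking the Euler--Lagrange equation defining $\vp^\ve$; this reduces $\length_p(\vp^\ve)$ to $E_p(\vp^\ve,t_*)^{1/p}$ at any fixed time $t_*$.

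Next, I would pair the first variation of $E_p$ along $\vp^\ve$ against the family $\vp-\vp^\ve$, integrate by parts in $t$, and apply H\"older's inequality with respect to the weight $\re(e^{-\sqrt{-1}\theta_0}(\alpha_{\vp^\ve}+\sqrt{-1}\omega)^n)$ together with convexity of $\J$ along $\vp^\ve$ from \cite{CollinsYau2021}, producing an inequality of the form $\length_p(\vp)\geq \length_p(\vp^\ve)-C\ve$; sending $\ve\to 0$ then yields the lower bound. The main technical obstacle is this first-variation/integration-by-parts step. Unlike the Mabuchi setting, here the $L^p$ density itself depends on the path through $\alpha_{\vp^\ve}$, so linearizing $E_p$ produces boundary and bulk terms involving both $\dot\vp^\ve$ and the complex Hessian of $\vp^\ve$ that must be absorbed uniformly in $\ve$. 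The $p=2$ case in \cite{ChuCollinsLee2020} sidesteps many of these via pointwise identities special to $p=2$; for general $p\geq 1$ I expect to need uniform $C^1$ bounds on $\vp^\ve$ together with the convexity of $\J$ to close the argument in $L^p$ rather than only in $L^2$.
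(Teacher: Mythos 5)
Your high-level strategy---compare against the $\ve$-geodesic and show it realizes $d_p$ in the limit $\ve\to 0$---is the right one and matches the paper. The cheap direction $d_p(\vp_0,\vp_1)\leq\liminf_{\ve\to 0}\length_p(\vp^\ve)$ and the quasi-constancy of $E_p(\vp^\ve,\cdot)$ are both correct ingredients and are indeed used (Lemma~\ref{energy estimates}(i) in the paper). But there are two genuine gaps in your sketch.

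First, your route to the reverse inequality is not the one that works here, and it is not clear it can be made to work. You propose to ``pair the first variation of $E_p$ along $\vp^\ve$ against the family $\vp-\vp^\ve$'' and invoke convexity of $\J$. In the Mabuchi/$\K$ potentials setting, the Chen--Darvas comparison is \emph{not} established via convexity of the $K$-energy, and the paper likewise makes no use of convexity of $\J$ in Section~3. What the paper actually does (Lemma~\ref{triangle inequality}) is a Darvas-type argument: fix a competitor path $\psi(s)$ from $\vp_0$ to $\vp_1$, build the family of $\ve$-geodesics $\vp^\ve(s,\cdot)$ joining $\psi(s)$ to $\vp_1$, set $\ell(s)=\length_{p,\delta}(\psi|_{[0,s]})$ and $\hat\ell(s)=\length_{p,\delta}(\vp^\ve(s,\cdot))$, and show $\ell'(s)+\hat\ell'(s)\geq -C\ve^2\delta^{-3}$ by differentiating $E^\ve_\delta(s,t)$ in $s$, integrating by parts in $t$, and using the $\ve$-geodesic equation \eqref{ve geodesic eqn 3} together with the $C^{1,1}$ bounds of Theorem~\ref{ve geodesic existence and estimates}. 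The extra terms you correctly identify as involving the Hessian of $\vp^\ve$ are exactly what is absorbed by the $\ve$-geodesic equation, with the price being the $\ve^2\delta^{-3}$ error. Your proposed pairing does not obviously produce the needed cancellation, and I would not expect an inequality of the clean form $\length_p(\vp)\geq\length_p(\vp^\ve)-C\ve$ to follow from it.

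Second, your positivity argument is imprecise. An $L^\infty$ \emph{upper} bound on $\vp^\ve$ does not yield a \emph{lower} bound on $\length_p(\vp^\ve)$; the two go in opposite directions. What is actually used is the convexity-in-$t$ estimate $\vp^\ve_{tt}\geq -C\ve^2$ (Lemma~\ref{ve geodesic more estimates}(i)), which forces $\vp_t^\ve(0)\leq\vp_1-\vp_0+C\ve^2$ pointwise, and hence $|\vp_t^\ve(0)|\geq|\vp_0-\vp_1|-C\ve^2$ on $\{\vp_0>\vp_1\}$. Plugged into the energy and sent to the limit, this gives the lower bound of Lemma~\ref{H dp lemma}(iii), from which positivity of $d_p$ follows. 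Finally, a small but necessary technical point you omit: $|\cdot|^p$ is not smooth at $0$ for general $p\geq 1$, so the first-variation calculations must be carried out with the regularization $E_{p,\delta}(\vp,t)=\int_X\sqrt{\vp_t^{2p}+\delta^2}\,\Re(e^{-\sqrt{-1}\hat\theta_0}\Omega_\vp^n)$, with the $\delta$-dependent error then sent to $0$ after $\ve\to 0$; this is what produces the $\delta^{-3}$ factor above and the double limit at the end of Lemma~\ref{H dp lemma}(i).
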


Since we are working on the hypercritical phase, we can rewrite $\H$ as
\[
\H= \bigcup_{c\in(0,\theta_{0})}\H_{c},
\]
where
\[
\H_{c} = \left\{\,\vp\in\H~\big|~
\Im\left(e^{-\sqrt{-1}c}(\alpha_{\vp}+\sqrt{-1}\omega)^{n}\right)>0\,\right\}.
\]
Clearly, we have $\H_{c_{2}} \subset \H_{c_{1}}$ for $c_{1}\leq c_{2}$ and $\H=\H_0$.
%
%one can exhaust $\H$ by $\H_c,c\in (0,\theta_0)$ where
%\[
%\H_{c} = \left\{\,\vp\in\H~\big|~
%\Im\left(e^{-\sqrt{-1}c}(\alpha_{\vp}+\sqrt{-1}\omega)^{n}\right)>0\,\right\}.
%\]
%It is clear that
%\[
%\H = \H_{0} = \bigcup_{c\in(0,\theta_{0})}\H_{c}, \quad
%\H_{c_{2}} \subset \H_{c_{1}} \ \text{for $c_{1}\leq c_{2}$}.
%\]
In other words, the volume form $\Im(\alpha_{\vp}+\sqrt{-1}\omega)^{n}$ is uniformly positive on the subspace $\H_{c}\subset\H$ and its positivity increases with $c$, see \eqref{equ-H-c}.

We define the properness of $\J$ as follows.

\begin{defn}\label{definition of properness}~
\begin{enumerate}\setlength{\itemsep}{1mm}
\item[(i)] We say the $\J$-functional is weakly proper if for each $c\in(0,\theta_{0})$ there exist constants $\delta_{c},A_{c}>0$ such that for any $\vp\in\H_{c}$ with $\sup_{X}\vp=0$,
\[
\J(\vp) \geq \delta_{c}d_{1}(\vp,0)-A_{c}.
\]
\item[(ii)] We say the $\J$-functional is proper if there exist constants $\delta,A>0$ such that the following holds. For any $c\in(0,\theta_{0})$ and $\vp\in\H_{c}$ with $\sup_{X}\vp=0$,
\[
\J(\vp) \geq \delta\sin(c)d_{1}(\vp,0)-A.
\]
\end{enumerate}
\end{defn}

Let us explain the motivation of introducing the subspace $\H_{c}$ and Definition \ref{definition of properness}. Unlike the definition of properness in the K\"ahler setting, the constant $\delta_{c}$ and $\delta\sin(c)$ in Definition \ref{definition of properness} depend on $c$, which is not uniform for all $\vp\in\H$. The reason of this difference is that there is only one canonical volume form for each $\vp\in\H_{psh}$, while there are two canonical volume forms for each $\vp\in\H$.

More precisely, in the K\"ahler setting, for any $\vp\in\H_{\psh}$, $\omega_{\vp}=\omega+\ddb\vp$ is a K\"ahler metric and so the most natural volume form is $\omega_{\vp}^{n}$. However, the situation becomes more subtle in the dHYM setting. For any $\vp\in\H$, there two available volume forms which are both natural. The first one is
$\re\big(e^{-\sqrt{-1}\theta_0}(\a_\varphi+\sqrt{-1}\omega)^n \big)$, which appears in the definition of $\H$. This volume form is used to define the Riemannian structure and $d_{p}$-distance of $\H$, and so is the canonical volume form in the viewpoint of geometry. The second one is $\Im(\omega+\sqrt{-1}\alpha_{\vp})^{n}$, which appears in the definitions of functionals $\J$ and $\J_{\ve}$. Recall the $\J$-functional serves as the Kempf-Ness functional in the GIT framework. This volume form should be understood as the canonical one in the viewpoint of analysis. The quotient of these two volume forms can be computed as follows:
\[
\frac{\Im(\alpha_{\vp}+\sqrt{-1}\omega)^{n}}
{\re\big(e^{-\sqrt{-1}\theta_0}(\a_\varphi+\sqrt{-1}\omega)^n \big)}
= \frac{\sin(Q_{\omega}(\alpha_{\vp}))}{\cos(Q_{\omega}(\alpha_{\vp})-\theta_{0})}.
\]
On the other hand, roughly speaking, one equivalent characterization of the properness is that $d_{1}$-distance can be controlled by $\J$-functional. This means the positive lower bound of the above quotient should be as the constant $\delta$ in the definition of properness. Recall that $Q_{\omega}(\alpha_{\vp})\in(0,\theta_{0}+\frac{\pi}{2})$ and $\theta_{0}\in(0,\frac{\pi}{2})$. It is clear that there is no positive uniform lower bound for all $\vp\in\H$, especially when $Q_{\omega}(\alpha_{\vp})$ is close to zero. This non-uniformity is the main difference between the K\"ahler and dHYM settings, which motivates us to consider the subspace $\H_{c}=\{\vp\in C^{\infty}(X)~\big|~Q_{\omega}(\alpha_{\vp})\in(c,\Theta_{0})\}$ (this is an equivalent definition of $\H_{c}$, see \eqref{equ-H-c}) and introduce Definition \ref{definition of properness}.

With the above notions of properness and coercivity in hand, we now state our criterion of the existence of solutions to the dHYM equation in the hypercritical phase case.

\begin{thm}\label{main theorem}
Let $(X,\omega)$ be an $n$-dimensional compact K\"ahler manifold with $n\geq4$. Suppose that $\alpha$ is a real closed $(1,1)$-form on $X$ so that $[\a]$ has hypercritical phase, then the followings are equivalent:
\begin{enumerate}\setlength{\itemsep}{1mm}
\item The $\J$-functional is coercive.
\item The $\J$-functional is proper.
\item The $\J$-functional is weakly proper.
\item There exists $\vp\in C^{\infty}(X)$ solving the dHYM equation \eqref{intro-dHYM}.
\end{enumerate}
\end{thm}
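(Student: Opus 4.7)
The argument will proceed by the implication cycle $(1) \Rightarrow (2) \Rightarrow (3) \Rightarrow (4) \Rightarrow (1)$. The step $(2) \Rightarrow (3)$ is immediate upon setting $\delta_c = \delta \sin(c)$ and $A_c = A$. For $(1) \Rightarrow (2)$, the key observation is that for $\vp \in \H_c$ the phase operator satisfies $Q_\omega(\alpha_\vp) \in (c, \theta_0 + \frac{\pi}{2})$, and since $\theta_0 < \frac{\pi}{2}$ elementary trigonometry yields the uniform pointwise bound $\sin(Q_\omega(\alpha_\vp)) \geq C_{\theta_0} \sin(c)$. This gives the volume-form comparison
\[
\Im(\alpha_\vp + \sqrt{-1}\omega)^n \geq C_{\theta_0} \sin(c) \Re\bigl(e^{-\sqrt{-1}\theta_0}(\alpha_\vp + \sqrt{-1}\omega)^n\bigr).
\]
Inserting this into the coercivity inequality, with the normalisation $\sup_X \vp = 0$ absorbing the $\alpha$-term into an additive constant, and then invoking a Darvas-type identification of $d_1(\vp, 0)$ with the weighted $L^1$ mass of $\vp$---derivable from Theorem~\ref{H dp theorem} by realising $d_1$ as the limit of $\ve$-geodesic lengths---yields the properness estimate $\J(\vp) \geq \delta \sin(c) d_1(\vp, 0) - A$.

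For $(4) \Rightarrow (1)$, let $\vp_\ast$ be a solution to the dHYM equation. By Collins-Yau, $\J$ is convex along $\ve$-geodesics and $\vp_\ast$ is a critical point, so $\vp_\ast$ globally minimises $\J$ on $\H$. The plan is to upgrade this to linear growth. For $\vp \in \H$, take the $\ve$-geodesic $\vp^\ve$ from $\vp_\ast$ to $\vp$ and write
\[
\J(\vp) - \J(\vp_\ast) = \int_0^1 (1-s) \bigl(\J \circ \vp^\ve\bigr)''(s)\,ds.
\]
The second-variation formula produces a non-negative integrand whose principal part is a weighted quadratic in $\dot{\vp}_s^\ve$. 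Because $\vp_\ast$ lies in some $\H_c$ with $c > 0$, this weight is uniformly positive near $\vp_\ast$, and a mass/Jensen-type manipulation after passing $\ve \to 0$ converts the quadratic growth into a lower bound proportional to the coercive mass $\int_X (-\vp) (\Im(\alpha_\vp + \sqrt{-1}\omega)^n - \Im(\alpha + \sqrt{-1}\omega)^n)$.

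The most delicate implication is $(3) \Rightarrow (4)$. I plan to run a continuity method deforming through hypercritical dHYM-type equations, for instance
\[
\Im\bigl(e^{-\sqrt{-1}\theta_0}(\alpha_{\vp_t} + \sqrt{-1}\omega)^n\bigr) = (1-t) f_0,
\]
where $f_0$ is the dHYM residual at the reference $\vp = 0$. Openness follows from the implicit function theorem applied to the linearisation, which is uniformly elliptic in the hypercritical range. Closedness reduces to a uniform $C^0$ bound along the path. My strategy is threefold: first show that the continuity path stays in $\H_{c_0}$ for a uniform $c_0 > 0$ (exploiting the hypercritical constraint $\theta_0 < \frac{\pi}{2}$ and the explicit shape of the path); next, apply weak properness to bound $d_1(\vp_t, 0)$ uniformly in $t$; finally, convert this $d_1$-bound into a $C^0$-bound via pluripotential-theoretic estimates, with the dimensional hypothesis $n \geq 4$ entering through the requisite Sobolev-type inequalities. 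Higher-order estimates in the hypercritical regime are standard. The heart of the argument---and the main obstacle I anticipate---is the $d_1 \rightarrow C^0$ conversion together with maintaining the uniform lower phase bound $c_0$ along the continuity path.
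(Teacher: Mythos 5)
Your implications $(1)\Rightarrow(2)\Rightarrow(3)$ match the paper's argument in substance: the volume-form ratio bound $\sin(Q_\omega(\alpha_\vp)) \geq \sin(c)$ on $\H_c$, combined with Lemma~\ref{dp distance inequality 1} (the upper bound $d_1(\vp,0) \leq \int_X |\vp|\,\Re(e^{-\sqrt{-1}\theta_0}(\alpha_\vp+\sqrt{-1}\omega)^n)$) and the uniform $L^1(\chi^n)$ bound for normalised $\chi$-psh functions, is exactly the mechanism used. The $(2)\Rightarrow(3)$ step is indeed trivial.

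For $(4)\Rightarrow(1)$, your plan takes a different route from the paper and has a gap. You propose to start from convexity of $\J$ along $\ve$-geodesics and upgrade the global minimality of $\vp_\ast$ to a linear lower bound via a second-variation estimate. But convexity only gives $\J(\vp) \geq \J(\vp_\ast)$: there is no uniform strict convexity because the second variation degenerates as $\vp$ moves toward the boundary of $\H$ (where $Q_\omega(\alpha_\vp) \to 0$), precisely where the coercive mass $\int_X(-\vp)\Im(\alpha_\vp+\sqrt{-1}\omega)^n$ can blow up. The paper instead runs the \emph{twisted} dHYM flow from an arbitrary $\vp\in\H$ (Theorem~\ref{smooth convergence}, whose $\ve_0$ depends only on the solution $\hat\vp_0$, not on $\vp$), uses the monotonicity of $\J_\ve$ along the flow to get $\J_\ve(\vp)\geq \J_\ve(\hat\vp_\ve) \geq -C$, and then uses the explicit $\ve$-difference $\J_0 - \J_\ve$ (Lemma~\ref{coerciveness lemma}) to produce exactly the coercive mass. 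It is the artificial $\ve$-term, not the second variation at $\vp_\ast$, that yields linear growth. Without introducing a twist of this kind, the convexity argument you sketch does not close.

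The implication $(3)\Rightarrow(4)$ is where your plan diverges most seriously, and the proposed continuity method will not close. You acknowledge that the crux is converting a $d_1$-bound (obtained from weak properness) into a $C^0$-bound along the continuity path. But in general a $d_1$-bound does \emph{not} imply a $C^0$-bound for quasi-psh functions, and no Sobolev inequality repairs this. The paper deliberately avoids the $C^0$ estimate: it runs the twisted dHYM flow from $0$, uses weak properness only to bound $\J_\ve$ below (Lemma~\ref{J ve functional lower bound}), extracts an $L^1$-limit $u_\infty\in\mathcal{E}^1(X,\chi)$ with zero Lelong number, and then — crucially — builds a smooth \emph{subsolution} $\ti\vp$ by local convolution of $u_\infty$ (Lemma~\ref{Q P local regularization}) and a regularized-maximum gluing (Lemma~\ref{gluing lemma}), invoking Collins--Jacob--Yau to pass from subsolution to solution. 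The function $\ti\vp$ is not a limit of the flow and no $C^0$ estimate on the $u_k$ is ever needed. Also, your stated use of the hypothesis $n\geq 4$ for ``Sobolev-type inequalities'' is off: $n\geq 4$ enters only through the concavity of the twisted operator $F_\ve$ (Lemma~\ref{F properties}), which is needed for the second-order estimates along the flow and for the Jensen-type inequality in the local regularization step.
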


\begin{rem}
It is interesting to see whether the above result holds when $n\leq3$. The main obstruction is that the proper twisted dHYM operator $F_{\ve}$ may be not concave, see Lemma \ref{F properties}. %{\red (I am not sure if remark is suitable here. I leave this to you. If you do not like this remark, please feel free to delete it.)}
\end{rem}

\bigskip

The paper is organized as follows: In Section~\ref{Section-Preliminaries}, we will collect some preliminaries of the operators that will be used throughout this work. In Section~\ref{Section-metricspace}, we will study the $d_p$-distance using the $\ve$-geodesic. In particular, we will prove Theorem~\ref{H dp theorem}. In Section~\ref{Section:dHYM flow}, we will study the twisted dHYM flow which is the negative gradient flow of the twisted $\mathcal{J}_{\ve}$-functional. We will study its long-time existence and convergence. In Section~\ref{Section: Technical lemmas}, we will establish some technical lemmas which are important ingredients for the proof of main theorem. In Section \ref{Section: main result 1} and \ref{Section: main result 2}, we will prove our main result, Theorem~\ref{main theorem}.

\section{Preliminaries}\label{Section-Preliminaries}
In this section, we will introduce some basic operators and properties which we will use in this work.

\subsection{Basic operators and properties}
For $\lambda=(\lambda_{1},\ldots,\lambda_{n})\in\mathbb{R}^n$ and $0<\theta<\Theta<\pi$, we define
\[
P(\lambda)=\max_{1\leq k\leq n}\sum_{i\neq k} \arccot(\lambda_i), \;\; Q(\lambda)=\sum_{i=1}^n \arccot(\lambda_i)
\]
and
\[
\Gamma_{\theta,\Theta}=\{ \lambda\in \mathbb{R}^n ~|~P(\lambda)<\theta, \; Q(\lambda)<\Theta\}.
\]
It is clear that the operators $P$ and $Q$ are independent of permutation. For a positive definite $n\times n$ Hermitian matrix $A$ and $n\times n$ Hermitian matrix $B$, we will define
\begin{equation}\label{def-PQ}
P_A(B)=P(\lambda),\;\ Q_A(B)=Q(\lambda)
\end{equation}
where $\lambda$ is the eigenvalue of $B$ with respect to $A$ and
$$\Gamma_{A,\theta,\Theta}=\{B\in  \mathrm{Herm}(n)~|~ P_A(B)<\theta,\; Q_A(B)<\Theta\}.$$

We have the following useful properties of the above operators.
\begin{lma}
For any positive definite matrix $A$ and $0<\theta<\Theta<\pi$,
\begin{enumerate}\setlength{\itemsep}{1mm}
\item[(i)] The space $\ov{\Gamma}_{A,\theta,\Theta}$ is convex;
\item[(ii)] The function $\cot(Q_{A})$ is concave on $\ov{\Gamma}_{A,\theta,\Theta}$.
\end{enumerate}
\end{lma}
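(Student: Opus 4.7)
The plan is to perform two reductions and then close with a direct Hessian computation.

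First, since the generalized eigenvalues of $B$ with respect to $A$ coincide with the ordinary eigenvalues of $\widetilde{B}:=A^{-1/2}BA^{-1/2}$ and $B\mapsto\widetilde{B}$ is an invertible linear endomorphism of $\mathrm{Herm}(n)$, both claims reduce to the case $A=I$. Since $P$ and $Q$ are permutation-symmetric functions of the eigenvalues, the standard spectral-function principle (a symmetric convex set, resp.\ symmetric concave function, on $\mathbb{R}^n$ lifts to a convex set, resp.\ concave function, on $\mathrm{Herm}(n)$ via $B\mapsto\lambda(B)$) converts the problem to an assertion on $\mathbb{R}^n$: that $\overline{\Gamma}_{\theta,\Theta}\subset\mathbb{R}^n$ is convex and that $\cot Q(\lambda)$ is concave on it. To further reduce (i) to (ii), set $Q_k(\lambda):=\sum_{i\neq k}\arccot\lambda_i$, so that $P=\max_k Q_k$ and
\[
\Gamma_{\theta,\Theta}=\{Q<\Theta\}\cap\bigcap_{k=1}^n\{Q_k<\theta\}.
\]
Since $\cot$ is strictly decreasing on $(0,\pi)$, each factor on the right is a super-level set of the corresponding $\cot$-function and is convex as soon as that function is concave. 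I would in fact aim for the slightly stronger form of (ii)---that $\cot Q$ is concave on all of $\{Q<\pi\}$---since this set is itself convex and contains every $\overline{\Gamma}_{\theta,\Theta}$ of interest; (i) then follows from the $n$- and $(n-1)$-variable instances of this strengthened (ii).

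For the Hessian computation, a direct calculation yields
\[
\partial_i\partial_j(\cot Q)=\csc^2 Q\left(\frac{2\cot Q}{(1+\lambda_i^2)(1+\lambda_j^2)}-\delta_{ij}\frac{2\lambda_i}{(1+\lambda_i^2)^2}\right).
\]
Testing against $v\in\mathbb{R}^n$ and substituting $w_i:=v_i/(1+\lambda_i^2)$, concavity becomes
\[
\cot Q\cdot\Bigl(\sum_i w_i\Bigr)^2\;\le\;\sum_i\lambda_i w_i^2,
\]
equivalently, positive semi-definiteness of $M:=\mathrm{diag}(\lambda)-(\cot Q)\,\mathbf{1}\mathbf{1}^T$. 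When $\cot Q>0$ one has $Q<\pi/2$, which forces every $\arccot\lambda_i<\pi/2$ and hence every $\lambda_i>0$; an induction via the tangent addition formula then gives $\tan Q\ge\sum_i 1/\lambda_i$, after which Cauchy--Schwarz closes the inequality. The main technical obstacle is the case $\cot Q\le 0$, i.e.\ $Q\in[\pi/2,\pi)$, where some $\lambda_i$ may be very negative but the constraint $Q<\pi$ still rigidly restricts the sign pattern; the $M\succeq 0$ check then amounts to analyzing the secular equation of a rank-one update of $\mathrm{diag}(\lambda)$, and I expect the bookkeeping required to verify this under only the hypothesis $Q<\pi$ to be the most delicate step.
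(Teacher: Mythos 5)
The paper's proof is a bare citation of Chen's Lemma 5.6, so your self-contained attempt is a genuinely different route. Your reductions (to $A=I$ via $B\mapsto A^{-1/2}BA^{-1/2}$; to $\mathbb{R}^n$ via the symmetric spectral-function theorem; of (i) to (ii) via super-level sets of $\cot Q$ and $\cot Q_k$) are correct, your Hessian formula checks out, and the Cauchy--Schwarz argument in the regime $\cot Q>0$ is complete.

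The genuine gap is the case $\cot Q\le 0$, which you explicitly leave open (``I expect the bookkeeping\ldots to be the most delicate step''). This is not a peripheral case: in the hypercritical setting $\Theta=\Theta_0=\theta_0+\tfrac{\pi}{2}>\tfrac{\pi}{2}$, so $\ov\Gamma_{A,\theta,\Theta}$ contains a large region where $Q\in(\pi/2,\Theta_0)$ and hence $\cot Q<0$, and a proposal that only conjectures this case tractable is incomplete. In fact your setup does close it: the constraint $Q<\pi$ forces at most one $\lambda_i<0$ (two $\arccot$ values in $(\pi/2,\pi)$ already sum past $\pi$), so either all $\lambda_i\ge 0$ and $M=\mathrm{diag}(\lambda)-(\cot Q)\,\mathbf{1}\mathbf{1}^T$ is a sum of two PSD matrices, or exactly $\lambda_n<0$; in the latter case write $a=\arccot\lambda_n\in(\pi/2,\pi)$, $b=Q-a\in(0,\pi/2)$, invoke your positive-case bound $\tan b\ge\sum_{i<n}1/\lambda_i$, and the addition formula (here $1-\tan a\tan b>1$ while $\tan a+\tan b<0$) yields $\tan Q>\sum_i 1/\lambda_i$, which via the secular equation is precisely the condition that the smallest eigenvalue of $M$ be nonnegative. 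A second, smaller omission: you assert without proof that $\{Q<\pi\}$ is convex; this underpins both the reduction of (i) to (ii) and the passage from the pointwise Hessian bound to genuine concavity. It does follow from your own computation---restricted to $\ker\nabla Q$ your inequality gives $\mathrm{Hess}\,Q\succeq 0$ on level-set tangent spaces, whence $\{Q\le\Theta\}$ is convex for every $\Theta<\pi$---but the step needs to be stated rather than taken for granted.
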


\begin{proof}
(i) is proved in \cite[Lemma 5.6 (10)]{Chen2021}. (ii) is a special case of \cite[Lemma 5.6 (9)]{Chen2021} by choosing $f=0$. The difference between $\cot(Q_{A})$ and $F$ in \cite[Lemma 5.6]{Chen2021} is a constant term $\cot(\theta_{0})$, which does not affect the concavity.
\end{proof}

It is sometimes useful to consider
\begin{equation}\label{hat Q}
\hat Q_A(B)=\frac{n\pi}{2}-Q_A(B)=\sum_{i=1}^n \arctan(\lambda_i)
\end{equation}
and $\hat \theta_0=\frac{n\pi}{2}-\theta_0$ where $\hat\theta_0\in \left(\frac{(n-1)\pi}{2},\frac{n\pi}{2}\right)$ since $\theta_0\in (0,\frac{\pi}{2})$.

We have the following equivalent forms of some expressions.
\begin{lma}\label{Re Im relationships}
For any $\vp\in\H$,
\[
\Re\left(e^{-\sqrt{-1}\hat{\theta}_{0}}(\omega+\sqrt{-1}\alpha_{\vp})^{n}\right)
= \Re\left(e^{-\sqrt{-1}\theta_{0}}(\alpha_{\vp}+\sqrt{-1}\omega)^{n}\right),
\]
\[
\Im\left(e^{-\sqrt{-1}\hat{\theta}_{0}}(\omega+\sqrt{-1}\alpha_{\vp})^{n}\right)
= -\Im\left(e^{-\sqrt{-1}\theta_{0}}(\alpha_{\vp}+\sqrt{-1}\omega)^{n}\right).
\]
\end{lma}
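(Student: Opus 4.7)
The plan is to prove the lemma by a direct algebraic manipulation, observing that the two $n$-forms appearing in the statement are in fact complex conjugates of one another. The key identity is that
\[
\omega+\sqrt{-1}\alpha_{\vp} \;=\; \sqrt{-1}\bigl(\alpha_{\vp}-\sqrt{-1}\omega\bigr),
\]
which is obtained simply by factoring out $\sqrt{-1}$ (using $1/\sqrt{-1}=-\sqrt{-1}$). Raising both sides to the $n$-th power gives
\[
(\omega+\sqrt{-1}\alpha_{\vp})^{n} \;=\; (\sqrt{-1})^{n}(\alpha_{\vp}-\sqrt{-1}\omega)^{n}.
\]

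Next, because $\alpha_{\vp}$ and $\omega$ are real $(1,1)$-forms (so each equals its own complex conjugate under the standard conjugation $\overline{dz^{i}}=d\bar z^{i}$), the form $\alpha_{\vp}-\sqrt{-1}\omega$ is precisely the complex conjugate of $\alpha_{\vp}+\sqrt{-1}\omega$. Consequently
\[
(\alpha_{\vp}-\sqrt{-1}\omega)^{n} \;=\; \overline{(\alpha_{\vp}+\sqrt{-1}\omega)^{n}}.
\]
Combining this with $(\sqrt{-1})^{n}=e^{\sqrt{-1}n\pi/2}$ and the definition $\hat\theta_{0}=\frac{n\pi}{2}-\theta_{0}$, I would compute
\[
e^{-\sqrt{-1}\hat\theta_{0}}(\omega+\sqrt{-1}\alpha_{\vp})^{n}
=e^{-\sqrt{-1}(n\pi/2-\theta_{0})}e^{\sqrt{-1}n\pi/2}\overline{(\alpha_{\vp}+\sqrt{-1}\omega)^{n}}
=\overline{\,e^{-\sqrt{-1}\theta_{0}}(\alpha_{\vp}+\sqrt{-1}\omega)^{n}\,}.
\]

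Finally, taking real and imaginary parts of both sides yields the two claimed identities, since $\Re(\bar\beta)=\Re(\beta)$ and $\Im(\bar\beta)=-\Im(\beta)$ for any complex-valued form $\beta$. There is essentially no obstacle here; the only care needed is bookkeeping with the factors of $\sqrt{-1}$ and with the meaning of complex conjugation on forms, which can be double-checked at a point by diagonalising $\omega$ and $\alpha_{\vp}$ simultaneously (writing $\lambda_{i}+\sqrt{-1}=\csc(\arccot\lambda_{i})e^{\sqrt{-1}\arccot\lambda_{i}}$ and $1+\sqrt{-1}\lambda_{i}=\sec(\arctan\lambda_{i})e^{\sqrt{-1}\arctan\lambda_{i}}$) to confirm that both top-forms reduce to the same positive real volume form multiplied by $e^{\pm\sqrt{-1}(Q_{\omega}(\alpha_{\vp})-\theta_{0})}$.
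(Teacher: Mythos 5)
Your proof is correct and takes a genuinely different route from the paper's. The paper's proof diagonalizes $\alpha_{\vp}$ with respect to $\omega$, writes both sides as $\prod_i\sqrt{1+\lambda_i^2}\cdot\omega^n$ times $\cos(\hat Q_\omega(\alpha_\vp)-\hat\theta_0)$ and $\cos(Q_\omega(\alpha_\vp)-\theta_0)$ respectively, and then uses $\hat Q_\omega(\alpha_\vp)-\hat\theta_0=-(Q_\omega(\alpha_\vp)-\theta_0)$ together with the parity of cosine (and for the second identity, sine). You instead prove the single global identity
\[
e^{-\sqrt{-1}\hat\theta_0}(\omega+\sqrt{-1}\alpha_\vp)^n
=\overline{e^{-\sqrt{-1}\theta_0}(\alpha_\vp+\sqrt{-1}\omega)^n},
\]
by factoring $\omega+\sqrt{-1}\alpha_\vp=\sqrt{-1}(\alpha_\vp-\sqrt{-1}\omega)$ and recognizing $\alpha_\vp-\sqrt{-1}\omega$ as the conjugate of $\alpha_\vp+\sqrt{-1}\omega$ (valid since $\alpha_\vp$ and $\omega$ are real $(1,1)$-forms and $\omega^n$ is a real volume form, so conjugation acts on the scalar density). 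Both identities then drop out at once from $\Re\bar z=\Re z$, $\Im\bar z=-\Im z$. Your argument is coordinate-free and makes transparent that the two top forms are literally complex conjugates, whereas the paper's computation is a local eigenvalue check; your approach is slightly cleaner and proves both identities simultaneously rather than invoking ``the similar argument'' for the second. The pointwise verification you sketch at the end (via $\lambda_i+\sqrt{-1}=\csc(\arccot\lambda_i)e^{\sqrt{-1}\arccot\lambda_i}$) essentially reproduces the paper's proof as a sanity check.
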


\begin{proof}
Let $\lambda_{i}$ be the eigenvalues of $\alpha_{\vp}$ with respect to $\omega$. Then
\begin{equation}\label{Re Im relationships eqn 1}
\begin{split}
 \frac{\Re\left(e^{-\sqrt{-1}\hat{\theta}_{0}}(\omega+\sqrt{-1}\alpha_{\vp})^{n}\right)}{\omega^{n}}
= &
\cos(\hat{Q}_{\omega}(\alpha_{\vp})-\hat{\theta}_{0})\prod_{i=1}^{n}\sqrt{1+\lambda_{i}^{2}} \\
= &
\cos(Q_{\omega}(\alpha_{\vp})-\theta_{0})\prod_{i=1}^{n}\sqrt{1+\lambda_{i}^{2}}\\
= & \frac{\Re\left(e^{-\sqrt{-1}\theta_{0}}(\alpha_{\vp}+\sqrt{-1}\omega)^{n}\right)}{\omega^{n}}.
\end{split}
\end{equation}
The second equality can be proved by the similar argument.
\end{proof}

If we denote $\Theta_0=\theta_0+\frac{\pi}{2}$, then using Lemma \ref{Re Im relationships} and \eqref{Re Im relationships eqn 1}, we have several equivalent descriptions of $\H$:
\begin{equation}\label{equivalent descriptions of H}
\begin{split}
\H = {} & \left\{\,\vp\in C^{\infty}(X)~\big|~
\Re\left(e^{-\sqrt{-1}\theta_{0}}(\alpha_{\vp}+\sqrt{-1}\omega)^{n}\right)>0\,\right\} \\
= {} & \left\{\,\vp\in C^{\infty}(X)~\big|~
\Re\left(e^{-\sqrt{-1}\hat{\theta}_{0}}(\omega+\sqrt{-1}\alpha_{\vp})^{n}\right)>0\,\right\} \\
= {} & \left\{\,\vp\in C^{\infty}(X)~\big|~\hat{Q}_{\omega}(\alpha_{\vp})
\in\left(\hat{\theta}_{0}-\frac{\pi}{2},\hat{\theta}_{0}+\frac{\pi}{2}\right)\,\right\} \\
= {} & \left\{\,\vp\in C^{\infty}(X)~\big|~Q_{\omega}(\alpha_{\vp})
\in\left(\theta_{0}-\frac{\pi}{2},\theta_{0}+\frac{\pi}{2}\right)\,\right\} \\[1mm]
= {} & \left\{\,\vp\in C^{\infty}(X)~\big|~Q_{\omega}(\alpha_{\vp})
\in\left(0,\Theta_{0}\right)\,\right\}.
\end{split}
\end{equation}
Likewise, for $c\in(0,\Theta_{0})$, we can define the perturbed version of space $\H$:
\begin{equation}\label{equ-H-c}
\begin{split}
\H_{c} = {} & \left\{\,\vp\in\H~\big|~
\Im\left(e^{-\sqrt{-1}c}(\alpha_{\vp}+\sqrt{-1}\omega)^{n}\right)>0\,\right\} \\
= {} & \left\{\,\vp\in C^{\infty}(X)~\big|~
Q_{\omega}(\alpha_{\vp})\in\left(c,\theta_{0}+\frac{\pi}{2}\right)\,\right\} \\[1mm]
= {} & \left\{\,\vp\in C^{\infty}(X)~\big|~Q_{\omega}(\alpha_{\vp})\in(c,\Theta_{0})\,\right\}.
\end{split}
\end{equation}
For any $\vp\in\H_{c}$, let $\lambda_{i}$ be the eigenvalues of $\alpha_{\vp}$ with respect to $\omega$. Then
\begin{equation}\label{H c positivity}
\frac{\Im(\omega+\sqrt{-1}\alpha_{\vp})^{n}}{\omega^{n}}
= \sin(Q_{\omega}(\alpha_{\vp}))\prod_{i=1}^{n}\sqrt{1+\lambda_{i}^{2}}
\geq \sin(c)\prod_{i=1}^{n}\sqrt{1+\lambda_{i}^{2}}.
\end{equation}
Hence, the constant $c$ is measuring the positivity of the volume form $\Im(\omega+\sqrt{-1}\alpha_{\vp})^{n}$.

\bigskip

Finally, we note that any element of $\H$ is quasi-plurisubharmonic.
\begin{lma}\label{quasi-plurisubharmonic}
Define $\chi=\tan(\theta_{0})\omega+\alpha$. Then for any $\vp\in\H$,
\[
\chi_{\vp} = \chi+\ddb\vp > 0,
\]
or equivalently, $\vp\in\PSH(X,\chi)$.
\end{lma}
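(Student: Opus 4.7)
The plan is to diagonalize with respect to $\omega$ and translate the condition $\varphi\in\mathcal{H}$ into a pointwise bound on each individual eigenvalue of $\alpha_\varphi$. At any point of $X$, let $\lambda_1,\ldots,\lambda_n$ be the eigenvalues of $\alpha_\varphi$ with respect to $\omega$. Then the eigenvalues of $\chi_\varphi=\tan(\theta_0)\omega+\alpha_\varphi$ with respect to $\omega$ are $\tan(\theta_0)+\lambda_i$, so the claim reduces to showing that
\[
\lambda_i > -\tan(\theta_0) \qquad \text{for each } i=1,\ldots,n.
\]

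For this, I would use the last equivalent description of $\mathcal{H}$ in \eqref{equivalent descriptions of H}, namely
\[
Q_\omega(\alpha_\varphi)=\sum_{i=1}^n \mathrm{arccot}(\lambda_i) \in (0,\Theta_0),
\]
with $\Theta_0=\theta_0+\tfrac{\pi}{2}$. Since $\mathrm{arccot}$ takes values in $(0,\pi)$, every summand $\mathrm{arccot}(\lambda_i)$ is strictly positive, so for any fixed index $j$,
\[
\mathrm{arccot}(\lambda_j) \;=\; Q_\omega(\alpha_\varphi)-\sum_{i\neq j}\mathrm{arccot}(\lambda_i) \;<\; Q_\omega(\alpha_\varphi) \;<\; \theta_0+\tfrac{\pi}{2}.
\]
Since $\theta_0\in(0,\tfrac{\pi}{2})$ implies $\theta_0+\tfrac{\pi}{2}\in(\tfrac{\pi}{2},\pi)$, we may apply the (decreasing) function $\cot$ on $(0,\pi)$ to both sides and obtain
\[
\lambda_j > \cot\!\left(\theta_0+\tfrac{\pi}{2}\right) = -\tan(\theta_0),
\]
which is exactly what we need.

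There is no real obstacle here: the whole argument is the observation that the hypercritical constraint $Q_\omega(\alpha_\varphi)<\theta_0+\tfrac{\pi}{2}$, together with positivity of each $\mathrm{arccot}(\lambda_i)$, forces each individual $\mathrm{arccot}(\lambda_j)$ into $(0,\theta_0+\tfrac{\pi}{2})$, and hence each $\lambda_j$ into $(-\tan\theta_0,\infty)$. This gives $\chi_\varphi>0$ and therefore $\varphi\in\mathrm{PSH}(X,\chi)$.
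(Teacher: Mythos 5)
Your proof is correct and follows essentially the same route as the paper's: diagonalize $\alpha_\vp$ against $\omega$, note that positivity of each $\arccot(\lambda_i)$ and the constraint $Q_\omega(\alpha_\vp)<\Theta_0=\theta_0+\tfrac{\pi}{2}$ bound each individual $\arccot(\lambda_j)$ by $\Theta_0$, and then apply $\cot$ to get $\lambda_j>-\tan(\theta_0)$. The only cosmetic difference is that the paper orders the eigenvalues and bounds only $\arccot(\lambda_n)$ (the largest summand), while you bound each summand; both are the same observation.
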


\begin{proof}
Let $\lambda_{1}\geq\ldots\geq\lambda_{n}$ be the eigenvalues of $\alpha_{\vp}$ with respect to $\omega$. Then
\[
\arccot(\lambda_{n}) \leq Q_{\omega}(\alpha_{\vp}) < \Theta_{0} = \theta_{0}+\frac{\pi}{2},
\]
which implies $\lambda_{n}>-\tan(\theta_{0})$ and so $\alpha_{\vp}>-\tan(\theta_{0})\omega$. Hence,
\[
\chi_{\vp} = \chi+\ddb\vp = \tan(\theta_{0})\omega+\alpha_{\vp} > 0.
\]
\end{proof}

\subsection{Twisted operator}
In this subsection, we will introduce the twisted dHYM operator. For $0<\theta<\Theta<\pi$ and $\lambda\in \Gamma_{\theta,\Theta}$, consider $F_{\ve} : \Gamma_{\theta,\Theta}\rightarrow \mathbb{R}$ which is given by
\[
F_\ve (\lambda)=\frac{\re\prod_{k=1}^n\left(\lambda_k+\sqrt{-1} \right)+\ve}{\im\prod_{k=1}^n\left(\lambda_k+\sqrt{-1} \right)}.
\]

The following basic properties of $F_\ve$ are very important in the upcoming discussion.
\begin{lma}\label{F properties}
For $0<\theta<\Theta<\pi$, there exists $\ve_{0}(\theta,\Theta,n)$ such that if $n\geq4$ and $\ve\in(0,\ve_{0})$, then the map $F_\ve$ satisfies
\begin{enumerate}\setlength{\itemsep}{1mm}
\item[(i)] For each $i$, $\frac{\de F_{\ve}}{\de\lambda_{i}}>0$;
\item[(ii)] If $\lambda_{i}\geq\lambda_{j}$, then
$\frac{\de F_{\ve}}{\de\lambda_{i}}\leq \frac{\de F_{\ve}}{\de\lambda_{j}}$;
\item[(iii)] $F_{\ve}$ is concave.
\end{enumerate}
\end{lma}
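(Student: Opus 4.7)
The plan is to factor $F_\ve$ into a known concave piece plus an $\ve$-small perturbation, and verify each property in turn. Writing $\lambda_k+\sqrt{-1} = r_k e^{\sqrt{-1}\theta_k}$ with $r_k = \sqrt{1+\lambda_k^2}\geq 1$ and $\theta_k = \arccot(\lambda_k)$, one has $\prod_k(\lambda_k+\sqrt{-1}) = Re^{\sqrt{-1}Q}$ where $R = \prod_k r_k$ and $Q = Q(\lambda)$. Since $Q\in(0,\Theta)\subset(0,\pi)$ on $\Gamma_{\theta,\Theta}$, $\sin Q > 0$ and
$$F_\ve(\lambda) = \cot Q(\lambda) + \ve\,G(\lambda), \qquad G(\lambda) := \frac{1}{R(\lambda)\sin Q(\lambda)}.$$
At $\ve=0$ the properties are already in hand: (iii) is the earlier lemma (with $f=0$), and a direct computation gives $\partial_i F_0 = 1/((1+\lambda_i^2)\sin^2 Q)>0$, yielding (i).

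For (i) with $\ve>0$, compute $\partial_i G = (\cot Q - \lambda_i)/(r_i^2 R\sin Q)$, so
$$\partial_i F_\ve = \frac{1}{r_i^2\sin^2 Q}\left(1 + \frac{\ve(\cos Q - \lambda_i\sin Q)}{R}\right).$$
Since $R\geq r_i\geq|\lambda_i|$ when $|\lambda_i|\geq 1$ and $R\geq 1$ always, the factor $|\cos Q - \lambda_i\sin Q|/R$ is uniformly bounded on $\Gamma_{\theta,\Theta}$ by a constant depending only on $\theta$ and $\Theta$. Hence (i) holds once $\ve$ is sufficiently small.

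The main work is (iii). Write $D^2 F_\ve = D^2\cot Q + \ve\,D^2 G$. The first piece is $\leq 0$; the task is to show the perturbation $\ve D^2 G$ does not destroy this. My plan is to compute $D^2 G$ explicitly in terms of the first and second partials of $Q$ and $R$, and verify that, in a weighted operator norm adapted to the degeneracy directions of $D^2\cot Q$, $\|D^2 G\|$ is uniformly bounded on $\Gamma_{\theta,\Theta}$ while $\cot Q$ retains a definite strict concavity away from the ``radial'' direction along which all eigenvalues scale together. The dimension hypothesis $n\geq 4$ should enter to ensure the combinatorial constants in the explicit Hessian of $\cot Q$ are large enough to absorb the perturbation; in lower dimensions this fails, consistent with the obstruction flagged in the subsequent remark.

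Finally, (ii) is a formal consequence of (iii) together with the permutation symmetry of $F_\ve$: fixing all coordinates other than $\lambda_i,\lambda_j$ and parametrizing $\lambda_i\mapsto m+\tilde t$, $\lambda_j\mapsto m-\tilde t$ about $m = (\lambda_i+\lambda_j)/2$, the resulting function of $\tilde t$ is concave and (by swapping $i\leftrightarrow j$) even, so its derivative is nonpositive for $\tilde t\geq 0$; evaluating at $\tilde t = (\lambda_i-\lambda_j)/2$ and invoking $\lambda_i\geq\lambda_j$ yields $\partial_i F_\ve\leq\partial_j F_\ve$. The main obstacle throughout is the uniform-in-$\lambda$ estimate in the concavity step: both $D^2\cot Q$ and $D^2 G$ degenerate at infinity in $\Gamma_{\theta,\Theta}$, and matching their rates precisely is what forces the hypothesis $n\geq 4$.
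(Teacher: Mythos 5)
The paper itself does not prove the lemma directly: it simply observes that $F_\ve$ differs from the operator $F$ of \cite[Lemma 5.6]{Chen2021} (with $f=-\ve$) by the additive constant $\cot(\theta_0)$, and cites Chen's lemma for all three properties. Your proposal attempts a self-contained argument, so the comparison is between your sketch and the hard analysis in Chen's paper.

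Parts (i) and (ii) of your proposal are sound. For (i), the decomposition $F_\ve = \cot Q + \ve G$ with $G = (R\sin Q)^{-1}$ and the computation
\[
\partial_i F_\ve = \frac{1}{(1+\lambda_i^2)\sin^2 Q}\left(1 + \ve\,\frac{\cos Q - \lambda_i\sin Q}{R}\right)
\]
are correct, and the bound $|\cos Q - \lambda_i\sin Q| \le \sqrt{1+\lambda_i^2} \le R$ gives positivity for any $\ve<1$; this is essentially the same computation that appears later in the paper at \eqref{Fii-bound}. For (ii), the derivation from (iii) via permutation symmetry, convexity of $\overline{\Gamma}_{\theta,\Theta}$, and the evenness of the restriction to the line $\lambda_i = m+\tilde t$, $\lambda_j = m-\tilde t$ is a clean, correct, and rather elegant reduction (note that one should explicitly invoke convexity and permutation-invariance of the domain to justify that the whole segment stays in $\overline{\Gamma}_{\theta,\Theta}$).

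However, part (iii) — which is the substantive content of the lemma, and the only place where the hypothesis $n\ge 4$ can possibly enter — is not proved. You write out a \emph{plan} (``compute $D^2G$ explicitly \dots\ and verify that \dots'') rather than the estimate itself. The difficulty is precisely the one you flag at the end: $D^2\cot Q$ is only negative semidefinite on $\Gamma_{\theta,\Theta}$, and it degenerates, so one cannot simply bound $\|\ve\,D^2 G\|$ by a small constant; one needs to show that the null (or nearly-null) directions of $D^2\cot Q$ are also directions in which $D^2 G$ is controlled or sign-favorable, and this requires an explicit expansion of both Hessians together with the constraint $P(\lambda)<\theta$, not just $Q(\lambda)<\Theta$. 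Your sketch never identifies the degenerate directions, never produces the comparison, and — crucially — never exhibits where the combinatorics force $n\ge 4$. Asserting that ``the dimension hypothesis should enter to ensure the combinatorial constants are large enough'' is a conjecture about what the proof will look like, not an argument. As written, (iii) is a gap, and since your proof of (ii) is conditional on (iii), that gap propagates. The actual work needed here is exactly the content of \cite[Lemma 5.6]{Chen2021}, which the paper (reasonably) cites rather than reproduces.
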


\begin{proof}
This lemma is a special case of \cite[Lemma 5.6]{Chen2021} by choosing $f=-\ve$. The difference between $F_{\ve}$ and $F$ of \cite[Lemma 5.6]{Chen2021} is a constant term $\cot(\theta_{0})$, which does not affect the derivative and concavity.
\end{proof}

For any $\ve\in[0,1]$ and $\vp\in\H$, let $\lambda_{i}$ be the eigenvalues of $\alpha_{\vp}$ with respect to $\omega$. We define the operator $F_{\omega,\ve}$ by $F_{\omega,\ve}(\alpha_{\vp})=F_{\ve}(\lambda)$, or equivalently,
\[
F_{\omega,\ve}(\alpha_{\vp}) = \frac{\Re(\alpha_{\vp}+\sqrt{-1}\omega)^{n}+\ve\omega^{n}}{\Im(\alpha_{\vp}+\sqrt{-1}\omega)^{n}}.
\]
It is clear from the definition that $F_{\omega,\ve}$ is independent of permutation of $\lambda$. Denote the first and second derivatives of $F_{\omega,\ve}$ by $F_{\omega,\ve}^{i\ov{j}}$ and $F_{\omega,\ve}^{i\ov{j},p\ov{q}}$. For any $x_{0}\in X$, if we choose a coordinate such that at $x_0$,
\[
\omega_{i\ov{j}} = \delta_{ij}, \ \
(\alpha_{\vp})_{i\ov{j}} = \lambda_{i}\delta_{ij},
\]
then we have (see e.g. \cite{Andrews1994,Gerhardt1996,Spruck2005})
\begin{equation}\label{derivatives of F}
F_{\omega,\ve}^{i\ov{j}} = \frac{\de F_{\ve}}{\de\lambda_{i}}\delta_{ij}, \quad
F_{\omega,\ve}^{i\ov{j},p\ov{q}}
= \frac{\de^{2}F_{\ve}}{\de\lambda_{i}\de\lambda_{p}}\delta_{ij}\delta_{pq}
+\frac{\frac{\de F_{\ve}}{\de\lambda_{i}}-\frac{\de F_{\ve}}{\de\lambda_{j}}}{\lambda_{i}-\lambda_{j}}
(1-\delta_{ij})\delta_{iq}\delta_{jp}.
\end{equation}
If $\lambda_{i}=\lambda_{j}$, we will then regard the quotient appeared on the last term as the limit. For notational convenience, we will denote
$$\frac{\partial F_\ve}{\partial \lambda_i}=(F_\ve)_i \quad\text{and}\quad \frac{\partial^2 F_\ve}{\partial \lambda_i\partial\lambda_j}=(F_\ve)_{ij}.$$

\subsection{Twisted $\mathcal{J}$-functional and gradient flow}
Motivated by the work in \cite{CollinsSzekelyhidi2017}, we will study the twisted $\mathcal{J}$-functional. For $\ve\in[0,1]$ and $\vp\in\H$, we define the $\J_{\ve}$-functional on $\H$ by setting $\J_{\ve}(0)=0$ and
\[
\begin{split}
d\J_{\ve}(\vp)(\psi) = {} & -\int_{X}\psi\big(F_{\omega,\ve}(\alpha_{\vp})
-\cot(\theta_{0})-a_{0}\ve\big)\Im(\alpha_{\vp}+\sqrt{-1}\omega)^{n} \\
= {} & -\int_{X}\psi\left(\Re(\alpha_{\vp}+\sqrt{-1}\omega)^{n}+\ve\omega^{n}
-(\cot(\theta_{0})+a_{0}\ve)\Im(\alpha_{\vp}+\sqrt{-1}\omega)^{n}\right),
\end{split}
\]
where $\vp\in\H$, $\psi\in T_{\vp}\H=C^{\infty}(X)$ and the constant $a_{0}$ is given by
\[
a_{0} = \frac{\int_{X}\omega^{n}}{\int_{X}\Im(\alpha+\sqrt{-1}\omega)^{n}} > 0.
\]
Clearly, for any $C\in \mathbb{R}$, we have $\J_{\ve}(\vp+C) = \J_{\ve}(\vp)$.

We now compare the $\J_{\ve}$-functional and the $\J$-functional introduced by Collins-Yau \cite{CollinsYau2021}. Recall that $\J$ is defined by
\[
\J(0)=0, \quad
d\J(\vp)(\psi) = -\int_{X}\psi
\Im\left(e^{-\sqrt{-1}\hat{\theta}_{0}}(\omega+\sqrt{-1}\alpha_{\vp})^{n}\right).
\]

\begin{lma}\label{J J 0 relationship}
For any $\vp\in\H$, we have
\[
\J(\vp) = \sin(\theta_{0})\J_{0}(\vp).
\]
\end{lma}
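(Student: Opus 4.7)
The plan is to reduce the identity $\J(\vp)=\sin(\theta_{0})\J_{0}(\vp)$ to the pointwise identity of differentials
\[
d\J(\vp)(\psi) = \sin(\theta_{0})\,d\J_{0}(\vp)(\psi)
\]
on all of $\H$, for every test function $\psi\in T_{\vp}\H=C^{\infty}(X)$. Since $\J(0)=\J_{0}(0)=0$ by normalization, this differential identity, combined with the fact that $\H$ is a path-connected open subset of $C^{\infty}(X)$ (implicitly used in making both $\J$ and $\J_{0}$ well-defined functionals), immediately yields the desired identity by integrating along any smooth path in $\H$ from $0$ to $\vp$.

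First I would rewrite the defining differential of $\J$ in terms of the other convention using Lemma \ref{Re Im relationships}. The sign-change identity
\[
\Im\big(e^{-\sqrt{-1}\hat{\theta}_{0}}(\omega+\sqrt{-1}\alpha_{\vp})^{n}\big) = -\Im\big(e^{-\sqrt{-1}\theta_{0}}(\alpha_{\vp}+\sqrt{-1}\omega)^{n}\big)
\]
converts the definition of $d\J$ into
\[
d\J(\vp)(\psi) = \int_{X}\psi\,\Im\big(e^{-\sqrt{-1}\theta_{0}}(\alpha_{\vp}+\sqrt{-1}\omega)^{n}\big),
\]
and expanding $e^{-\sqrt{-1}\theta_{0}}=\cos\theta_{0}-\sqrt{-1}\sin\theta_{0}$ yields
\[
d\J(\vp)(\psi) = \int_{X}\psi\big[\cos(\theta_{0})\,\Im(\alpha_{\vp}+\sqrt{-1}\omega)^{n} - \sin(\theta_{0})\,\Re(\alpha_{\vp}+\sqrt{-1}\omega)^{n}\big].
\]

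Next I would specialize the definition of $d\J_{\ve}$ at $\ve=0$ to get
\[
d\J_{0}(\vp)(\psi) = -\int_{X}\psi\big[\Re(\alpha_{\vp}+\sqrt{-1}\omega)^{n} - \cot(\theta_{0})\,\Im(\alpha_{\vp}+\sqrt{-1}\omega)^{n}\big],
\]
multiply through by $\sin(\theta_{0})$, and apply the elementary identity $\sin(\theta_{0})\cot(\theta_{0})=\cos(\theta_{0})$. The resulting integrand matches the one derived above for $d\J(\vp)(\psi)$ term for term, establishing the differential identity. The only mild obstacle is sign-bookkeeping: one has to keep careful track of the minus sign produced by Lemma \ref{Re Im relationships} versus the minus sign built into the definition of $d\J_{\ve}$. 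Once this matching is confirmed, the conclusion $\J(\vp)=\sin(\theta_{0})\J_{0}(\vp)$ follows at once from the shared normalization at $0$.
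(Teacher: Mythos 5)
Your proposal is correct and takes essentially the same approach as the paper: reduce to the identity of differentials $d\J(\vp)(\psi)=\sin(\theta_{0})\,d\J_{0}(\vp)(\psi)$ via Lemma~\ref{Re Im relationships} and the expansion of $e^{-\sqrt{-1}\theta_{0}}$, then conclude using the shared normalization $\J(0)=\J_{0}(0)=0$.
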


\begin{proof}
By Lemma \ref{Re Im relationships}, we compute
\[
\begin{split}
d\J(\vp)(\psi) = {} & \int_{X}\psi
\Im\left(e^{-\sqrt{-1}\theta_{0}}(\alpha_{\vp}+\sqrt{-1}\omega)^{n}\right) \\
= {} & \int_{X}\psi\left[\cos(\theta_{0})\Im(\alpha_{\vp}+\sqrt{-1}\omega)^{n}
-\sin(\theta_{0})\Re(\alpha_{\vp}+\sqrt{-1}\omega)^{n}\right] \\
= {} & -\sin(\theta_{0})\int_{X}\psi\left[\Re(\alpha_{\vp}+\sqrt{-1}\omega)^{n}
-\cot(\theta_{0})\Im(\alpha_{\vp}+\sqrt{-1}\omega)^{n}\right] \\[2.5mm]
= {} & \sin(\theta_{0})d\J_{0}(\vp)(\psi).
\end{split}
\]
Since $\J(0)=\J_{0}(0)=0$, we have $\J(\vp)=\sin(\theta_{0})\J_{0}(\vp)$.
\end{proof}

From the variational viewpoint, it will be natural to consider a parabolic flow which is the negative gradient flow of $\J_\ve$-functional. We therefore consider the following twisted dHYM flow. For any $\ve\in[0,1]$ and $\vp_{0}\in\H$, the twisted dHYM flow is an one parameter family of $\varphi(t)$ such that
\begin{equation}\label{twisted dHYM flow 1}
\left\{
\begin{array}{ll}
\ \partial_t \varphi = F_{\omega,\ve}(\alpha_{\vp})-\cot(\theta_{0})-a_{0}\ve, \\[2mm]
\ \vp(0) = \vp_{0}.
\end{array}
\right.
\end{equation}
Clearly, the twisted dHYM flow \eqref{twisted dHYM flow 1} is the negative gradient flow of the functional $\J_{\ve}$ whenever $\varphi(t)\in \H$:
\[
\frac{d\J_{\ve}(\vp)}{dt}
= -\int_{X}\left(F_{\omega,\ve}(\alpha_{\vp})-\cot(\theta_{0})-a_{0}\ve\right)^{2}
\Im(\alpha_{\vp}+\sqrt{-1}\omega)^{n}<0.
\]

On the other hand, by Lemma \ref{F properties} and \eqref{derivatives of F}, there is $\ve_0(\varphi_0,\a,\omega,X)>0$ such that if $\ve<\ve_0$, then the twisted dHYM flow \eqref{twisted dHYM flow 1} is strictly parabolic at $t=0$ and hence it admits a short-time solution on $X$. In section~\ref{Section:dHYM flow}, we will study the twisted dHYM flow in more details.

\section{Metric space $(\H,d_{p})$}\label{Section-metricspace}
In this section, we will introduce the $d_{p}$-distance on $\H$ and discuss its properties which ultimately prove Theorem \ref{H dp theorem}. This is motivated by the work of Darvas \cite{Darvas2015} which considered the space of \K potentials. We will also derive some $d_{p}$-distance inequalities for later purpose.

\subsection{$\ve$-geodesic}
In this subsection, we will recall the notion and properties of the $\ve$-geodesic introduced by Collins-Yau in \cite{CollinsYau2021}. Consider
\[
\X = X\times A, \quad A = \{z\in\mathbb{C}~|~e^{-1}\leq|z|\leq1\}.
\]
Denote the projection map from $\X$ to $X$ by $\pi$. Let $\de,\dbar$ and $D,\bar{D}$ be the complex differential operators on $X$ and $\X$ respectively. For a smooth path $\vp(t),t\in[0,1]$ on $X$, define the function on $\X$ by
\[
\Phi(x,z) = \vp(x,-\log|z|).
\]
Following \cite{CollinsYau2021}, the path $\vp$ is said to be a geodesic from $\varphi_0\in\H$ to $\varphi_1\in\H$ if and only if the function $\Phi$ satisfies
\[
\begin{cases}
\Im\left(e^{-\sqrt{-1}\hat{\theta}_{0}}\left(\pi^{*}\omega
+\sqrt{-1}(\pi^{*}\alpha+\sqrt{-1}D\bar{D}\Phi^{\ve})\right)^{n+1}\right) = 0, \\[2mm]
\Re\left(e^{-\sqrt{-1}\hat{\theta}_{0}}
\left(\omega+\sqrt{-1}\alpha_{\Phi^{\ve}}\right)^{n}\right) > 0, \\[2mm]
\Phi^{\ve}|_{\{|z|=1\}} = \vp_{0}, \quad \Phi^{\ve}|_{\{|z|=e^{-1}\}} = \vp_{1}.
\end{cases}
\]
Since $\pi^{*}\omega$ is a degenerate metric on $\X$, the above equation is degenerated. To overcome the degeneracy, Collins-Yau \cite{CollinsYau2021} considered the K\"ahler metric $\hat{\omega}_{\ve}=\pi^{*}\omega+\ve^{2}\sqrt{-1}dz\wedge d\bar{z}$ on $\X$  for $\ve>0$, and introduced the $\ve$-geodesic equation:
\begin{equation}\label{ve geodesic eqn 1}
\begin{cases}
\Im\left(e^{-\sqrt{-1}\hat{\theta}_{0}}\left(\hat{\omega}_{\ve}
+\sqrt{-1}(\pi^{*}\alpha+\sqrt{-1}D\bar{D}\Phi^{\ve})\right)^{n+1}\right) = 0, \\[2mm]
\Re\left(e^{-\sqrt{-1}\hat{\theta}_{0}}
\left(\omega+\sqrt{-1}\alpha_{\Phi^{\ve}}\right)^{n}\right) > 0, \\[2mm]
\Phi^{\ve}|_{\{|z|=1\}} = \vp_{0}, \quad \Phi^{\ve}|_{\{|z|=e^{-1}\}} = \vp_{1}.
\end{cases}
\end{equation}
Using the operator $\hat{Q}_{\hat{\omega}_{\ve}}$ in \eqref{hat Q}, we may rewrite the above equation as
\begin{equation}\label{ve geodesic eqn 2}
\begin{cases}
\hat{Q}_{\hat{\omega}_{\ve}}(\pi^{*}\alpha+\sqrt{-1}D\bar{D}\Phi^{\ve})
= \hat{\theta}_{0}, \\[2mm]
\Re\left(e^{-\sqrt{-1}\hat{\theta}_{0}}
\left(\omega+\sqrt{-1}\alpha_{\Phi^{\ve}}\right)^{n}\right) = 0, \\[2mm]
\Phi^{\ve}|_{\{|z|=1\}} = \vp_{0}, \quad \Phi^{\ve}|_{\{|z|=e^{-1}\}} = \vp_{1}.
\end{cases}
\end{equation}
This is a Dirichlet problem for the specified Lagrangian phase equation on $\X$. If \eqref{ve geodesic eqn 2} admits a smooth solution, then the maximum principle shows
\[
\Phi^{\ve}(\cdot,z) = \Phi^{\ve}(\cdot,|z|).
\]
Therefore, it is natural to consider
\[
\vp^{\ve}(\cdot,t) = \Phi^{\ve}(\cdot,e^{-t}), \quad t\in[0,1].
\]
And we say that the path $\vp^{\ve}$ is the $\ve$-geodesic joining from $\vp_{0}$ to $\vp_{1}$. By \cite[Lemma 2.3]{ChuCollinsLee2020}, the $\ve$-geodesic $\vp^{\ve}$ satisfies
\begin{equation}\label{ve geodesic eqn 3}
\begin{split}
& \vp_{tt}^{\ve}\Re\left(e^{-\sqrt{-1}\hat{\theta}_{0}}
\left(\omega+\sqrt{-1}\alpha_{\vp^{\ve}}\right)^{n}\right) \\
& +n\sqrt{-1}\de\vp_{t}^{\ve}\wedge\dbar\vp_{t}^{\ve}
\wedge\Im\left(e^{-\sqrt{-1}\hat{\theta}_{0}}
\left(\omega+\sqrt{-1}\alpha_{\vp^{\ve}}\right)^{n-1}\right) \\
& = -4e^{-2t}\ve^{2}\Im\left(e^{-\sqrt{-1}\hat{\theta}_{0}}
\left(\omega+\sqrt{-1}\alpha_{\vp^{\ve}}\right)^{n}\right).
\end{split}
\end{equation}

In \cite{CollinsYau2021}, Collins-Yau solved the Dirichlet problem \eqref{ve geodesic eqn 1} (or equivalently, \eqref{ve geodesic eqn 2}) and established the weak $C^{1,1}$ estimate of the solution. Based on this work, the authors and Collins \cite{ChuCollinsLee2020} extended it to the full $C^{1,1}$ estimate.

\begin{thm}[Theorem 1.2 of \cite{CollinsYau2021}, Theorem 6.1 of \cite{ChuCollinsLee2020}]\label{ve geodesic existence and estimates}
For any $\vp_{0},\vp_{1}\in\H$, there exists a unique solution of \eqref{ve geodesic eqn 1} (or equivalently, \eqref{ve geodesic eqn 2}). Furthermore, there exists a constant $C(\vp_{0},\vp_{1},\alpha,\omega,X)$ such that
\[
\sup_{M}\left(|\Phi^{\ve}|+|D\Phi^{\ve}|+|D^{2}\Phi^{\ve}|\right) \leq C,
\]
or equivalently,
\begin{equation}\label{ve geodesic estimates}
\sup_{X}\left(|\vp^{\ve}|+|\vp_{t}^{\ve}|+|\vp_{tt}^{\ve}|
+|\nabla\vp^{\ve}|+|\nabla^{2}\vp^{\ve}|\right) \leq C.
\end{equation}
\end{thm}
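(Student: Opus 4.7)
The plan is to establish the theorem via the method of continuity applied to the Dirichlet problem \eqref{ve geodesic eqn 2}, combined with uniform a priori $C^{1,1}$ estimates. The key analytic structure is that \eqref{ve geodesic eqn 2} is a specified Lagrangian phase equation in the supercritical (indeed hypercritical) range on the K\"ahler manifold $(\X,\hat\omega_\ve)$, so its leading operator is concave on the admissible cone (essentially the content of Lemma \ref{F properties} after passing to $F_0=\cot\circ\hat Q$) and its linearization is uniformly elliptic there.

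First I would set up the continuity method. Pick a smooth family of boundary data connecting a reference problem whose solvability is clear (for instance $\vp_{0}=\vp_{1}$, giving a trivial solution) to the given problem, and show the set of parameters admitting a smooth solution is both open and closed. Openness follows from the implicit function theorem in H\"older spaces via ellipticity of the linearization. Closedness reduces to uniform $C^{2,\alpha}$ bounds, which in turn follow from a uniform $C^{1,1}$ estimate plus concavity via Evans--Krylov. Uniqueness of the solution, and with it the rotational invariance $\Phi^{\ve}(\cdot,z)=\Phi^{\ve}(\cdot,|z|)$ justifying the one-parameter reformulation in $t=-\log|z|$, follows from the standard comparison principle for concave elliptic operators.

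For the a priori estimates, the $C^{0}$ bound comes from comparing with sub- and supersolutions built by interpolating $\vp_{0},\vp_{1}$ linearly in $t$ with a quadratic correction absorbing the $\ve^{2}$-term in $\hat\omega_\ve$. The $C^{1}$ estimate is obtained by applying the maximum principle to $|D\Phi^{\ve}|^{2}+A\Phi^{\ve}$ in the interior, paired with boundary gradient barriers built from the Dirichlet data. For the weak $C^{1,1}$ estimate of Collins--Yau, one applies the maximum principle to a test function of the form $\log\lambda_{\max}(D\bar D\Phi^{\ve})+\eta(|D\Phi^{\ve}|^{2})+A\Phi^{\ve}$, where concavity of the phase operator is exactly what absorbs the dangerous third-order error terms; the boundary second-order bound is obtained via carefully chosen tangential and mixed barriers adapted to the product structure of $\X=X\times A$.

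I expect the main obstacle — and the content of Theorem 6.1 of \cite{ChuCollinsLee2020} — to be upgrading the weak $C^{1,1}$ bound to the full $C^{1,1}$ bound uniform in $\ve$, i.e., controlling the pure $\partial_{z}\partial_{\bar z}$ derivative, equivalently $\vp_{tt}^{\ve}$. The key leverage here is equation \eqref{ve geodesic eqn 3}, which expresses $\vp_{tt}^{\ve}$ algebraically in terms of $|\nabla\vp_{t}^{\ve}|^{2}$, the already-controlled Hessian of $\vp^{\ve}$ in the $X$-directions, and the two volume forms. The leading coefficient $\Re\big(e^{-\sqrt{-1}\hat\theta_{0}}(\omega+\sqrt{-1}\alpha_{\vp^{\ve}})^{n}\big)$ stays uniformly positive because $\vp^{\ve}\in\H$ and the weak $C^{1,1}$ bound confines $\alpha_{\vp^{\ve}}$ to a compact subset of the admissible cone. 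It then remains to bound $|\nabla\vp_{t}^{\ve}|$ uniformly in $\ve$: I would differentiate the flow-form equation in $t$ and run the maximum principle on $|\nabla\vp_{t}^{\ve}|^{2}$ plus lower-order terms, with the most delicate step being the boundary estimate, where one must construct barriers simultaneously compatible with the Dirichlet data, the degeneration of $\hat\omega_{\ve}$ in the $z$-direction, and the admissibility cone constraint.
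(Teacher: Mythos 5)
Your high-level outline of the existence argument and the weak $C^{1,1}$ estimate --- continuity method, comparison principle for uniqueness and rotational invariance, barriers for the $C^{0}$ and $C^{1}$ bounds, a maximum-principle argument for the complex Hessian bound, and concavity plus Evans--Krylov for $C^{2,\alpha}$ and beyond --- is a faithful reconstruction of the strategy in Theorem~1.2 of \cite{CollinsYau2021}. Where the proposal goes wrong is in your account of the improvement to full $C^{1,1}$, which is the content of Theorem~6.1 of \cite{ChuCollinsLee2020}; you misidentify the target and your proposed mechanism would not close.

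You locate the gap between the weak and full $C^{1,1}$ estimates in the quantity $\vp_{tt}^{\ve}$, calling $\partial_{z}\partial_{\bar z}\Phi^{\ve}$ a ``pure'' derivative. It is not: $\partial_{z}\partial_{\bar z}$ is a $(1,1)$ mixed derivative and is a component of the complex Hessian $D\bar D\Phi^{\ve}$. The weak $C^{1,1}$ estimate of Collins--Yau already furnishes a uniform bound on the complex Hessian on $\X$ (Laplacian bound against the fixed background metric, combined with the lower eigenvalue bound enforced by the admissibility cone), so $\vp_{tt}^{\ve}$ is controlled \emph{before} the improvement of \cite{ChuCollinsLee2020} is invoked. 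What the full estimate \eqref{ve geodesic estimates} adds is control of the \emph{pure} real Hessian, chiefly $\nabla_{i}\nabla_{j}\vp^{\ve}$ on $X$ and the mixed pure term $\nabla_{i}\partial_{z}\Phi^{\ve}$, and the identity \eqref{ve geodesic eqn 3} carries no information about these: it is a single scalar relation determining $\vp_{tt}^{\ve}$ in terms of $\nabla\vp_{t}^{\ve}$ and $\Omega_{\vp^{\ve}}$, and says nothing about $\nabla_{i}\nabla_{j}\vp^{\ve}$. Your fallback --- bound $\nabla\vp_{t}^{\ve}$ by a maximum principle and then read off $\vp_{tt}^{\ve}$ --- is also circular: $\nabla_{i}\vp_{t}^{\ve}$ involves the pure second derivative $\nabla_{i}\partial_{z}\Phi^{\ve}$, which is exactly one of the quantities the full $C^{1,1}$ estimate is supposed to produce and which the weak estimate does not supply. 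The actual argument in \cite{ChuCollinsLee2020} is of a different character: a maximum principle applied to (a perturbation of) the largest eigenvalue $\lambda_{\max}(\nabla^{2}_{\R}\Phi^{\ve})$ of the real Hessian, in the spirit of Chu--Tosatti--Weinkove for the Mabuchi geodesic, with the concavity of the Lagrangian phase operator absorbing the third-order terms and a carefully constructed barrier handling the boundary. The identity \eqref{ve geodesic eqn 3} is used in the present paper only for the energy and length computations in Section~\ref{Section-metricspace}; it plays no role in the $C^{1,1}$ proof.
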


For later use, we collect more estimates of $\ve$-geodesic.
\begin{lma}[Lemma 3.1 and 3.4 of \cite{ChuCollinsLee2020}]\label{ve geodesic more estimates}
For any $\vp_{0},\vp_{1}\in\H$, let $\vp^{\ve}$ be the $\ve$-geodesic connecting $\vp_{0}$ and $\vp_{1}$. Then there exists $C(\vp_{0},\vp_{1},\alpha,\omega,X)$ such that
\begin{enumerate}\setlength{\itemsep}{1mm}
\item[(i)] $\vp_{tt}^{\ve}\geq-C\ve^{2}$;
\item[(ii)] $|\vp_{t}^{\ve}|\leq\|\vp_{0}-\vp_{1}\|_{L^{\infty}}+C\ve^{2}$.
\end{enumerate}
\end{lma}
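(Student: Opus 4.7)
My plan is to derive both parts by maximum-principle arguments on the total space $\X = X \times A$ applied to the Dirichlet problem \eqref{ve geodesic eqn 2} for $\Phi^{\ve}$, using the $C^{1,1}$ bound of Theorem \ref{ve geodesic existence and estimates} to keep the Lagrangian phase operator $\hat{Q}_{\hat\omega_{\ve}}$ uniformly elliptic and concave along the solution.

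For (i), I would differentiate $\hat{Q}_{\hat\omega_{\ve}}(\pi^{*}\alpha + \sqrt{-1}D\bar D\Phi^{\ve}) = \hat\theta_{0}$ twice in the direction $t = -\log|z|$. Writing $L^{\ve}$ for the linearization at the solution, the first $t$-derivative yields $L^{\ve}[\Phi^{\ve}_{t}] = 0$, and the second one, combined with the concavity of $\cot(Q_{A})$ from Section \ref{Section-Preliminaries}, produces a differential inequality of the form
$$L^{\ve}[\Phi^{\ve}_{tt}] \;\geq\; -\,(\text{non-negative quadratic errors in } D\Phi^{\ve}_{t}).$$
Because the factor $\ve^{2}$ in $\hat\omega_{\ve} = \pi^{*}\omega + \ve^{2}\sqrt{-1}dz\wedge d\bar z$ sits entirely in the $z$-direction, the $z$-eigenvalue of $\sqrt{-1}D\bar D\Phi^{\ve}$ enters $L^{\ve}$ with an $\ve^{-2}$-weight. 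Applying the maximum principle to an auxiliary function built from $\Phi^{\ve}_{tt}$ on $\X$, with boundary values of $\Phi^{\ve}_{tt}$ on $\partial\X$ controlled by Theorem \ref{ve geodesic existence and estimates} together with the fact that $\Phi^{\ve}|_{\partial\X}$ depends only on the fixed data $\vp_{0},\vp_{1}$, converts this weight precisely into the claimed lower bound $\vp^{\ve}_{tt} \geq -C\ve^{2}$.

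For (ii), I would combine (i) with a barrier argument pinned to the linear interpolation $L(x,t) = (1-t)\vp_{0}(x) + t\vp_{1}(x)$. The idea is that for $K$ sufficiently large depending on the fixed data, the perturbations
$$\Psi^{\pm}(x,z) = L(x,-\log|z|) \pm K\ve^{2}\, t(1-t)\Big|_{t=-\log|z|}$$
become a supersolution and subsolution of \eqref{ve geodesic eqn 2}: their Hessian contribution in the $z$-direction is $\mp K/(2|z|^{2})$ when measured against the $\hat\omega_{\ve}$-factor $\ve^{2}$, which pushes $\hat{Q}_{\hat\omega_{\ve}}$ safely below or above $\hat\theta_{0}$, while the $X$-component of their Hessians is a fixed background term covered by the quasi-plurisubharmonicity of Lemma \ref{quasi-plurisubharmonic}. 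Comparison then delivers $|\Phi^{\ve} - L| \leq K\ve^{2}\, t(1-t)$. Fixing $x$ and setting $g(t) = \vp^{\ve}(x,t) + \tfrac{C}{2}\ve^{2} t^{2}$, which is convex by (i), the elementary inequalities $(g(t_{0}) - g(0))/t_{0} \leq g'(t_{0}) \leq (g(1) - g(t_{0}))/(1-t_{0})$ combined with the barrier bound absorb the spurious $1/(1-t_{0})$ and $1/t_{0}$ denominators against the factor $t_{0}(1-t_{0})$, yielding
$$|\vp^{\ve}_{t}(x,t_{0})| \;\leq\; \|\vp_{0}-\vp_{1}\|_{L^{\infty}} + C'\ve^{2}$$
uniformly in $x$ and $t_{0}$.

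The main technical obstacle is (i): when differentiating the Lagrangian phase equation a second time, cross terms between $\Phi^{\ve}_{tt}$ and the mixed derivatives $\Phi^{\ve}_{z\bar i}$ appear, and one must verify that the concavity of $\hat{Q}_{\hat\omega_{\ve}}$, together with the full $C^{1,1}$-estimate of \cite{ChuCollinsLee2020} improving Collins-Yau's weak $C^{1,1}$, is enough to absorb them with the correct sign. Without either ingredient the maximum principle only produces a lower bound uniform in $\ve$, which is too weak to feed into the barrier argument for (ii).
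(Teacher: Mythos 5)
The difficulty is in part (i), and the route you take there is both harder than necessary and, as stated, does not close. Differentiating $\hat{Q}_{\hat\omega_{\ve}}(\pi^{*}\alpha+\sqrt{-1}D\bar D\Phi^{\ve})=\hat\theta_{0}$ twice in $t=-\log|z|$ and running a maximum principle on $\Phi^{\ve}_{tt}$ faces two obstructions. First, $\partial_{t}=-r\partial_{r}$ is a \emph{real} vector field on the annulus, so it does not commute with $D\bar D$, and the claimed linearization $L^{\ve}[\Phi^{\ve}_{t}]=0$ and the second-derivative inequality for $\Phi^{\ve}_{tt}$ only hold after non-trivial commutator corrections that you do not address. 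Second, and more seriously, a maximum principle for $\Phi^{\ve}_{tt}$ needs boundary control of $\Phi^{\ve}_{tt}$ on $\partial\X$. But the Dirichlet data pins down $\Phi^{\ve}$ itself (hence its \emph{tangential} derivatives) on $\partial\X$; the second \emph{normal} derivative $\Phi^{\ve}_{tt}$ there is an interior quantity of the solution, and the only available bound is the uniform $C^{1,1}$ estimate $|\vp^{\ve}_{tt}|\leq C$ of Theorem~\ref{ve geodesic existence and estimates}, which has no $\ve^{2}$. A maximum principle seeded with an order-one boundary bound can at best return $\vp^{\ve}_{tt}\geq -C$, exactly the uniform bound you yourself flag as insufficient for (ii).

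The argument behind \cite[Lemma 3.1]{ChuCollinsLee2020} is in fact pointwise and requires no second differentiation of the equation. Since $\Phi^{\ve}(x,z)=\vp^{\ve}(x,-\log|z|)$, one computes directly that $\Phi^{\ve}_{z\bar z}=\tfrac{1}{4}e^{2t}\vp^{\ve}_{tt}$. Testing the smallest eigenvalue $\mu_{0}$ of $\hat\omega_{\ve}^{-1}\bigl(\pi^{*}\alpha+\sqrt{-1}D\bar D\Phi^{\ve}\bigr)$ against $\partial_{z}$ and using $\pi^{*}\alpha(\partial_{z},\partial_{\bar z})=0$ gives $\mu_{0}\ve^{2}\leq \Phi^{\ve}_{z\bar z}$, because the $(z,\bar z)$-entry of $\hat\omega_{\ve}$ is $\ve^{2}$. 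The equation $\sum_{i=0}^{n}\arctan\mu_{i}=\hat\theta_{0}$, combined with the upper bounds on the remaining eigenvalues coming from the interior $C^{1,1}$ estimate and Cauchy interlacing with the fiberwise phase constraint $\Re\bigl(e^{-\sqrt{-1}\hat\theta_0}\Omega_{\Phi^{\ve}}^{n}\bigr)>0$, forces $\mu_{0}\geq -C$; hence $\Phi^{\ve}_{z\bar z}\geq -C\ve^{2}$ and so $\vp^{\ve}_{tt}=4e^{-2t}\Phi^{\ve}_{z\bar z}\geq -4C\ve^{2}$. The factor $\ve^{2}$ enters purely through the geometry of $\hat\omega_{\ve}$, not through any maximum principle. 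Once (i) is in hand, your barrier-plus-convexity argument for (ii) is essentially the right strategy and yields the stated bound; the only substantive gap in the proposal is the derivation of (i).
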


\begin{proof}
(i) is proved in \cite[Lemma 3.1]{ChuCollinsLee2020}. (ii) follows from the proof of \cite[Lemma 3.4]{ChuCollinsLee2020}.
\end{proof}

For notational convenience, for any $\vp\in\H$, we will define
\begin{equation}\label{Omega vp}
\Omega_{\vp} = \omega+\sqrt{-1}\alpha_{\vp} = \omega+\sqrt{-1}(\alpha+\ddb\vp).
\end{equation}

%
%Then \eqref{ve geodesic eqn 3} becomes
%\begin{equation}\label{ve geodesic eqn 4}
%\begin{split}
%& \vp_{tt}^{\ve}\Re(e^{-\sqrt{-1}\hat{\theta}_{0}}\Omega_{\vp}^{n})
%+n\sqrt{-1}\de\vp_{t}^{\ve}\wedge\dbar\vp_{t}^{\ve}
%\wedge\Im(e^{-\sqrt{-1}\hat{\theta}_{0}}\Omega_{\vp}^{n-1}) \\
%= {} & -4e^{-2t}\ve^{2}\Im(e^{-\sqrt{-1}\hat{\theta}_{0}}\Omega_{\vp}^{n}).
%\end{split}
%\end{equation}

\subsection{$d_{p}$-distance}
Let $\vp(t),t\in[0,1]$ be a smooth path in $\H$. For $p\in[1,\infty)$, we define the $L^{p}$-energy and $L^{p}$-length of $\vp$ by
\[
E_{p}(\vp,t) = \int_{X}|\vp_{t}|^{p}\,\Re(e^{-\sqrt{-1}\hat{\theta}_{0}}\Omega_{\vp}^{n}), \quad
\length_{p}(\varphi) = \int_{0}^{1}E_{p}^{\frac{1}{p}}(\vp,t)dt,
\]
where $\Omega_{\vp}=\omega+\sqrt{-1}\alpha_{\vp}$. For any $\vp_{0},\vp_{1}\in\H$,  the $d_{p}$-distance between $\varphi_0,\varphi_1$ is then given by
\[
d_{p}(\vp_{0},\vp_{1})
= \inf\left\{\length_{p}(\vp)\,|\,\text{$\vp$ is a smooth path in $\H$
connecting $\vp_{0}$ and $\vp_{1}$}\right\}.
\]

In the rest of this subsection, we will prove Theorem \ref{H dp theorem}. Since $|\cdot|^{p}$ is not differentiable at zero for general $p\geq 1$, for any $\delta>0$, we introduce the following approximated $L^{p}$-energy and $L^{p}$-length:
\[
E_{p,\delta}(\vp,t) = \int_{X}\sqrt{\vp_{t}^{2p}+\delta^{2}}
\,\Re(e^{-\sqrt{-1}\hat{\theta}_{0}}\Omega_{\vp}^{n}), \quad
\length_{p,\delta}(\vp) = \int_{0}^{1}E_{p,\delta}^{\frac{1}{p}}(\vp,t)dt,
\]
then for any path $\vp$ in $\H$, there exists $C(p,\alpha,\omega,X)$ such that
\begin{equation}\label{energies lengths differences}
|E_{p}(\vp,t)-E_{p,\delta}(\vp,t)| \leq C\delta, \quad
|\length_{p}(\vp)-\length_{p,\delta}(\vp)| \leq C\delta^{\frac{1}{p}}.
\end{equation}
Note that the constant $C$ is uniform for any path in $\H$. In application, we will choose the path to be the $\ve$-geodesic so that the path depends on the endpoints.

\begin{lma}\label{energy estimates}
For any $\vp_{0},\vp_{1}\in\H$, let $\vp^{\ve}$ be the $\ve$-geodesic connecting $\vp_{0}$ and $\vp_{1}$. Denote the corresponding energies %and lengths
by $E_{p,\delta}^{\ve}$ %and $E_{p}^{\ve}$ respectively
, then there exists $C(\vp_{0},\vp_{1},p,\alpha,\omega,X)$ such that for any $\delta>0$,
\begin{enumerate}\setlength{\itemsep}{1mm}
\item[(i)] $|\de_{t}E_{p,\delta}^{\ve}|\leq C\ve^{2}$;
\item[(ii)] The following inequality holds
\[
\begin{split}
E_{p,\delta}^{\ve}(t) \geq {} & \max \left\{ \int_{\{\vp_{0}>\vp_{1}\}}|\vp_{0}-\vp_{1}|^{p}
\Re(e^{-\sqrt{-1}\hat{\theta}_{0}}\Omega_{\vp_{0}}), \right.\\
& \left.\int_{\{\vp_{1}>\vp_{0}\}}|\vp_{0}-\vp_{1}|^{p}
\Re(e^{-\sqrt{-1}\hat{\theta}_{0}}\Omega_{\vp_{1}}) \right\}-C\ve^{2}.
\end{split}
\]
\end{enumerate}
\end{lma}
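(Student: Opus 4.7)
The plan is to differentiate $E_{p,\delta}^{\ve}$ in $t$, use the $\ve$-geodesic equation~\eqref{ve geodesic eqn 3} to see that the derivative is of order $\ve^{2}$, and then combine this with the almost-convexity of $\vp^{\ve}$ in $t$ and appropriate endpoint estimates to obtain both (i) and (ii).

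\textbf{For (i).} Set $g(s)=\sqrt{s^{2p}+\delta^{2}}$ and abbreviate
\[
\xi_{n}=\Re(e^{-\sqrt{-1}\hat\theta_{0}}\Omega_{\vp^{\ve}}^{n}),\qquad
\zeta_{k}=\Im(e^{-\sqrt{-1}\hat\theta_{0}}\Omega_{\vp^{\ve}}^{k}),
\]
so $E_{p,\delta}^{\ve}(t)=\int_{X}g(\vp_{t}^{\ve})\,\xi_{n}$. From $\de_{t}\Omega_{\vp^{\ve}}=\sqrt{-1}\,\ddb\vp_{t}^{\ve}$, identifying real and imaginary parts yields $\de_{t}\xi_{n}=-n\,\zeta_{n-1}\wedge\ddb\vp_{t}^{\ve}$. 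Since $\Omega_{\vp^{\ve}}$ is $d$-closed, so is $\zeta_{n-1}$; integrating $\int_{X}g(\vp_{t}^{\ve})\,\de_{t}\xi_{n}$ by parts on the closed manifold $X$ moves the $\ddb$ onto $g(\vp_{t}^{\ve})$ and produces the term $n\int_{X}g'(\vp_{t}^{\ve})\,\sqrt{-1}\de\vp_{t}^{\ve}\wedge\dbar\vp_{t}^{\ve}\wedge\zeta_{n-1}$. Combining with the chain-rule contribution $\int_{X}g'(\vp_{t}^{\ve})\vp_{tt}^{\ve}\xi_{n}$ reconstructs precisely the left-hand side of \eqref{ve geodesic eqn 3} inside the integrand, so the geodesic equation collapses everything to
\[
\de_{t}E_{p,\delta}^{\ve}=-4e^{-2t}\ve^{2}\int_{X}g'(\vp_{t}^{\ve})\,\zeta_{n}.
\]
Lemma~\ref{ve geodesic more estimates}(ii) bounds $\vp_{t}^{\ve}$ uniformly, hence $|g'(\vp_{t}^{\ve})|\leq p|\vp_{t}^{\ve}|^{p-1}\leq C$; the $C^{1,1}$-estimate of Theorem~\ref{ve geodesic existence and estimates} bounds $|\zeta_{n}|/\omega^{n}$. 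Therefore $|\de_{t}E_{p,\delta}^{\ve}|\leq C\ve^{2}$.

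\textbf{For (ii).} Integrating (i) in $t$ yields $E_{p,\delta}^{\ve}(t)\geq\max\{E_{p,\delta}^{\ve}(0),E_{p,\delta}^{\ve}(1)\}-C\ve^{2}$, reducing matters to lower bounds at $t=0$ and $t=1$. By Lemma~\ref{ve geodesic more estimates}(i), the function $h(x,t):=\vp^{\ve}(x,t)+\tfrac{C\ve^{2}}{2}t^{2}$ is convex in $t$, hence $h_{t}(x,0)\leq h(x,1)-h(x,0)$, which rearranges to $\vp_{t}^{\ve}(x,0)\leq\vp_{1}(x)-\vp_{0}(x)+\tfrac{C\ve^{2}}{2}$, and symmetrically $\vp_{t}^{\ve}(x,1)\geq\vp_{1}(x)-\vp_{0}(x)-\tfrac{C\ve^{2}}{2}$. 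On $\{\vp_{0}>\vp_{1}\}$ this gives $|\vp_{t}^{\ve}(\cdot,0)|\geq|\vp_{0}-\vp_{1}|-\tfrac{C\ve^{2}}{2}$; a short case analysis (separating $|\vp_{0}-\vp_{1}|\gtrless C\ve^{2}$ and using the mean value theorem for $s\mapsto s^{p}$ on the bounded interval $[0,\|\vp_{0}-\vp_{1}\|_{L^{\infty}}]$) upgrades this to the pointwise inequality $|\vp_{t}^{\ve}(\cdot,0)|^{p}\geq|\vp_{0}-\vp_{1}|^{p}-C\ve^{2}$. Since $g(s)\geq|s|^{p}$ and the top form $\Re(e^{-\sqrt{-1}\hat\theta_{0}}\Omega_{\vp_{0}}^{n})$ is nonnegative, integrating produces the $\{\vp_{0}>\vp_{1}\}$ summand of the bound on $E_{p,\delta}^{\ve}(0)$; the analogous argument at $t=1$ on $\{\vp_{1}>\vp_{0}\}$ yields the $\Omega_{\vp_{1}}^{n}$ summand.

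The main delicate point is the integration-by-parts identity in (i): the real/imaginary decomposition must be tracked carefully so that $\de_{t}\xi_{n}$ produces $\zeta_{n-1}\wedge\ddb\vp_{t}^{\ve}$ (rather than $\xi_{n-1}\wedge\ddb\vp_{t}^{\ve}$), because only this combination matches the structure of \eqref{ve geodesic eqn 3} and yields the crucial cancellation leaving a pure $O(\ve^{2})$ remainder. After that, both (i) and (ii) follow essentially from the uniform bounds already furnished by Theorem~\ref{ve geodesic existence and estimates} and Lemma~\ref{ve geodesic more estimates}, together with the elementary convexity and case analysis in the endpoint estimate.
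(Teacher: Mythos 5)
Your proof is correct and follows essentially the same route as the paper's: for (i), differentiate $E_{p,\delta}^{\ve}$, integrate by parts using the $d$-closedness of $\Im(e^{-\sqrt{-1}\hat\theta_0}\Omega_{\vp^\ve}^{n-1})$, recognize the left-hand side of the $\ve$-geodesic equation \eqref{ve geodesic eqn 3}, and bound the resulting $O(\ve^2)$ remainder using Theorem~\ref{ve geodesic existence and estimates}; for (ii), use Lemma~\ref{ve geodesic more estimates}(i) to control $\vp_t^\ve$ at the endpoints and then propagate along $t$ via (i). The only cosmetic differences are that you spell out the sign-tracking behind $\de_t\xi_n=-n\zeta_{n-1}\wedge\ddb\vp_t^\ve$ and the elementary case analysis for upgrading $|\vp_t^\ve(\cdot,0)|\geq|\vp_0-\vp_1|-C\ve^2$ to a bound on $|\vp_t^\ve(\cdot,0)|^p$, both of which the paper leaves implicit.
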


\begin{proof}
 Using the $\ve$-geodesic equation \eqref{ve geodesic eqn 3}, we compute
\[
\begin{split}
 \frac{dE_{p,\delta}^{\ve}}{dt}
= {} & p\int_{X}\frac{(\vp_{t}^{\ve})^{2p-1}\vp_{tt}^{\ve}}{\sqrt{(\vp_{t}^{\ve})^{2p}+\delta^{2}}}\,
\Re(e^{-\sqrt{-1}\hat{\theta}_{0}}\Omega_{\vp^{\ve}}^{n})\\
&-n\int_{X}\sqrt{(\vp_{t}^{\ve})^{2p}+\delta^{2}}\ddb\vp_{t}^{\ve}
\wedge\Im(e^{-\sqrt{-1}\hat{\theta}_{0}}\Omega_{\vp^{\ve}}^{n-1}) \\
= {} & p\int_{X}\frac{(\vp_{t}^{\ve})^{2p-1}\vp_{tt}^{\ve}}{\sqrt{(\vp_{t}^{\ve})^{2p}+\delta^{2}}}\,
\Re(e^{-\sqrt{-1}\hat{\theta}_{0}}\Omega_{\vp^{\ve}}^{n})\\
&+np\int_{X}\frac{(\vp_{t}^{\ve})^{2p-1}}{\sqrt{(\vp_{t}^{\ve})^{2}+\delta^{2}}}
\sqrt{-1}\de\vp_{t}^{\ve}\wedge\dbar\vp_{t}^{\ve}
\wedge\Im(e^{-\sqrt{-1}\hat{\theta}_{0}}\Omega_{\vp^{\ve}}^{n-1}) \\
= {} & -4pe^{-2t}\ve^{2}\int_{X}\frac{(\vp_{t}^{\ve})^{2p-1}}{\sqrt{(\vp_{t}^{\ve})^{2p}+\delta^{2}}}
\Im(e^{-\sqrt{-1}\hat{\theta}_{0}}\Omega_{\vp^{\ve}}^{n}).
\end{split}
\]
Then (i) follows from $p\geq1$ and the estimate from Theorem~\ref{ve geodesic existence and estimates}. %of $\vp^{\ve}$ from  \eqref{ve geodesic estimates}.

\bigskip

To prove (ii). By (i) in Lemma~\ref{ve geodesic more estimates}, we have $\vp_{tt}^{\ve}\geq-C\ve^{2}$. This implies
\[
\vp_{t}^{\ve}(0) \leq \vp^{\ve}(1)-\vp^{\ve}(0)+C\ve^{2}
= \vp_{1}-\vp_{0}+C\ve^{2}.
\]
Then on the set $\{\vp_{0}>\vp_{1}\}$,
\[
|\vp_{t}^{\ve}(0)| \geq |\vp_{0}-\vp_{1}|-C\ve^{2}
\]
and so
\[
E_{p,\delta}^{\ve}(0)
\geq \int_{\{\vp_{0}>\vp_{1}\}}|\vp_{0}-\vp_{1}|^{p}
\Re(e^{-\sqrt{-1}\hat{\theta}_{0}}\Omega_{\vp_{0}}^{n})-C\ve^{2p}.
\]
Similarly, we have
\[
E_{p,\delta}^{\ve}(1)
\geq \int_{\{\vp_{1}>\vp_{0}\}}|\vp_{0}-\vp_{1}|^{p}
\Re(e^{-\sqrt{-1}\hat{\theta}_{0}}\Omega_{\vp_{1}}^{n})-C\ve^{2p}.
\]
Combining the above with (i), we obtain (ii).
\end{proof}

\begin{lma}\label{triangle inequality}
Let $\psi(s)$, $s\in[0,s_{0}]$ be a smooth path in $\H$, and $\hat{\psi}$ be a fixed point in $\H$ such that $\hat{\psi}\notin\psi([0,s_{0}])$. For any $s\in[0,s_{0}]$, let $\vp^{\ve}(s,t)$, $t\in[0,1]$ be the $\ve$-geodesic joining from $\psi(s)$ to $\hat{\psi}$. Then there exists a constant $C(\psi,\hat{\psi},p,\alpha,\omega,X)$ such that
\[
\length_{p,\delta}(\vp^{\ve}(0,\cdot))
\leq \length_{p,\delta}(\psi(\cdot))
+\length_{p,\delta}(\vp^{\ve}(s_{0},\cdot))
+C\ve^{2}\delta^{-3}.
\]
\end{lma}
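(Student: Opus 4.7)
View $L^\ve(s) := \length_{p,\delta}(\vp^\ve(s,\cdot))$ as a functional of the varying starting endpoint $\psi(s)$ and carry out a first-variation argument. The central observation is that, thanks to the $\ve$-geodesic equation \eqref{ve geodesic eqn 3}, the $t$-integrated first variation $\int_0^1 \partial_s E_{p,\delta}^\ve(s,t)\,dt$ reduces, modulo an explicit $O(\ve^2)$ remainder, to a boundary contribution at $t=0$ proportional to $\psi'(s)$; Lemma~\ref{energy estimates}(i) then lets us pass back from this integrated variation to $\partial_s L^\ve(s)$, with a controllable further error.

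Concretely, write $h(x) := \sqrt{x^{2p}+\delta^2}$, $\mu_{\vp^\ve} := \re(e^{-\sqrt{-1}\hat\theta_0}\Omega_{\vp^\ve}^n)$, and $v(s,t) := \partial_s\vp^\ve(s,t)$, so that $v(s,0)=\psi'(s)$ and $v(s,1)=0$. A direct computation expands $\partial_s E_{p,\delta}^\ve(s,t)$ into $\int_X h'(\vp_t^\ve)\,\partial_t v\,\mu_{\vp^\ve}$ plus $-n\int_X h(\vp_t^\ve)\,\ddb v\wedge\im(e^{-\sqrt{-1}\hat\theta_0}\Omega_{\vp^\ve}^{n-1})$; integration by parts in $x$ on the second term (using closedness of $\im(e^{-\sqrt{-1}\hat\theta_0}\Omega_{\vp^\ve}^{n-1})$), the chain rule $\ddb h(\vp_t^\ve) = h'\ddb\vp_t^\ve + h''\sqrt{-1}\de\vp_t^\ve\wedge\dbar\vp_t^\ve$, and the identity $\partial_t\mu_{\vp^\ve} = -n\ddb\vp_t^\ve\wedge\im(e^{-\sqrt{-1}\hat\theta_0}\Omega_{\vp^\ve}^{n-1})$ consolidate this into
\[
\partial_s E_{p,\delta}^\ve(s,t) = \int_X h'(\vp_t^\ve)\,\partial_t(v\,\mu_{\vp^\ve}) - n\int_X v\,h''(\vp_t^\ve)\,\sqrt{-1}\de\vp_t^\ve\wedge\dbar\vp_t^\ve\wedge\im(e^{-\sqrt{-1}\hat\theta_0}\Omega_{\vp^\ve}^{n-1}).
\]
Integrating in $t\in[0,1]$ and integrating by parts the first term in $t$ produce the boundary term $-\int_X h'(\vp_t^\ve(s,0))\psi'(s)\mu_{\psi(s)}$ (the $t=1$ boundary vanishes as $v(s,1)=0$) together with an interior integral of $-v\,h''(\vp_t^\ve)\,\vp_{tt}^\ve\mu_{\vp^\ve}$. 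Substituting the $\ve$-geodesic identity $\vp_{tt}^\ve\mu_{\vp^\ve} = -n\sqrt{-1}\de\vp_t^\ve\wedge\dbar\vp_t^\ve\wedge\im(e^{-\sqrt{-1}\hat\theta_0}\Omega_{\vp^\ve}^{n-1}) - 4e^{-2t}\ve^2\im(e^{-\sqrt{-1}\hat\theta_0}\Omega_{\vp^\ve}^n)$ into this interior term causes the $h''\sqrt{-1}\de\vp_t^\ve\wedge\dbar\vp_t^\ve$ contributions to cancel exactly, leaving
\[
\int_0^1 \partial_s E_{p,\delta}^\ve(s,t)\,dt = -\int_X h'(\vp_t^\ve(s,0))\,\psi'(s)\,\mu_{\psi(s)} + 4\ve^2\int_0^1 e^{-2t}\!\int_X v\,h''(\vp_t^\ve)\,\im(e^{-\sqrt{-1}\hat\theta_0}\Omega_{\vp^\ve}^n)\,dt.
\]

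To pass from $\int_0^1\partial_s E\,dt$ to $\partial_s L^\ve(s) = \int_0^1 \frac{1}{p}E^{1/p-1}\partial_s E\,dt$, factor $\frac{1}{p}E(s,0)^{1/p-1}$ outside the integral and estimate the residual. The lower bound $E_{p,\delta}^\ve(s,t)\geq c\delta$ (from $\sqrt{x^{2p}+\delta^2}\geq\delta$ together with cohomological positivity of $\int_X\re(e^{-\sqrt{-1}\hat\theta_0}\Omega^n)$) combined with Lemma~\ref{energy estimates}(i) yields $\sup_t|E(s,t)^{1/p-1}-E(s,0)^{1/p-1}|\leq C\ve^2\delta^{1/p-2}\leq C\ve^2\delta^{-2}$, while the pointwise bound $|\partial_s E(s,t)|\leq C\delta^{-1}$ (using $|h''|\leq C\delta^{-1}$ and the $C^{1,1}$ estimates of Theorem~\ref{ve geodesic existence and estimates}) gives $\int_0^1|\partial_s E|\,dt\leq C\delta^{-1}$; the residual is therefore $O(\ve^2\delta^{-3})$. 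The main boundary term, after multiplication by $\frac{1}{p}E(s,0)^{1/p-1}$, is bounded by H\"older's inequality (exponents $p/(p-1)$ and $p$), $|h'(x)|\leq p|x|^{p-1}$, and $\int_X|\vp_t^\ve|^p\,\mu_{\psi(s)}\leq E(s,0)$:
\[
\biggl|\frac{1}{p}E(s,0)^{1/p-1}\int_X h'(\vp_t^\ve(s,0))\,\psi'(s)\,\mu_{\psi(s)}\biggr| \leq \biggl(\int_X \sqrt{\psi'(s)^{2p}+\delta^2}\,\mu_{\psi(s)}\biggr)^{1/p}.
\]
Integrating $\partial_s L^\ve(s)$ over $[0,s_0]$ rearranges to $L^\ve(0)-L^\ve(s_0)\leq\length_{p,\delta}(\psi)+Cs_0\ve^2\delta^{-3}$, and absorbing $s_0$ into the constant gives the claim.

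The main obstacle is the cancellation step: one must verify that the $h''\sqrt{-1}\de\vp_t^\ve\wedge\dbar\vp_t^\ve$ term arising from $\ddb h(\vp_t^\ve)$ after integration by parts in $x$ matches, with opposite sign, the corresponding term produced by substituting \eqref{ve geodesic eqn 3} into the interior integral after integration by parts in $t$. This identity is a delicate consequence of the specific form of the $\ve$-geodesic equation; without it, the interior contribution to $\int_0^1\partial_s E\,dt$ would be $O(\delta^{-1})$ rather than $O(\ve^2\delta^{-1})$, and the final error in $\partial_s L^\ve(s)$ would be $O(\delta^{-2})$ with no small $\ve^2$ factor, rendering the bound useless as $\ve\to0$.
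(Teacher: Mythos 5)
Your proposal follows the paper's route closely: compute $\partial_s E^\ve_{p,\delta}$ by integrating by parts in $x$ and $t$, substitute the $\ve$-geodesic equation \eqref{ve geodesic eqn 3} so that the $h''\,\sqrt{-1}\de\vp_t^\ve\wedge\dbar\vp_t^\ve$ contributions cancel exactly (this cancellation is correct and is the same mechanism the paper exploits), and then compare the surviving $t=0$ boundary contribution to $\ell'_\delta(s)$ via H\"older/Young. The only real difference is organizational: you factor $\tfrac1p E(s,0)^{1/p-1}$ out of the $t$-integral and treat $\int_0^1[E(s,t)^{1/p-1}-E(s,0)^{1/p-1}]\partial_s E\,dt$ as one residual, whereas the paper keeps $E^{1/p-1}$ inside and integrates by parts in $t$ directly inside $\hat\ell'_\delta(s)$; both routes end at $O(\ve^2\delta^{-3})$. (Your cruder lower bound $E\geq c\delta$ also works in the end, though Lemma~\ref{energy estimates}(ii) together with $\hat\psi\neq\psi(s)$ gives the stronger $E\geq C^{-1}$, which is what the paper uses and leads to tidier intermediate estimates.)

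One point needs attention. The bound $|v|=|\partial_s\vp^\ve|\leq C$ — which you invoke both for $|\partial_s E(s,t)|\leq C\delta^{-1}$ and (implicitly) for controlling the interior $4\ve^2\int_0^1 e^{-2t}\int_X v\,h''\,\Im(\cdot)\,dt$ term — does not follow from the $C^{1,1}$ estimates of Theorem~\ref{ve geodesic existence and estimates}. Those estimates bound derivatives of a single $\ve$-geodesic $\Phi^\ve$ in the variables of $\X=X\times A$; they say nothing about the derivative of the solution with respect to the moving boundary datum $\psi(s)$. The paper gets $|\vp_s|\leq C$ by applying $\partial_s$ to the $\ve$-geodesic equation \eqref{ve geodesic eqn 3} and invoking the maximum principle for the resulting linear equation. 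You should insert that auxiliary step; without it, the $\delta^{-1}$ and $\ve^2\delta^{-3}$ bounds in your argument are not justified.
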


\begin{proof}
Define
\[
\ell_{\delta}(s) = \length_{p,\delta}(\psi|_{[0,s]}), \quad
\hat{\ell}_{\delta}(s) = \length_{p,\delta}(\vp^{\ve}(s,\cdot)).
\]
It suffices to show that
\[
\ell_{\delta}'(s)+\hat{\ell}_{\delta}'(s) \geq -C\ve^{2}\delta^{-3}.
\]
It is clear that
\begin{equation}\label{derivative of l}
\ell_{\delta}'(s)
= E_{p,\delta}^{\frac{1}{p}}(\psi,s)
= \left(\int_{X}\sqrt{\psi_{s}^{2p}+\delta^{2}}\,
\Re(e^{-\sqrt{-1}\hat{\theta}_{0}}\Omega_{\psi}^{n})\right)^{\frac{1}{p}}
\end{equation}
and
\begin{equation}\label{derivative of hat l}
\hat{\ell}_{\delta}'(s) = \frac{1}{p}\int_{0}^{1}\big(E_{\delta}^{\ve}(s,t)\big)^{\frac{1}{p}-1}
\de_{s}E_{\delta}^{\ve}(s,t)dt,
\end{equation}
where
\[
E_{\delta}^{\ve}(s,t) = \int_{X}\sqrt{(\vp_{t}^{\ve})^{2p}+\delta^{2}}\,
\Re(e^{-\sqrt{-1}\hat{\theta}_{0}}\Omega_{\vp^{\ve}}^{n}).
\]
For notational convenience, we omit superscript $\ve$ and subscript $\delta$, i.e.,
\[
\ell = \ell_{\delta}, \ \ \hat{\ell} = \hat{\ell}_{\delta}, \ \
\vp = \vp^{\ve}, \ \ E = E_{\delta}^{\ve}.
\]
We compute
\begin{equation}\label{triangle inequality eqn 1}
\begin{split}
& \frac{\de E(s,t)}{\de s} \\
= {} & p\int_{X}\frac{\vp_{t}^{2p-1}\vp_{ts}}{\sqrt{\vp_{t}^{2p}+\delta^{2}}}
\Re(e^{-\sqrt{-1}\hat{\theta}_{0}}\Omega_{\vp}^{n})
-n\int_{X}\sqrt{\vp_{t}^{2p}+\delta^{2}}\ddb\vp_{s}
\wedge\Im(e^{-\sqrt{-1}\hat{\theta}_{0}}\Omega_{\vp}^{n-1}) \\
= {} & p\frac{\de}{\de t}\left(\int_{X}\frac{\vp_{t}^{2p-1}\vp_{s}}{\sqrt{\vp_{t}^{2p}+\delta^{2}}}
\Re(e^{-\sqrt{-1}\hat{\theta}_{0}}\Omega_{\vp}^{n})\right)
-p\int_{X}\vp_{s}\frac{\de}{\de t}\left(\frac{\vp_{t}^{2p-1}}{\sqrt{\vp_{t}^{2p}+\delta^{2}}}
\Re(e^{-\sqrt{-1}\hat{\theta}_{0}}\Omega_{\vp}^{n})\right) \\
& +np\int_{X}\frac{\vp_{t}^{2p-1}}{\sqrt{\vp_{t}^{2p}+\delta^{2}}}
\sqrt{-1}\de\vp_{t}\wedge\dbar\vp_{s}
\wedge\Im(e^{-\sqrt{-1}\hat{\theta}_{0}}\Omega_{\vp}^{n-1}).
\end{split}
\end{equation}
For the second term in \eqref{triangle inequality eqn 1}, direct calculation shows
\[
\begin{split}
& -p\int_{X}\vp_{s}\frac{\de}{\de t}\left(\frac{\vp_{t}^{2p-1}}{\sqrt{\vp_{t}^{2p}+\delta^{2}}}
\Re(e^{-\sqrt{-1}\hat{\theta}_{0}}\Omega_{\vp}^{n})\right) \\
= {} & -p\int_{X}\frac{(p-1)\vp_{t}^{4p-2}+(2p-1)\delta^{2}\vp_{t}^{2p-2}}
{(\vp_{t}^{2p}+\delta^{2})^{\frac{3}{2}}}\vp_{tt}\vp_{s}
\Re(e^{-\sqrt{-1}\hat{\theta}_{0}}\Omega_{\vp}^{n}) \\
& +np\int_{X}\frac{\vp_{t}^{2p-1}\vp_{s}}{\sqrt{\vp_{t}^{2p}+\delta^{2}}}
\ddb\vp_{t}\wedge\Im(e^{-\sqrt{-1}\hat{\theta}_{0}}\Omega_{\vp}^{n-1}).
\end{split}
\]
For the third term of \eqref{triangle inequality eqn 1}, integrating by parts yields
\[
\begin{split}
& np\int_{X}\frac{\vp_{t}^{2p-1}}{\sqrt{\vp_{t}^{2p}+\delta^{2}}}
\sqrt{-1}\de\vp_{t}\wedge\dbar\vp_{s}
\wedge\Im(e^{-\sqrt{-1}\hat{\theta}}\Omega_{\vp}^{n-1}) \\
= {} & -np\int_{X}\frac{(p-1)\vp_{t}^{4p-2}+(2p-1)\delta^{2}\vp_{t}^{2p-2}}
{(\vp_{t}^{2p}+\delta^{2})^{\frac{3}{2}}}\vp_{s}\sqrt{-1}\de\vp_{t}\wedge\dbar\vp_{t}
\wedge\Im(e^{-\sqrt{-1}\hat{\theta}_{0}}\Omega_{\vp}^{n-1}) \\
& -np\int_{X}\frac{\vp_{t}^{2p-1}\vp_{s}}{\sqrt{\vp_{t}^{2p}+\delta^{2}}}
\ddb\vp_{t}\wedge\Im(e^{-\sqrt{-1}\hat{\theta}_{0}}\Omega_{\vp}^{n-1}).
\end{split}
\]
Substituting the above into \eqref{triangle inequality eqn 1} and using the $\ve$-geodesic equation \eqref{ve geodesic eqn 3},
\[
\begin{split}
\frac{\de E(s,t)}{\de s}
= {} & p\frac{\de}{\de t}\left(\int_{X}\frac{\vp_{t}^{2p-1}\vp_{s}}{\sqrt{\vp_{t}^{2p}+\delta^{2}}}
\Re(e^{-\sqrt{-1}\hat{\theta}_{0}}\Omega_{\vp}^{n})\right) \\
& +4pe^{-2t}\ve^{2}\int_{X}\frac{(p-1)\vp_{t}^{4p-2}+(2p-1)\delta^{2}\vp_{t}^{2p-2}}
{(\vp_{t}^{2p}+\delta^{2})^{\frac{3}{2}}}\vp_{s}
\Im(e^{-\sqrt{-1}\hat{\theta}_{0}}\Omega_{\vp}^{n}).
\end{split}
\]
Applying the operator $\de_{s}$ to \eqref{ve geodesic eqn 3} and using the maximum principle, we obtain $|\vp_{s}|\leq C$. Thus, by the estimate of $\ve$-geodesic from Theorem~\ref{ve geodesic existence and estimates},
\[
\begin{split}
\frac{\de E(s,t)}{\de s}
\geq {} & p\frac{\de}{\de t}
\left(\int_{X}\frac{\vp_{t}^{2p-1}\vp_{s}}{\sqrt{\vp_{t}^{2p}+\delta^{2}}}
\Re(e^{-\sqrt{-1}\hat{\theta}_{0}}\Omega_{\vp}^{n})\right)
-C\ve^{2}\delta^{-3}.
\end{split}
\]
Substituting this into \eqref{derivative of hat l},
\[
\begin{split}
\hat{\ell}'(s) = {} & \frac{1}{p}\int_{0}^{1}E^{\frac{1}{p}-1}(\de_{s}E)dt \\
\geq {} & \left(E^{\frac{1}{p}-1}
\int_{X}\frac{\vp_{t}^{2p-1}\vp_{s}}{\sqrt{\vp_{t}^{2p}+\delta^{2}}}
\Re(e^{-\sqrt{-1}\hat{\theta}_{0}}\Omega_{\vp}^{n})\right)\Bigg|_{t=0}^{t=1} \\
& -\int_{0}^{1}(\de_{t}E^{\frac{1}{p}-1})
\int_{X}\frac{\vp_{t}^{2p-1}\vp_{s}}{\sqrt{\vp_{t}^{2p}+\delta^{2}}}
\Re(e^{-\sqrt{-1}\hat{\theta}_{0}}\Omega_{\vp}^{n})dt
-C\ve^{2}\delta^{-3}\int_{0}^{1}E^{\frac{1}{p}-1}dt.
\end{split}
\]
By the definition of $\vp$, we see that $\vp(s,0)=\psi(s)$ and $\vp(s,1)=\hat{\psi}$, which implies $\vp_{s}(s,0)=\psi_{s}(s)$ and $\vp_{s}(s,1)=0$. On the other hand, Lemma \ref{energy estimates} shows $E\geq C^{-1}$ and $|\de_{t}E^{\frac{1}{p}-1}|\leq C\ve^{2}$ provided that $\ve$ is sufficiently small. Using Theorem~\ref{ve geodesic existence and estimates} and $|\vp_{s}|\leq C$ again,
\[
\begin{split}
\hat{\ell}'(s) \geq {} & -E(s,0)^{\frac{1}{p}-1}\int_{X}\frac{\vp_{t}^{2p-1}\psi_{s}}{\sqrt{\vp_{t}^{2p}+\delta^{2}}}
\Re(e^{-\sqrt{-1}\hat{\theta}_{0}}\Omega_{\psi}^{n})-C\ve^{2}\delta^{-3}.
\end{split}
\]
By Young's inequality,
\[
\int_{X}\frac{\vp_{t}^{2p-1}\psi_{s}}{\sqrt{\vp_{t}^{2p}+\delta^{2}}}
\Re(e^{-\sqrt{-1}\hat{\theta}_{0}}\Omega_{\psi}^{n})
\leq E(s,0)^{1-\frac{1}{p}}
\left(\int_{X}|\psi_{s}|^{p}
\Re(e^{-\sqrt{-1}\hat{\theta}_{0}}\Omega_{\psi}^{n})\right)^{\frac{1}{p}}.
\]
Recalling $\ell_{\delta}'(s)=\left(\int_{X}\sqrt{\psi_{s}^{2p}+\delta^{2}}
\Re(e^{-\sqrt{-1}\hat{\theta}_{0}}\Omega_{\psi}^{n})\right)^{\frac{1}{p}}$ from \eqref{derivative of l}, we obtain
\[
\ell'(s)+\hat{\ell}'(s) \geq -C\ve^{2}\delta^{-3},
\]
as required.
\end{proof}

\begin{lma}\label{H dp lemma}
For any $\vp_{0},\vp_{1}\in\H$, let $\vp^{\ve}$ be the $\ve$-geodesic connecting $\vp_{0}$ and $\vp_{1}$. Then we have
\begin{enumerate}\setlength{\itemsep}{1mm}
\item[(i)] $d_{p}(\vp_{0},\vp_{1}) = \lim_{\ve\rightarrow0}\length_{p}(\vp^{\ve})$;
\item[(ii)] $d_{p}^{p}(\vp_{0},\vp_{1})=\lim_{\ve\rightarrow0}E_{p}^{\ve}(t)$ for any $t\in[0,1]$;
\item[(iii)] The following inequality holds
\[
\begin{split}
d_{p}^{p}(\vp_{0},\vp_{1}) \geq {} & \max \left\{ \int_{\{\vp_{0}>\vp_{1}\}}|\vp_{0}-\vp_{1}|^{p}
\Re(e^{-\sqrt{-1}\theta_{0}}(\alpha_{\vp_{0}}+\sqrt{-1}\omega)^{n}), \right.\\
& \left.\int_{\{\vp_{1}>\vp_{0}\}}|\vp_{0}-\vp_{1}|^{p}
\Re(e^{-\sqrt{-1}\theta_{0}}(\alpha_{\vp_{1}}+\sqrt{-1}\omega)^{n}) \right\}.
\end{split}
\]
\end{enumerate}
\end{lma}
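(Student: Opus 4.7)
The plan is to prove $(i)$ first, then deduce $(ii)$ from it, and finally obtain $(iii)$ by combining Lemma~\ref{energy estimates}(ii), $(ii)$, and Lemma~\ref{Re Im relationships}.

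For $(i)$, the bound $d_{p}(\vp_{0},\vp_{1}) \leq \liminf_{\ve\to 0}\length_{p}(\vp^{\ve})$ is immediate, because each $\ve$-geodesic is itself a smooth path in $\H$ connecting $\vp_{0}$ and $\vp_{1}$, and hence admissible in the infimum defining $d_{p}$. For the reverse inequality I would use the approximate triangle inequality. Given any smooth competitor $\eta:[0,1]\to\H$ from $\vp_{0}$ to $\vp_{1}$, perturb its endpoint, obtaining $\tilde\eta$ with $\tilde\eta(0)=\vp_{0}$ and $\tilde\eta(1) = \vp_{1}+\tau$ for a small constant $\tau>0$, using the invariance of $\H$ under addition of constants together with a small generic perturbation to arrange $\vp_{1}\notin\tilde\eta([0,1])$. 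Denoting by $\eta^{\ve}$ the auxiliary $\ve$-geodesic from $\vp_{1}+\tau$ to $\vp_{1}$, Lemma~\ref{triangle inequality} applied with $\psi=\tilde\eta$ and $\hat\psi=\vp_{1}$ gives
\[
\length_{p,\delta}(\vp^{\ve}) \leq \length_{p,\delta}(\tilde\eta) + \length_{p,\delta}(\eta^{\ve}) + C\ve^{2}\delta^{-3}.
\]
By Lemma~\ref{ve geodesic more estimates}(ii) applied to $\eta^{\ve}$, one has $|\de_{t}\eta^{\ve}|\leq \tau+C\ve^{2}$, so $\length_{p,\delta}(\eta^{\ve}) \leq C(\tau+\ve^{2})+C\delta^{1/p}$. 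Sending $\ve\to 0$ first (to kill $C\ve^{2}\delta^{-3}$), then $\tau\to 0$, and finally $\delta\to 0$, with \eqref{energies lengths differences} converting between $\length_{p,\delta}$ and $\length_{p}$, yields $\limsup_{\ve\to 0}\length_{p}(\vp^{\ve}) \leq \length_{p}(\eta)$. Taking the infimum over $\eta$ completes $(i)$.

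For $(ii)$, I would combine Lemma~\ref{energy estimates}(i) with \eqref{energies lengths differences} (letting $\delta\to 0$) to obtain $|E_{p}^{\ve}(t_{1})-E_{p}^{\ve}(t_{2})|\leq C\ve^{2}$ uniformly in $t_{1},t_{2}\in[0,1]$. Hence $E_{p}^{\ve}(t)^{1/p} = E_{p}^{\ve}(t_{0})^{1/p}+o(1)$ as $\ve\to 0$, uniformly in $t$, and integrating in $t$ and invoking $(i)$ identifies $\lim_{\ve\to 0}E_{p}^{\ve}(t_{0}) = d_{p}^{p}(\vp_{0},\vp_{1})$ for every $t_{0}\in[0,1]$. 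For $(iii)$, Lemma~\ref{energy estimates}(ii) evaluated at $t=0$ bounds $E_{p,\delta}^{\ve}(0)$ from below by the claimed maximum minus $C\ve^{2}$; letting $\ve\to 0$ and then $\delta\to 0$, the use of \eqref{energies lengths differences} and $(ii)$ identifies the left-hand limit with $d_{p}^{p}(\vp_{0},\vp_{1})$, while Lemma~\ref{Re Im relationships} rewrites $\Re(e^{-\sqrt{-1}\hat\theta_{0}}\Omega_{\vp}^{n})$ as $\Re(e^{-\sqrt{-1}\theta_{0}}(\alpha_{\vp}+\sqrt{-1}\omega)^{n})$, producing the stated inequality.

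The main technical obstacle I anticipate is the scheduling of the three limits $\ve\to 0$, $\tau\to 0$, $\delta\to 0$ in the proof of $(i)$. The ordering is forced: the $C\ve^{2}\delta^{-3}$ error from Lemma~\ref{triangle inequality} blows up as $\delta\to 0$, so $\ve$ must vanish first; once it does, Lemma~\ref{ve geodesic more estimates}(ii) controls the auxiliary $\ve$-geodesic length linearly in $\tau$ and uniformly in $\delta$, making $\tau\to 0$ safe; only then can $\delta\to 0$. A secondary caveat is that the constant $C$ in Lemma~\ref{triangle inequality} depends on $\tilde\eta$ and $\hat\psi=\vp_{1}$, hence implicitly on $\tau$; since $\tilde\eta\to\eta$ smoothly as $\tau\to 0$, this dependence remains uniform for $\tau$ in a small fixed interval.
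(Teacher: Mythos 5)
Your proposal is correct and takes essentially the same route as the paper: the easy inequality from admissibility of the $\ve$-geodesics, the reverse inequality from the approximate triangle inequality (Lemma~\ref{triangle inequality}) plus a short auxiliary $\ve$-geodesic controlled by Lemma~\ref{ve geodesic more estimates}(ii), and then parts (ii)--(iii) deduced from Lemma~\ref{energy estimates}, \eqref{energies lengths differences}, and Lemma~\ref{Re Im relationships} exactly as you describe. The only (cosmetic) difference is in how the degeneracy $\hat\psi\in\psi([0,s_0])$ is avoided in (i): the paper first reduces without loss of generality to $\vp_1\notin\psi([0,1))$ and then applies Lemma~\ref{triangle inequality} on $[0,s_0]$ with $s_0\to 1$, which makes the non-degeneracy hypothesis automatic, whereas your shift of the endpoint to $\vp_1+\tau$ requires the somewhat informal ``small generic perturbation'' to keep $\vp_1$ off $\tilde\eta([0,1])$ and an extra check that $\length_p(\tilde\eta)\to\length_p(\eta)$ as $\tau\to 0$.
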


\begin{proof}
(ii) and (iii) are immediate corollaries of (i), Lemma \ref{energy estimates}, \eqref{energies lengths differences} and Lemma \ref{Re Im relationships}. It remains to prove (i). By the definition of $d_{p}$,
\[
d_{p}(\vp_{0},\vp_{1}) \leq \liminf_{\ve\rightarrow0}\length_{p}(\vp^{\ve}).
\]
It suffices to prove
\[
\limsup_{\ve\rightarrow0}\length_{p}(\vp^{\ve})
\leq d_{p}(\vp_{0},\vp_{1}).
\]

Let $\psi(s)$, $s\in[0,1]$ be any smooth path connecting $\vp_{0}$ and $\vp_{1}$. We assume without loss of generality that $\vp_{1}\notin\psi([0,1))$. For each $s\in[0,1)$, let $\vp^{\ve}(s,t)$ be the $\ve$-geodesic connecting $\psi(s)$ to $\vp_{1}$. For any $s_{0}\in(0,1)$, using Lemma \ref{triangle inequality},
\[
\length_{p,\delta}(\vp^{\ve}(0,\cdot))
\leq \length_{p,\delta}(\psi|_{[0,s_{0}]})
+\length_{p,\delta}(\vp^{\ve}(s_{0},\cdot))
+C\ve^{2}\delta^{-3}.
\]
Combining this with \eqref{energies lengths differences} and $\vp^{\ve}(0,\cdot)=\vp^{\ve}$, we obtain
\[
\length_{p}(\vp^{\ve})
\leq \length_{p}(\psi|_{[0,s_{0}]})
+\length_{p}(\vp^{\ve}(s_{0},\cdot))
+C\delta^{\frac{1}{p}}+C\ve^{2}\delta^{-3}.
\]
By Lemma \ref{ve geodesic more estimates} (ii),
\[
\length_{p}(\vp^{\ve}(s_{0},\cdot))
\leq C\|\psi(s_{0})-\psi(1)\|_{L^{\infty}}+C\ve^{2}.
\]
Hence,
\[
\limsup_{\ve\rightarrow0}\length_{p}(\vp^{\ve})
\leq \length_{p}(\psi|_{[0,s_{0}]})+C\|\psi(s_{0})-\psi(1)\|_{L^{\infty}}+C\delta^{\frac{1}{p}}.
\]
By letting $\delta\rightarrow0$ and $s_{0}\rightarrow1$,
\[
\limsup_{\ve\rightarrow0}\length_{p}(\vp^{\ve})
\leq \length_{p}(\psi).
\]
Since $\psi$ is arbitrary path connecting $\vp_{0}$ and $\vp_{1}$,
\[
\limsup_{\ve\rightarrow0}\length_{p}(\vp^{\ve})
\leq d_{p}(\vp_{0},\vp_{1}),
\]
as required.
\end{proof}

Now we are in a position to prove Theorem \ref{H dp theorem}.

\begin{proof}[Proof of Theorem \ref{H dp theorem}]
The non-negativity and symmetry of $d_{p}$ are trivial.  The triangle inequality follows from Lemma \ref{triangle inequality}, \eqref{energies lengths differences} and Lemma \ref{H dp lemma} (i). The positivity of $d_{p}$ follows from Lemma \ref{H dp lemma} (iii).
\end{proof}
\bigskip

For later purpose, we will derive some inequalities on the $d_{p}$-distance.

\begin{lma}\label{dp distance inequality 1}
For any $\vp_{0},\vp_{1},\vp_{2}\in\H$ such that $\vp_{0}\leq\vp_{1}\leq\vp_{2}$, we have
\[
d_{p}^{p}(\vp_{0},\vp_{1})
\leq d_{p}^{p}(\vp_{0},\vp_{2})
\leq \int_{X}|\vp_{0}-\vp_{2}|^{p}
\Re(e^{-\sqrt{-1}\theta_{0}}(\alpha_{\vp_{0}}+\sqrt{-1}\omega)^{n}).
\]
\end{lma}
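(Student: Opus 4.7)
The plan is to reduce both inequalities to pointwise comparisons of $|\vp_t^\ve(\cdot, 0)|$ along the appropriate $\ve$-geodesics, then pass to the limit. By Lemma~\ref{H dp lemma}(ii) applied at $t = 0$ together with Lemma~\ref{Re Im relationships}, for any $\vp,\psi \in \H$ connected by the $\ve$-geodesic $\vp^\ve$,
\[
d_p^p(\vp,\psi) \;=\; \lim_{\ve \to 0}\int_X |\vp_t^\ve(\cdot, 0)|^p \, \Re\big(e^{-\sqrt{-1}\theta_0}(\alpha_\vp + \sqrt{-1}\omega)^n\big).
\]
With $\vp = \vp_0$ fixed, both distances in the lemma share the same volume form $\Re(e^{-\sqrt{-1}\theta_0}(\alpha_{\vp_0}+\sqrt{-1}\omega)^n)$, so it suffices to control $\vp_t^\ve(\cdot, 0)$ pointwise up to an $o(1)$ error as $\ve \to 0$.

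The key input is a pointwise lower bound $\vp_t^\ve(x,0) \geq -C\ve^2$ valid for any $\ve$-geodesic starting at $\vp_0$ and ending at some $\psi \geq \vp_0$. I obtain it from a barrier. Set $\Phi^-(x,z) = \vp_0(x) + A(|z|^2 - 1)$ on $\X$; a direct eigenvalue computation against $\hat{\omega}_\ve = \pi^*\omega + \ve^2\sqrt{-1}\,dz\wedge d\bar z$ gives
\[
\hat{Q}_{\hat{\omega}_{\ve}}\big(\pi^*\alpha + \sqrt{-1}\,D\bar{D}\,\Phi^-\big) \;=\; \hat{Q}_\omega(\alpha_{\vp_0}) + \arctan(A/\ve^2).
\]
Since $\vp_0 \in \H$ and $X$ is compact, $\hat{\theta}_0 - \hat{Q}_\omega(\alpha_{\vp_0}) \leq \pi/2 - \delta_0$ uniformly on $X$ for some $\delta_0>0$ depending only on $\vp_0$; choosing $A = \ve^2 \cot(\delta_0) = O(\ve^2)$ forces the right-hand side to be $\geq \hat{\theta}_0$, so $\Phi^-$ is a sub-solution of \eqref{ve geodesic eqn 2}. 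On $\partial\X$, $\Phi^- = \vp_0 = \Phi^\ve$ at $\{|z|=1\}$ and $\Phi^- \leq \vp_0 \leq \psi = \Phi^\ve$ at $\{|z|=e^{-1}\}$. Since $\hat{Q}_{\hat{\omega}_{\ve}}$ is increasing in its Hermitian argument, the maximum principle yields $\Phi^- \leq \Phi^\ve$ throughout $\X$, and differentiating in $t$ at $t = 0$ (where they agree) gives $\vp_t^\ve(x, 0) \geq -2A = -C\ve^2$.

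For the second inequality, let $\vp^\ve$ be the $\ve$-geodesic from $\vp_0$ to $\vp_2$. Lemma~\ref{ve geodesic more estimates}(i) provides $\vp_{tt}^\ve \geq -C\ve^2$; integrating this together with $\int_0^1 \vp_t^\ve\,dt = \vp_2 - \vp_0 \geq 0$ yields the matching upper bound $\vp_t^\ve(x,0) \leq (\vp_2 - \vp_0) + C\ve^2$. Combined with the sub-solution lower bound, $|\vp_t^\ve(x,0)| \leq (\vp_2 - \vp_0) + C\ve^2$ pointwise on $X$; taking $p$-th powers, integrating against $\Re(e^{-\sqrt{-1}\theta_0}(\alpha_{\vp_0}+\sqrt{-1}\omega)^n)$, and letting $\ve \to 0$ delivers the claimed upper bound on $d_p^p(\vp_0,\vp_2)$.

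For the first inequality, let $\vp^{\ve,i}$ be the $\ve$-geodesic from $\vp_0$ to $\vp_i$ for $i = 1,2$. The lifts $\Phi^{\ve,i}$ solve the same equation $\hat{Q}_{\hat{\omega}_{\ve}} = \hat{\theta}_0$ on $\X$ with identical data at $\{|z|=1\}$ and ordered data $\vp_1 \leq \vp_2$ at $\{|z|=e^{-1}\}$, so the maximum principle gives $\Phi^{\ve,1} \leq \Phi^{\ve,2}$ on $\X$; since they agree at $t = 0$, differentiating yields $\vp_t^{\ve,1}(x,0) \leq \vp_t^{\ve,2}(x,0)$. Combined with the lower bound $\vp_t^{\ve,i}(x,0) \geq -C\ve^2$, a short case split finishes the pointwise comparison: if $\vp_t^{\ve,1}(x,0) \geq 0$ then $|\vp_t^{\ve,1}(x,0)| = \vp_t^{\ve,1}(x,0) \leq \vp_t^{\ve,2}(x,0) \leq |\vp_t^{\ve,2}(x,0)|$; otherwise $|\vp_t^{\ve,1}(x,0)| \leq C\ve^2 \leq |\vp_t^{\ve,2}(x,0)| + C\ve^2$. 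In either case $|\vp_t^{\ve,1}(x,0)| \leq |\vp_t^{\ve,2}(x,0)| + C\ve^2$, and passing $\ve \to 0$ in the integral yields $d_p^p(\vp_0,\vp_1) \leq d_p^p(\vp_0,\vp_2)$. The hard part is the barrier construction and the maximum principle for the fully nonlinear equation $\hat{Q}_{\hat{\omega}_{\ve}} = \hat{\theta}_0$; both rely on $\vp_0$ sitting inside the open set $\H$ and on $\hat{Q}$ being monotone in its Hermitian matrix variable.
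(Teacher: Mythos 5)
Your proposal is correct and takes essentially the same approach as the paper: compare the $\ve$-geodesics $\Phi^{\ve,1}$ and $\Phi^{\ve,2}$ via the maximum principle, bound $\vp_t^\ve(\cdot,0)$ above and below by $O(\ve^2)$ errors, then pass to the limit using Lemma~\ref{H dp lemma}(ii). The only cosmetic difference is that you derive the lower bound $\vp_t^\ve(\cdot,0)\geq -C\ve^2$ from an explicit barrier $\Phi^-=\vp_0+A(|z|^2-1)$, whereas the paper obtains the same bound by comparing with a third $\ve$-geodesic $\Phi^{\ve,0}$ from $\vp_0$ to itself and invoking Lemma~\ref{ve geodesic more estimates}(ii); your barrier is in effect a direct re-derivation of that cited estimate.
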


\begin{proof}
For $i=0,1,2$, let $\vp^{\ve,i}$ and $\Phi^{\ve,i}$ be the $\ve$-geodesic joining from $\vp_{0}$ to $\vp_{i}$. Denote the corresponding $L^{p}$-energy by $E_{p}^{\ve,i}$. By comparison principle,
\[
\Phi^{\ve,0} \leq \Phi^{\ve,1} \leq \Phi^{\ve,2}
\]
and so
\[
\vp^{\ve,0} \leq \vp^{\ve,1} \leq \vp^{\ve,2}.
\]
This implies
\[
\vp^{\ve,0}_{t}(0) \leq \vp^{\ve,1}_{t}(0) \leq \vp^{\ve,2}_{t}(0).
\]
Thanks to Lemma \ref{ve geodesic more estimates},
\[
\vp^{\ve,2}_{t}(0) \leq \vp_{2}-\vp_{0}+C\ve^{2}, \quad
|\vp^{\ve,0}_{t}(0)| \leq C\ve^{2}.
\]
It then follows that
\[
-C\ve^{2} \leq \vp^{\ve,0}_{t}(0)
\leq \vp^{\ve,1}_{t}(0) \leq \vp^{\ve,2}_{t}(0)
\leq \vp_{2}-\vp_{0}+C\ve^{2}
\]
and
\[
E_{p}^{\ve,1}(0) \leq E_{p}^{\ve,2}(0)+C\ve^{2p}
\leq \int_{X}|\vp_{0}-\vp_{2}|^{p}
\Re(e^{-\sqrt{-1}\hat{\theta}_{0}}\Omega_{\vp_{0}}^{n})+C\ve^{2p}.
\]
Letting $\ve\rightarrow0$ and using Lemma \ref{H dp lemma} (ii), we obtain
\[
d_{p}^{p}(\vp_{0},\vp_{1})
\leq d_{p}^{p}(\vp_{0},\vp_{2})
\leq \int_{X}|\vp_{0}-\vp_{2}|^{p}\Re(e^{-\sqrt{-1}\hat{\theta}_{0}}\Omega_{\vp_{0}}^{n}).
\]
Recalling the definition of $\Omega_{\vp_{0}}$ (see \eqref{Omega vp}) and using Lemma \ref{Re Im relationships},
\[
\Re(e^{-\sqrt{-1}\hat{\theta}_{0}}\Omega_{\vp_{0}}^{n})
= \Re(e^{-\sqrt{-1}\hat{\theta}_{0}}(\omega+\sqrt{-1}\alpha_{\vp_{0}})^{n})
= \Re(e^{-\sqrt{-1}\theta_{0}}(\alpha_{\vp_{0}}+\sqrt{-1}\omega)^{n}).
\]
Then we obtain the required inequality.
\end{proof}

\begin{lma}\label{dp distance inequality 2}
For $c_{0}\in(0,\Theta_{0})$, there exists $C(c_{0},\alpha,\omega,X)$ such that
for any $\vp\in\H$ with $\sup_{X}\vp=0$ and $Q_{\omega}(\alpha_{\vp})\leq\Theta_{0}-c_{0}$,
\[
d_{p}^{p}(\vp,0) \geq \frac{1}{C}\int_{X}|\vp|^{p}\chi_{\vp}^{n}.
\]
where $\Theta_{0}=\theta_{0}+\frac{\pi}{2}$ and $\chi,\chi_{\vp}$ are defined in Lemma \ref{quasi-plurisubharmonic}.
\end{lma}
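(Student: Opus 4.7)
My plan is to first reduce the desired inequality to one involving the volume form $\Re(e^{-\sqrt{-1}\theta_{0}}(\alpha_{\vp}+\sqrt{-1}\omega)^{n})$ via a pointwise comparison, and then exploit the preceding distance lemmas applied to a carefully perturbed potential. First I would establish the pointwise bound $\chi_\vp^n \leq C_0(c_0,\theta_0)\,\Re(e^{-\sqrt{-1}\theta_0}(\alpha_\vp+\sqrt{-1}\omega)^n)$. With $\lambda_i$ denoting the eigenvalues of $\alpha_\vp$ with respect to $\omega$, a trigonometric calculation (using $\tan\theta_0+\lambda_i = \sqrt{1+\lambda_i^2}\cos(\theta_0-\arccot\lambda_i)/\cos\theta_0$) gives
\[
\frac{\chi_\vp^n}{\Re(e^{-\sqrt{-1}\theta_0}(\alpha_\vp+\sqrt{-1}\omega)^n)}=\frac{\prod_i \cos(\theta_0-\arccot(\lambda_i))}{\cos^n(\theta_0)\,\cos(Q_\omega(\alpha_\vp)-\theta_0)}.
\]
The hypothesis $Q_\omega(\alpha_\vp)\leq\Theta_0-c_0$ forces each $\arccot(\lambda_i)\leq\Theta_0-c_0$, so that $\theta_0-\arccot(\lambda_i)\in(c_0-\tfrac{\pi}{2},\theta_0)$ and $Q_\omega(\alpha_\vp)-\theta_0\in(-\theta_0,\tfrac{\pi}{2}-c_0)$; both cosines then stay uniformly away from zero, yielding the constant $C_0$. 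Thus it suffices to prove
\[
d_p^p(\vp,0)\geq C^{-1}\int_X |\vp|^p\,\Re(e^{-\sqrt{-1}\theta_0}(\alpha_\vp+\sqrt{-1}\omega)^n).
\]

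For the main estimate, I would introduce the perturbed potential $\tilde\vp:=(1-c)\vp$ for $c\in(0,1)$ to be chosen. Because $\vp\leq 0$, we have $\vp\leq\tilde\vp\leq 0$. Convexity of $\overline\Gamma_{\omega,0,\Theta_0}$ (applied to $\alpha,\alpha_\vp\in\Gamma_{\omega,0,\Theta_0}$) ensures $\tilde\vp\in\H$ with $\alpha_{\tilde\vp}=c\alpha+(1-c)\alpha_\vp$. Applying Lemma~\ref{H dp lemma}(iii) to the pair $(\vp,\tilde\vp)$ on the set $\{\vp<0\}=\{\tilde\vp>\vp\}$ yields
\[
d_p^p(\vp,\tilde\vp)\geq c^p\int_X|\vp|^p\,\Re\bigl(e^{-\sqrt{-1}\theta_0}((c\alpha+(1-c)\alpha_\vp)+\sqrt{-1}\omega)^n\bigr),
\]
while Lemma~\ref{dp distance inequality 1} applied to the chain $\vp\leq\tilde\vp\leq 0$ gives $d_p^p(\vp,\tilde\vp)\leq d_p^p(\vp,0)$. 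Chaining these two inequalities furnishes a one-parameter family of lower bounds for $d_p^p(\vp,0)$ indexed by $c\in(0,1)$.

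The main obstacle is to pass from the ``interpolated'' form $\Re(e^{-\sqrt{-1}\theta_0}(c\alpha+(1-c)\alpha_\vp+\sqrt{-1}\omega)^n)$ back to $\Re(e^{-\sqrt{-1}\theta_0}(\alpha_\vp+\sqrt{-1}\omega)^n)$, uniformly in $\vp$ satisfying the hypothesis. Since $0\in\H$ is an interior point there is $c_\alpha>0$ with $\alpha\in\Gamma_{\omega,0,\Theta_0-c_\alpha}$; combining this with the concavity of $\cot(Q_\omega(\cdot))$ on $\overline\Gamma_{\omega,0,\Theta_0}$ (recalled in the Preliminaries), one shows $Q_\omega(c\alpha+(1-c)\alpha_\vp)\leq\Theta_0-\min(c_0,c_\alpha)$ uniformly in $c\in[0,1]$. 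A careful eigenvalue comparison---complicated by the non-commutativity of $\alpha$ and $\alpha_\vp$ and by the possible unboundedness of the eigenvalues of $\alpha_\vp$---then gives, for a suitable $c=c(c_0,\theta_0,n,p,\alpha,\omega)$, the pointwise inequality
\[
c^p\,\Re\bigl(e^{-\sqrt{-1}\theta_0}((c\alpha+(1-c)\alpha_\vp)+\sqrt{-1}\omega)^n\bigr)\geq C^{-1}\Re\bigl(e^{-\sqrt{-1}\theta_0}(\alpha_\vp+\sqrt{-1}\omega)^n\bigr),
\]
the key point being that the ratio $\prod_i\sqrt{(1+\mu_i^2)/(1+\lambda_i^2)}$ (where $\mu_i$ are the eigenvalues of the interpolated matrix) remains bounded below by a power of $(1-c)$ in the large-eigenvalue regime of $\alpha_\vp$. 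Combined with the first step and the inequality $d_p^p(\vp,0)\geq d_p^p(\vp,\tilde\vp)$ from Lemma~\ref{dp distance inequality 1}, this completes the proof.
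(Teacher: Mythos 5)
Your proposal is correct and follows essentially the same route as the paper: perturb $\vp$ to $(1-c)\vp$ with $\vp\leq(1-c)\vp\leq 0$, apply Lemma~\ref{dp distance inequality 1} to drop to $d_p^p(\vp,(1-c)\vp)$, apply Lemma~\ref{H dp lemma}(iii) to get the volume form at the intermediate potential, and then control that volume form from below using the concavity of $\cot Q_\omega$ (to keep $Q_\omega(\alpha_{(1-c)\vp})$ away from $\Theta_0$) together with Weyl's inequality for the eigenvalues. The paper just fixes $c=\tfrac12$ outright and bounds $\chi_\vp^n\leq C\prod_i\sqrt{1+\lambda_i^2}\leq C\,\Re(e^{-\sqrt{-1}\theta_0}(\alpha_{\vp/2}+\sqrt{-1}\omega)^n)$ directly, whereas you insert the extra pivot quantity $\Re(e^{-\sqrt{-1}\theta_0}(\alpha_{\vp}+\sqrt{-1}\omega)^n)$; the two are interchangeable because under the standing hypothesis $Q_\omega(\alpha_\vp)\in(0,\Theta_0-c_0)$ the factor $\cos(Q_\omega(\alpha_\vp)-\theta_0)$ is bounded above and below, so passing through $\Re(\cdot)_\vp$ adds a step without changing the content.
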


\begin{proof}
By Lemma \ref{dp distance inequality 1} and \ref{H dp lemma} (iii),
\begin{equation}\label{dp distance inequality 2 eqn 1}
\begin{split}
d_{p}^{p}(\vp,0) \geq {} & d_{p}^{p}\left(\vp,\frac{\vp}{2}\right)
\geq \frac{1}{2^{p}}\int_{X}|\vp|^{p}
\Re(e^{-\sqrt{-1}\hat{\theta}_{0}}\Omega_{\frac{\vp}{2}}^{n}) \\
= {} & \frac{1}{2^{p}}\int_{X}|\vp|^{p}
\Re\left(e^{-\sqrt{-1}\theta_{0}}(\alpha_{\frac{\vp}{2}}+\sqrt{-1}\omega)^{n}\right),
\end{split}
\end{equation}
where we used Lemma \ref{Re Im relationships} in the last equality.

Let $\lambda_{i}$ and $\mu_{i}$ be the eigenvalues of $\alpha_{\vp}$ and $\alpha_{\frac{\vp}{2}}$ with respect to $\omega$. It is clear that
\[
\alpha_{\frac{\vp}{2}} = \frac{1}{2}\alpha+\frac{1}{2}\alpha_{\vp}.
\]
Thanks to the concavity of $\cot(Q_{\omega})$, $Q_{\omega}(\alpha)\leq\Theta_{0}-c_{0}'$ and $Q_{\omega}(\alpha_{\vp})\leq\Theta_{0}-c_{0}$,
\[
Q_{\omega}(\alpha_{\frac{\vp}{2}})\leq\Theta_{0}-c_{0}''.
\]
On the other hand, by Weyl's inequality, for each $i$,
\[
\left|\mu_{i}-\frac{\lambda_{i}}{2}\right| \leq C.
\]
Then we have
\[
\frac{\Re(e^{-\sqrt{-1}\theta_{0}}(\alpha_{\frac{\vp}{2}}+\sqrt{-1}\omega)^{n})}{\omega^{n}}
\geq \cos(Q_{\omega}(\alpha_{\frac{\vp}{2}})-\theta_{0})\prod_{i=1}^{n}\sqrt{1+\mu_{i}^{2}}
\geq \frac{1}{C}\prod_{i=1}^{n}\sqrt{1+\lambda_{i}^{2}}.
\]
Recalling $\chi_{\vp}=\tan(\theta_{0})\omega+\alpha_{\vp}$,
\[
\frac{\chi_{\vp}^{n}}{\omega^{n}}
= \prod_{i=1}^{n}(\tan(\theta_{0})+\lambda_{i})
\leq C\prod_{i=1}^{n}\sqrt{1+\lambda_{i}^{2}}
\leq C\cdot\frac{\Re(e^{-\sqrt{-1}\theta_{0}}(\alpha_{\frac{\vp}{2}}+\sqrt{-1}\omega)^{n})}{\omega^{n}}.
\]
Substituting this into \eqref{dp distance inequality 2 eqn 1}, we obtain
\[
d_{p}^{p}(\vp,0) \geq \frac{1}{C}\int_{X}|\vp|^{p}\chi_{\vp}^{n},
\]
as required.
\end{proof}

\section{Twisted dHYM flow}\label{Section:dHYM flow}

In this section, we will study the twisted dHYM flow starting from $\varphi_0\in \H$:
\begin{equation}\label{twisted dHYM flow 2}
\left\{
\begin{array}{ll}
\partial_t \varphi&=F_{\omega,\ve}(\a_\varphi)-\cot\theta_0-a_0\ve ;\\[1mm]
\varphi(0)&=\varphi_0.
\end{array}
\right.
\end{equation}
Recall that the twisted dHYM flow admits an unique short-time solution on $X$ thanks to Lemma~\ref{F properties} and \eqref{derivatives of F}. In fact, the linearised operator is given by $$\Box_L=\partial_t- F^{i\bar j}_{\omega,\ve}\nabla_i \nabla_{\bar j}$$ which is parabolic as long as the eigenvalues of $\a_\varphi$ with respect to $\omega$ lies inside  $\Gamma_{\theta,\Theta}$. We let $T_{\max}>0$ be the maximal existence time of \eqref{twisted dHYM flow 2}. In this section, we will study its long-time existence and behaviour. We first show that the $\Box_L$ will remain parabolic as long as the flow exists.

\begin{lma}\label{lemma:parabolic-of-L}
There are $\ve_0(\varphi_0,\a,\omega,X), c_0(\varphi_0,\a,\omega,X)>0$ such that if $\ve<\ve_0$, then for all $t\in [0,T_{\max})$, %\in \Gamma_{\omega,\Theta-c_0,\Theta}.
$$c_0\leq Q_\omega(\a_\varphi)\leq \Theta_0-c_0.$$
In particular, $\a_\varphi\in \Gamma_{\omega,\Theta_{0}-c_0,\Theta_{0}}$ and $\a_\varphi>-(\tan\theta_0) \omega$ for all $t\in [0,T_{\max})$.
\end{lma}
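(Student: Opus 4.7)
The plan is to apply the parabolic maximum principle to $u:=\partial_t\varphi$ and then translate the resulting two-sided bounds on $F_{\omega,\ve}(\alpha_\varphi)$ into bounds on $Q_\omega(\alpha_\varphi)$ via the explicit identity
\[
F_{\omega,\ve}(\alpha_\varphi) \;=\; \cot Q_\omega(\alpha_\varphi) \;+\; \frac{\ve}{\sin Q_\omega(\alpha_\varphi)\,\prod_{k=1}^{n}\sqrt{1+\lambda_k^2}},
\]
which comes from writing $\prod_k(\lambda_k+\sqrt{-1})=\prod_k\sqrt{1+\lambda_k^2}\cdot e^{\sqrt{-1}Q_\omega(\alpha_\varphi)}$ together with the definition of $F_{\omega,\ve}$. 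Differentiating \eqref{twisted dHYM flow 2} in $t$ yields the linear equation
\[
\partial_t u \;=\; F_{\omega,\ve}^{i\bar j}(\alpha_\varphi)\,\nabla_i\nabla_{\bar j} u,
\]
which has no zeroth order term; ellipticity on $[0,T_{\max})$ is furnished by Lemma~\ref{F properties}(i). The parabolic maximum principle therefore gives $\min_X u(0)\le u(x,t)\le\max_X u(0)$ for all $(x,t)\in X\times[0,T_{\max})$.

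To estimate $u(0)$, I would use that $\varphi_0\in\H$ together with the compactness of $X$ to fix $c'=c'(\varphi_0)>0$ with $Q_\omega(\alpha_{\varphi_0})\in[c',\Theta_0-c']$ pointwise. The identity then gives $F_{\omega,\ve}(\alpha_{\varphi_0})=\cot Q_\omega(\alpha_{\varphi_0})+O(\ve)$, and the key observation is that the pointwise minimum of $\cot Q_\omega(\alpha_{\varphi_0})-\cot\theta_0$ is at least $-\tan(\theta_0-c')-\cot\theta_0$, which exceeds $-\tan\theta_0-\cot\theta_0$ by the strict gap $\gamma_0:=\tan\theta_0-\tan(\theta_0-c')>0$. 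Shrinking $\ve_0$ to absorb the $O(\ve)$ error produces constants $\gamma,M_0>0$ depending only on $\varphi_0,\alpha,\omega,X$ such that
\[
-\tan\theta_0+\gamma \;\le\; F_{\omega,\ve}(\alpha_\varphi) \;\le\; M_0 \quad\text{on }X\times[0,T_{\max}).
\]
The lower bound on $Q_\omega$ then drops out immediately: since the $\ve$-correction in the identity is nonnegative on the parabolic regime, $\cot Q_\omega(\alpha_\varphi)\le F_{\omega,\ve}(\alpha_\varphi)\le M_0$, and strict monotonicity of $\cot$ on $(0,\pi)$ forces $Q_\omega(\alpha_\varphi)\ge\arccot(M_0)=:c_{1}>0$.

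The main step is the upper bound on $Q_\omega$. The obstacle is that the correction $R:=\ve/\bigl(\sin Q_\omega\prod_{k}\sqrt{1+\lambda_k^2}\bigr)$ is controlled only once $\sin Q_\omega$ is uniformly positive, and this requires $Q_\omega$ to be bounded away from $\pi$, which we do not yet have; I would break the circularity with a continuity argument. On the range $[c_{1},\Theta_0]$ one has $\sin Q_\omega\ge s_0:=\min(\sin c_{1},\cos\theta_0)>0$, so at any $(x,t)$ with $Q_\omega(\alpha_\varphi)(x,t)\in[c_{1},\Theta_0]$ the identity and the lower bound on $F_{\omega,\ve}$ give $\cot Q_\omega(\alpha_\varphi)\ge-\tan\theta_0+\gamma-\ve/s_0\ge-\tan\theta_0+\gamma/2$ provided $\ve<\gamma s_0/2$; the strict monotonicity of $\cot$ then yields the quantitative bound $Q_\omega(\alpha_\varphi)\le\Theta_0-c_{2}$ with $c_{2}:=\Theta_0-\arccot(-\tan\theta_0+\gamma/2)>0$. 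Setting $\tilde c:=\min(c_{2}/2,c'/2)$ and
\[
T^{*}\;:=\;\sup\bigl\{\,t\in[0,T_{\max}):Q_\omega(\alpha_\varphi)(\cdot,s)\le\Theta_0-\tilde c\text{ for all }s\in[0,t]\,\bigr\},
\]
continuity gives $T^{*}>0$, and on $[0,T^{*}]$ the preceding inequality sharpens to the strict bound $Q_\omega(\alpha_\varphi)\le\Theta_0-c_{2}<\Theta_0-\tilde c$, ruling out $T^{*}<T_{\max}$ and delivering $Q_\omega(\alpha_\varphi)\le\Theta_0-c_{2}$ throughout.

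Setting $c_0:=\min(c_{1},c_{2})$ produces the desired sandwich $c_0\le Q_\omega(\alpha_\varphi)\le\Theta_0-c_0$. The remaining ``in particular'' assertions are then immediate: the identity $P_\omega(\alpha_\varphi)=Q_\omega(\alpha_\varphi)-\arccot(\lambda_{\max})<Q_\omega(\alpha_\varphi)\le\Theta_0-c_0$ gives $\alpha_\varphi\in\Gamma_{\omega,\Theta_0-c_0,\Theta_0}$; and $\arccot(\lambda_{\min})\le Q_\omega(\alpha_\varphi)\le\Theta_0-c_0<\Theta_0=\arccot(-\tan\theta_0)$ combined with the strict monotonicity of $\arccot$ gives $\lambda_{\min}>-\tan\theta_0$, i.e.\ $\alpha_\varphi>-(\tan\theta_0)\omega$.
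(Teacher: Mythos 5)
Your proposal is correct and takes essentially the same approach as the paper: apply the parabolic maximum principle to $\partial_t\varphi$ (equivalently, to $F_{\omega,\ve}(\alpha_\varphi)$, which satisfies $\Box_L F_{\omega,\ve}(\alpha_\varphi)=0$), use $\varphi_0\in\H$ to get two-sided bounds on $F_{\omega,\ve}(\alpha_{\varphi_0})$, then translate the resulting bounds on $F_{\omega,\ve}(\alpha_\varphi)$ into the sandwich on $Q_\omega(\alpha_\varphi)$ via the identity $F_{\omega,\ve}=\cot Q_\omega+\ve/(\sin Q_\omega\prod_k\sqrt{1+\lambda_k^2})$, shrinking $\ve_0$ to handle the correction term.

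The only substantive difference is organizational. The paper runs a single open-closed argument from the start: it defines $S=\sup\{s>0:Q_\omega(\alpha_\varphi)\in[c_0,\Theta_0-c_0]\text{ for }t<s\}$, applies the maximum principle on $[0,S]$ (where ellipticity is automatic by the definition of $S$), shows both bounds strictly improve, and deduces $S=T_{\max}$. You instead apply the maximum principle directly on $[0,T_{\max})$, extract the lower bound on $Q_\omega$ immediately (since the $\ve$-correction is nonnegative, $\cot Q_\omega\le F_{\omega,\ve}\le M_0$), and reserve the continuity argument for the upper bound alone via your $T^*$. Your version is a bit tidier in that the continuity mechanism is invoked only where it is genuinely needed; the paper's version is a bit safer in that the maximum principle is only ever applied on the set $[0,S]$ where the needed cone condition is guaranteed by fiat, rather than relying on the (true but slightly glossed) assertion that the linearization stays parabolic on all of $[0,T_{\max})$. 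Both routes use the same quantitative inputs (the explicit form of $F_{\omega,\ve}$, the uniform lower bound on $\sin Q_\omega$ on the relevant range, monotonicity of $\cot$ and $\arccot$) and yield the same constants up to relabelling; the ``in particular'' assertions are handled identically, via $P_\omega=Q_\omega-\arccot\lambda_{\max}$ and $\arccot\lambda_{\min}\le Q_\omega<\Theta_0=\arccot(-\tan\theta_0)$.
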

\begin{proof}

Let $c_0>0$ be a constant to be chosen. Define
\[
S = \sup\{\,s>0~|~\text{$Q_\omega(\a_\varphi)\in [c_0,\Theta_0-c_0]$ for $t<s$}\,\}.
\]
Since $\varphi_0\in \H$, when $c_0$ is sufficiently small, the set on the right hand side is not empty and so the number $S$ is well-defined. If $S=T_{\max}$, then we are done. It suffices to rule out the case $S<T_{\max}$. By differentiating \eqref{twisted dHYM flow 2} with respect to $t$,
\[
\Box_L F_{\omega,\ve}(\a_{\varphi})=0.
\]
By maximum principle and passing $t\to S$, we conclude that for all $t\in [0,S]$,
\[
\inf_X F_{\omega,\ve}(\a_{\varphi_0})\leq F_{\omega,\ve}(\a_{\varphi})\leq \sup_X F_{\omega,\ve}(\a_{\varphi_0}).
\]
Since $\varphi_0\in \H$, $c_1\leq Q_\omega(\a_{\varphi_0})\leq \Theta_0-c_1$ for some $c_1(\varphi_0,\alpha,\omega,X)>0$ and hence
\[
\cot(\Theta_0-c_1)\leq  F_{\omega,\ve}(\a_{\varphi_0}) \leq\cot(c_1)+\ve \csc(c_1).
\]
Therefore,
\[
\begin{split}
\cot (Q_\omega(\a_\varphi))&\leq F_{\omega,\ve}(\a_\varphi)
\leq \cot(c_1)+\ve \csc(c_1) = \cot(c_{2}),
\end{split}
\]
which shows $Q_\omega(\a_\varphi)\geq c_2$ for some $c_{2}(\vp_{0},\alpha,\omega,X)>0$ on $[0,S]$.

Next, we will show $Q_\omega(\a_\varphi)<\Theta_{0}-\frac{c_1}{2}$ on $[0,S]$. If $Q_\omega(\a_\varphi) \geq \Theta_0-\frac{c_1}{2}$ at some point, then by
\[
\cot(Q_\omega(\a_\varphi))+\frac{\ve \omega^n}{\im (\a_\varphi+\sqrt{-1}\omega)^n}=F_{\omega,\ve}(\a_\varphi)
\geq \inf_X F_{\omega,\ve}(\a_{\varphi_0})
\geq \cot(\Theta_0-c_1),
\]
we obtain
\[
\begin{split}
\frac{\ve}{\sin(Q_{\omega}(\alpha_{\vp}))} \geq
\frac{\ve \omega^n}{\im (\a_\varphi+\sqrt{-1}\omega)^n}\geq  \cot(\Theta_0-c_1)-\cot\left(\Theta_0-\frac{c_1}{2}\right).
\end{split}
\]
Hence,
\[
Q_\omega(\a_\varphi)\leq c_3 \ve
\]
for some $c_3(\vp_{0},\alpha,\omega,X)>0$, which contradicts with $Q_\omega(\a_\varphi) \geq \Theta_0-\frac{c_1}{2}$ after decreasing $\ve_{0}$ if necessary.

We now have $c_2\leq Q_\omega(\a_\varphi)<\Theta_{0}-\frac{c_1}{2}$ on $[0,S]$. Choose $c_0=\min(\frac{c_1}{2},c_2)$, then $Q_\omega(\a_\varphi)\in (c_0,\Theta_0-c_0)$ for all $t\in [0,S]$, which contradicts with the maximality of $S$. This shows $S<T_{\max}$ is impossible, or equivalently, $S=T_{\max}$. Since $P_\omega(\a_\varphi)\leq Q_\omega(\a_\varphi)$, then $\a_\varphi\in \Gamma_{\omega,\Theta_{0}-c_0,\Theta_{0}}$. The lower bound of $\a_\varphi$ follows from Lemma~\ref{quasi-plurisubharmonic}.
\end{proof}

It is known that the twisted dHYM flow is the negative gradient flow of $\J_\ve$-functional which is basically from construction. {For the later purpose, we will consider the $Z$-functional defined in Collins-Yau \cite{CollinsYau2021} which is defined to be $Z=e^{-\sqrt{-1}\frac{n\pi}{2}}\mathrm{CY}_\C$ in term of the Calabi-Yau functional. Or equivalently, it is given by $Z(0)=0$ and
\[
\delta Z(u)=\int_X (\delta u) e^{-\sqrt{-1}\frac{n\pi}{2}}\left(\omega+\sqrt{-1}\alpha_{u}\right)^n.
\]
By the similar calculation of Lemma \ref{Re Im relationships}, we obtain
\[
\Im\left(e^{-\sqrt{-1}\frac{n\pi}{2}}\left(\omega+\sqrt{-1}\alpha_{u}\right)^n\right)
= -\Im\left(\alpha_{u}+\sqrt{-1}\omega\right)^n
\]
which gives us
\[
\delta (\Im Z)(u) = -\int_X (\delta u)\Im\left(\alpha_{u}+\sqrt{-1}\omega\right)^n.
\]
}

%{\blue If you agree the above, could you modify this part?}

\begin{lma}\label{CY-const}
Along the twisted dHYM flow \eqref{twisted dHYM flow 2}, we have  for all $t\in [0,T_{\max})$, $$\im (Z(\varphi(t)))= \im (Z(\varphi_0)).$$
\end{lma}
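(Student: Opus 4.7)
The plan is to differentiate $\Im(Z(\varphi(t)))$ along the flow and show the derivative vanishes identically. Using the identity for $\delta(\Im Z)$ recorded just above the lemma, and the flow equation \eqref{twisted dHYM flow 2}, I compute
\[
\frac{d}{dt}\Im(Z(\varphi(t)))
= -\int_X \varphi_t\,\Im(\alpha_\varphi+\sqrt{-1}\omega)^n
= -\int_X \bigl(F_{\omega,\ve}(\alpha_\varphi)-\cot\theta_0-a_0\ve\bigr)\Im(\alpha_\varphi+\sqrt{-1}\omega)^n.
\]
By the definition of $F_{\omega,\ve}$, this rearranges to
\[
\frac{d}{dt}\Im(Z(\varphi(t)))
= -\int_X\!\Re(\alpha_\varphi+\sqrt{-1}\omega)^n - \ve\!\int_X\!\omega^n + (\cot\theta_0+a_0\ve)\!\int_X\!\Im(\alpha_\varphi+\sqrt{-1}\omega)^n.
\]

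The second step is to use the cohomological invariance of the integrated $n$-form. Since $\alpha_\varphi-\alpha = \ddb\varphi$ is exact, $(\alpha_\varphi+\sqrt{-1}\omega)^n$ and $(\alpha+\sqrt{-1}\omega)^n$ differ by an exact form, hence have the same integral on $X$; in particular their real and imaginary parts integrate to the same constants. So I can replace $\alpha_\varphi$ by $\alpha$ in the two integrals above.

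The final step is to invoke the defining property of $\theta_0$, namely $\int_X(\alpha+\sqrt{-1}\omega)^n \in e^{\sqrt{-1}\theta_0}\cdot\mathbb{R}_{>0}$, which is equivalent to
\[
\int_X \Re(\alpha+\sqrt{-1}\omega)^n = \cot\theta_0 \int_X \Im(\alpha+\sqrt{-1}\omega)^n,
\]
together with the definition $a_0 = \int_X\omega^n/\int_X\Im(\alpha+\sqrt{-1}\omega)^n$, which gives $a_0\int_X\Im(\alpha+\sqrt{-1}\omega)^n = \int_X\omega^n$. Substituting these two identities, every term cancels and we conclude $\tfrac{d}{dt}\Im(Z(\varphi(t)))=0$ for all $t\in[0,T_{\max})$, proving the lemma.

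There is essentially no obstacle here: the proof is a bookkeeping calculation combining the first variation formula for $\Im Z$, Stokes' theorem applied to the exact form $(\alpha_\varphi+\sqrt{-1}\omega)^n-(\alpha+\sqrt{-1}\omega)^n$, and the two normalizing choices of $\theta_0$ and $a_0$. The only thing to be careful about is keeping the signs and the factors $\cot\theta_0$ versus $a_0\ve$ correctly separated, which is exactly why the constant $a_0\ve$ was added in the definition of the flow.
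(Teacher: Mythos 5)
Your proof is correct and follows essentially the same route as the paper's: differentiate $\Im Z$ using the first variation formula and the flow equation, expand $F_{\omega,\ve}$, and then use the cohomological invariance of $\int_X(\alpha_\varphi+\sqrt{-1}\omega)^n$ together with the defining identities for $\theta_0$ and $a_0$ to see every term cancels. The paper's displayed computation is just a more compact rendering of the same bookkeeping you describe.
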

\begin{proof}
Direct calculation shows
\begin{equation}\label{CY-const eqn}
\begin{split}
\frac{d}{dt} \mathrm{Im}\left( Z(\varphi)\right)
&=-\int_X \left(F_{\omega,\ve}(\a_\varphi)-\cot\theta_0-a_0\ve \right) \im\left(\a_{\varphi}+\sqrt{-1}\omega \right)^n\\
&=-\int_X \re \left(\a_{\varphi}+\sqrt{-1}\omega \right)^n-\cot\theta_0\cdot \im\left(\a_{\varphi}+\sqrt{-1}\omega \right)^n \\
&\quad - \int_X \ve\omega^n-a_0 \ve \im\left(\a_{\varphi}+\sqrt{-1}\omega \right)^n \\
&=0.
\end{split}
\end{equation}
\end{proof}

We also need the following evolution equation along the twisted dHYM flow.
\begin{lma}\label{lemma-evolution-alpha}
The twisted dHYM flow \eqref{twisted dHYM flow 2} satisfies
\[
\begin{split}
\Box_L (\a_\varphi)_{i\bar j}&=F^{p\bar q,k\bar l}_{\omega,\ve}\nabla_{i} \a_{p\bar q} \nabla_{\bar j} \a_{k\bar l}+F^{p\bar q}_{\omega,\ve}\left(R_{p\bar ji}\,^k \, \a_{k\bar q}-R_{p\bar j}\,^{\bar l}_{\bar q}\,\a_{i\bar l}\right).
\end{split}
\]
\end{lma}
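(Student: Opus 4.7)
The plan is to apply $\nabla_i\nabla_{\bar j}$ to both sides of the flow equation $\partial_t\varphi=F_{\omega,\ve}(\alpha_\varphi)-\cot\theta_0-a_0\ve$. Since $(\alpha_\varphi)_{i\bar j}=\alpha_{i\bar j}+\varphi_{i\bar j}$ with $\alpha$ time-independent, and the two constants vanish under $\nabla_i\nabla_{\bar j}$, the left-hand side becomes $\partial_t(\alpha_\varphi)_{i\bar j}$. For the right-hand side, the chain rule gives
\[
\nabla_i\nabla_{\bar j}F_{\omega,\ve}(\alpha_\varphi)
=F^{p\bar q,k\bar l}_{\omega,\ve}\nabla_i(\alpha_\varphi)_{p\bar q}\,\nabla_{\bar j}(\alpha_\varphi)_{k\bar l}
+F^{p\bar q}_{\omega,\ve}\,\nabla_i\nabla_{\bar j}(\alpha_\varphi)_{p\bar q}.
\]
Subtracting $F^{p\bar q}_{\omega,\ve}\nabla_p\nabla_{\bar q}(\alpha_\varphi)_{i\bar j}$ from both sides packages the left-hand side into $\Box_L(\alpha_\varphi)_{i\bar j}$ and leaves the stated quadratic term together with the commutator expression $F^{p\bar q}_{\omega,\ve}\bigl[\nabla_i\nabla_{\bar j}(\alpha_\varphi)_{p\bar q}-\nabla_p\nabla_{\bar q}(\alpha_\varphi)_{i\bar j}\bigr]$, which I still need to identify with the curvature terms in the statement.

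The second, reduction step rests on three standard facts. First, since $\alpha_\varphi$ is closed on the Kähler manifold $(X,\omega)$, the Chern (= Levi-Civita) connection yields the symmetry identities $\nabla_i(\alpha_\varphi)_{p\bar j}=\nabla_p(\alpha_\varphi)_{i\bar j}$ and $\nabla_{\bar j}(\alpha_\varphi)_{p\bar q}=\nabla_{\bar q}(\alpha_\varphi)_{p\bar j}$. Second, the Kähler commutator formula reads
\[
[\nabla_a,\nabla_{\bar b}]\,T_{r\bar s}=-R_{a\bar b r}{}^{k}T_{k\bar s}-R_{a\bar b\bar s}{}^{\bar l}\,T_{r\bar l}.
\]
Third, I will use the Kähler symmetries $R_{a\bar b c\bar d}=R_{c\bar b a\bar d}=R_{a\bar d c\bar b}$.

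Concretely, I would first rewrite $\nabla_i\nabla_{\bar j}(\alpha_\varphi)_{p\bar q}$ as $\nabla_i\nabla_{\bar q}(\alpha_\varphi)_{p\bar j}$ using $\bar\partial\alpha_\varphi=0$; then commute $[\nabla_i,\nabla_{\bar q}]$; apply $\partial\alpha_\varphi=0$ to turn $\nabla_i(\alpha_\varphi)_{p\bar j}$ into $\nabla_p(\alpha_\varphi)_{i\bar j}$; and finally commute $[\nabla_{\bar q},\nabla_p]$ to land on $\nabla_p\nabla_{\bar q}(\alpha_\varphi)_{i\bar j}$. Summing the two commutator contributions and invoking the Kähler symmetry $R_{a\bar b c\bar d}=R_{c\bar b a\bar d}$ causes the $R_{p\bar q i}{}^{k}\alpha_{k\bar j}$ terms to cancel after contraction with the Hermitian tensor $F^{p\bar q}_{\omega,\ve}$, leaving exactly $F^{p\bar q}_{\omega,\ve}\bigl(R_{p\bar j i}{}^{k}\alpha_{k\bar q}-R_{p\bar j}{}^{\bar l}{}_{\bar q}\alpha_{i\bar l}\bigr)$.

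The whole argument is routine tensor calculus; the only real obstacle is bookkeeping. One must track signs carefully through the two commutator applications, correctly identify which index of the curvature tensor is raised by $g^{-1}$, and invoke the Kähler symmetries at precisely the right moment so that the final contraction with $F^{p\bar q}_{\omega,\ve}$ collapses to the two terms displayed in the lemma. There is no nontrivial analytic ingredient beyond the smoothness afforded by the short-time existence of the flow.
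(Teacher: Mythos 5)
Your overall strategy---apply $\nabla_i\nabla_{\bar j}$ to the flow, expand by the chain rule, use $d\alpha_\varphi=0$ and the K\"ahler Ricci identity to convert $F^{p\bar q}\nabla_i\nabla_{\bar j}(\alpha_\varphi)_{p\bar q}$ into $F^{p\bar q}\nabla_p\nabla_{\bar q}(\alpha_\varphi)_{i\bar j}$ plus curvature---is exactly the paper's, but your route through the index gymnastics is more roundabout and slightly misattributes where the contraction with $F^{p\bar q}$ enters. The paper swaps $i\leftrightarrow p$ via $d\alpha_\varphi=0$ \emph{first}, then applies the Ricci identity \emph{once} to $[\nabla_{\bar j},\nabla_p]\alpha_{i\bar q}$, and finally swaps $\bar j\leftrightarrow\bar q$; a single commutation then produces precisely $R_{p\bar j i}{}^k\alpha_{k\bar q}-R_{p\bar j}{}^{\bar l}{}_{\bar q}\alpha_{i\bar l}$, with no residue to dispose of. Your route uses two commutations. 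The pair of $R_{\cdot\cdot\cdot}{}^{k}\alpha_{k\cdot}$-type terms you pick up do cancel outright by the K\"ahler symmetry $R_{p\bar q i}{}^k=R_{i\bar q p}{}^k$ (this cancellation does \emph{not} require the $F^{p\bar q}$-contraction as you suggest). What is left, $R_{i\bar q}{}^{\bar l}{}_{\bar j}\alpha_{p\bar l}-R_{p\bar q}{}^{\bar l}{}_{\bar j}\alpha_{i\bar l}$, is \emph{not} identical as a tensor to the lemma's $R_{p\bar j i}{}^k\alpha_{k\bar q}-R_{p\bar j}{}^{\bar l}{}_{\bar q}\alpha_{i\bar l}$; it only agrees with it after contracting with $F^{p\bar q}$, and the justification is that $F^{p\bar q}$, $\alpha_{\varphi}$ and $g$ are simultaneously diagonalizable, so in a frame diagonalizing $\alpha_\varphi$ both expressions collapse to $\sum_p F_p\,R_{p\bar p i\bar j}(\lambda_p-\lambda_i)$. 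That extra observation is the hidden cost of your two-commutation ordering; the paper's single-commutation ordering avoids it. Both arguments are correct, so this is a bookkeeping comparison rather than a gap.
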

\begin{proof}
In the following, all connections are computed with respect to $\omega$. We will omit the subscript for notational convenience. We compute using $d$-closed of $\a$ and Ricci identity to obtain:
\[
\begin{split}
\partial_t \a_{i\bar j}&=\partial_t\partial_i\partial_{\bar j} \varphi\\
&=\partial_i\partial_{\bar j}F_{\omega,\ve}\\
&=\nabla_{\bar j} \left(F^{p\bar q}_{\omega,\ve} \nabla_{i} \a_{p\bar q}\right)\\
&=F^{p\bar q,k\bar l}_{\omega,\ve}\nabla_{i} \a_{p\bar q} \nabla_{\bar j} \a_{k\bar l}+F^{p\bar q}_{\omega,\ve} \nabla_{\bar j} \nabla_i\a_{p\bar q}\\
&=F^{p\bar q,k\bar l}_{\omega,\ve}\nabla_{i} \a_{p\bar q} \nabla_{\bar j} \a_{k\bar l}+F^{p\bar q}_{\omega,\ve} \nabla_{\bar j}\nabla_{p} \a_{i\bar q}\\
&=F^{p\bar q,k\bar l}_{\omega,\ve}\nabla_{i} \a_{p\bar q} \nabla_{\bar j} \a_{k\bar l}+F^{p\bar q}_{\omega,\ve}\left(\nabla_{p} \nabla_{\bar j} \a_{i\bar q} +R_{p\bar ji}\,^k \, \a_{k\bar q}-R_{p\bar j}\,^{\bar l}_{\bar q}\,\a_{i\bar l}\right)\\
&=F^{p\bar q,k\bar l}_{\omega,\ve}\nabla_{i} \a_{p\bar q} \nabla_{\bar j} \a_{k\bar l}+F^{p\bar q}_{\omega,\ve}\left(\nabla_{p} \nabla_{\bar q} \a_{i\bar j} +R_{p\bar ji}\,^k \, \a_{k\bar q}-R_{p\bar j}\,^{\bar l}_{\bar q}\,\a_{i\bar l}\right).
\end{split}
\]
%
%Since  $\a$ is $d$-closed, Ricci identity implies
%\begin{equation}
%\begin{split}
%\nabla_i\nabla_{\bar j}\a_{p\bar q}
%&=\nabla_i\nabla_{\bar q}\a_{p\bar j}\\
%&=\nabla_{\bar q}\nabla_i\a_{p\bar j} -R_{i\bar qp}\,^k\,\a_{k\bar j}+R_{i\bar q}\,^{\bar l}_{\bar j} \,\a_{p\bar l}
%\end{split}
%\end{equation}

\end{proof}

\subsection{Long-time existence}
In this subsection, we will show that if $\ve$ is sufficiently small, the corresponding twisted dHYM flow \eqref{twisted dHYM flow 2} will exist for all $t>0$.

\begin{thm}\label{long time existence}
There is $\ve_0(\varphi_0,\alpha,\omega,X)>0$ such that for all $\ve<\ve_0$, the  twisted dHYM flow \eqref{twisted dHYM flow 2} admits a unique solution on $X\times [0,+\infty)$.
\end{thm}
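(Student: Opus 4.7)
The plan is to promote the short-time solution to a global one by deriving a priori estimates on $\varphi$ up to $C^{2,\alpha}$ that depend only on $\varphi_0, \alpha, \omega, X$ and the initial geometry (in particular not on $T < T_{\max}$). Once such bounds are in place, standard parabolic continuation extends $\varphi$ past any finite $T_{\max}$, contradicting its maximality and forcing $T_{\max} = +\infty$. The main structural inputs available are: the uniform parabolicity of $\Box_L = \partial_t - F_{\omega,\ve}^{i\bar j}\nabla_i\nabla_{\bar j}$ from Lemma \ref{lemma:parabolic-of-L}, the concavity of $F_\ve$ from Lemma \ref{F properties}(iii), and the quasi-plurisubharmonicity $\chi_\varphi = \tan(\theta_0)\omega + \alpha_\varphi > 0$ from Lemma \ref{quasi-plurisubharmonic}, which provides the lower bound $\lambda_i(\alpha_\varphi) \geq -\tan(\theta_0)$ on eigenvalues.

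For the $C^0$ bound I would differentiate \eqref{twisted dHYM flow 2} in $t$ to obtain $\Box_L F_{\omega,\ve}(\alpha_\varphi) = 0$; the maximum principle then yields $\inf_X F_{\omega,\ve}(\alpha_{\varphi_0}) \leq F_{\omega,\ve}(\alpha_\varphi) \leq \sup_X F_{\omega,\ve}(\alpha_{\varphi_0})$ uniformly in $t$, whence $|\partial_t \varphi| \leq C$ and consequently $\|\varphi(t)\|_{L^\infty} \leq \|\varphi_0\|_{L^\infty} + C T$ on $[0,T]$. The essential step is the second-order estimate, for which I would apply the maximum principle to a test quantity of the form
\[
H = \log \lambda_1(\alpha_\varphi) - A\varphi + B|\nabla \varphi|^2,
\]
where $\lambda_1$ is the largest eigenvalue of $\alpha_\varphi$ with respect to $\omega$, and $A, B$ are large constants to be chosen. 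Using Lemma \ref{lemma-evolution-alpha}, the evolution of $\log \lambda_1$ involves three terms: the concavity term $F^{p\bar q, k\bar l}_{\omega,\ve} \nabla \alpha \cdot \nabla \alpha$, which has the favorable sign by Lemma \ref{F properties}(iii) (together with the standard Andrews–Gerhardt commutator handling of second derivatives in the eigenvalue direction); a curvature term bounded by $C\,\mathrm{tr}\, F^{i\bar j}_{\omega,\ve}$; and a third-order term absorbed by the concavity contribution. The $-A\varphi$ piece yields $-A F^{i\bar j}_{\omega,\ve} \varphi_{i\bar j}$, which combined with $\alpha_\varphi = \alpha + \ddb\varphi$ and $\chi_\varphi > 0$ supplies the crucial coercive term $A F^{i\bar j}_{\omega,\ve}(\chi_\varphi)_{i\bar j} \geq \delta A\, \mathrm{tr}\, F^{i\bar j}_{\omega,\ve}$ after choosing $A$ large, forcing $\lambda_1 \leq C$ at the maximum of $H$.

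With the $C^2$ bound in hand, Lemma \ref{lemma:parabolic-of-L} together with the now-bounded eigenvalues of $\alpha_\varphi$ promotes $F_{\omega,\ve}^{i\bar j}$ to uniform ellipticity. Since $F_\ve$ is concave, the Evans–Krylov theorem for parabolic equations yields a uniform $C^{2,\alpha}$ estimate in space-time, and a standard Schauder bootstrap then gives bounds on all higher derivatives depending only on the initial data. These uniform estimates allow the flow to be extended past $T_{\max}$ by the parabolic version of short-time existence for equations with smooth bounded coefficients, contradicting the definition of $T_{\max}$ and completing the proof. The main obstacle will be implementing the second-order estimate cleanly: the algebraic manipulations handling the eigenvalue evolution at a point where $\lambda_1$ has multiplicity greater than one, and the precise choice of $A, B$ balancing the various nonpositive and coercive contributions, are where the concavity of $F_\ve$ and the positivity of $\chi_\varphi$ must be used in a delicate way.
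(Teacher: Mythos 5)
Your high-level blow-up strategy is standard and correct, and your $C^0$ and $C^1_t$ argument (differentiating the flow and applying the maximum principle to $F_{\omega,\ve}(\alpha_\varphi)$) is exactly what the paper does via Lemma \ref{lemma:parabolic-of-L}. The gap is in the second-order estimate, where your proposal both asks for more than is needed and relies on a coercivity mechanism that is not available here.

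Concretely: you propose the test quantity $H=\log\lambda_1 - A\varphi + B|\nabla\varphi|^2$ and claim the $-A\varphi$ piece supplies a coercive term $AF^{i\bar j}_{\omega,\ve}(\chi_\varphi)_{i\bar j}\geq \delta A\,\tr F^{i\bar j}_{\omega,\ve}$. This does not work for the Lagrangian-phase operator. First, the sign is off: $\Box_L(-A\varphi)=-A\dot\varphi + AF^{i\bar j}_{\omega,\ve}\varphi_{i\bar j}$, so the $\chi_\varphi>0$ information produces a \emph{positive} contribution $AF^{i\bar j}_{\omega,\ve}(\chi_\varphi)_{i\bar j}>0$ to $\Box_L H$, which pushes the wrong way when you want $\Box_L H<0$ at an interior maximum. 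Second, and more fundamentally, the pointwise inequality $F^{i\bar j}_{\omega,\ve}(\chi_\varphi)_{i\bar j}\geq \delta\,\tr F^{i\bar j}_{\omega,\ve}$ is simply false: in eigen-coordinates the left side is $\sum_i(F_\ve)_i(\tan\theta_0+\lambda_i)$, and when some $\lambda_i$ is close to the lower barrier $-\tan\theta_0$ this term can be arbitrarily small compared with $\tr F^{i\bar j}_{\omega,\ve}$. Third, by Lemma \ref{lemma:parabolic-of-L} the phase $Q_\omega(\alpha_\varphi)$ is pinned inside $(c_0,\Theta_0-c_0)$, so $(F_\ve)_i=\csc^2Q/(1+\lambda_i^2)+O(\ve)$ and both $\sum_i(F_\ve)_i$ and $\sum_i(F_\ve)_i\lambda_i$ are \emph{uniformly bounded}; consequently $F^{i\bar j}_{\omega,\ve}\varphi_{i\bar j}$ and hence $\Box_L\varphi$ are bounded, and the $-A\varphi$ term contributes nothing coercive at all. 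This is the opposite of the Monge--Amp\`ere or $J$-flow situation where $\tr F^{i\bar j}$ blows up as eigenvalues degenerate and $\pm A\varphi$ earns its keep. For this equation a time-independent second-order bound genuinely requires a $\mathcal{C}$-subsolution (which is exactly what the paper later uses, via Lemmas \ref{para-Csub}--\ref{2nd-order-longtime}, for the \emph{convergence} result), and you do not have one at this stage.

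What you are missing is that for long-time existence you do not need a $T$-independent bound; a $T_{\max}$-dependent bound is enough. The paper exploits precisely the boundedness of $\sum_i(F_\ve)_i$ that defeats your coercivity: tracing Lemma \ref{lemma-evolution-alpha}, dropping the non-positive concavity term, and bounding the curvature error by $C_0\sum_i(F_\ve)_i\leq C_2$ gives $\Box_L\bigl(\log\tr_\omega\alpha_\varphi-2C_2 t\bigr)\leq \frac{F^{i\bar i}}{(\tr_\omega\alpha_\varphi)^2}|\nabla_i\log\tr_\omega\alpha_\varphi|^2 - C_2$. A spatial maximum at $t_0>0$ would then force $0\leq\Box_L G\leq -C_2$, a contradiction, so the maximum occurs at $t=0$ and $\log\tr_\omega\alpha_\varphi\leq C_3(t+1)\leq C_3(T_{\max}+1)$. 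This is finite whenever $T_{\max}<\infty$, which together with the lower eigenvalue bound from quasi-plurisubharmonicity, the concavity of $F_\ve$, Evans--Krylov, and Schauder gives the contradiction. In short: replace $H=\log\lambda_1 - A\varphi + B|\nabla\varphi|^2$ by the much simpler $G=\log\tr_\omega\alpha_\varphi - 2C_2 t$ and the proof goes through cleanly; also using $\tr_\omega\alpha_\varphi$ rather than $\lambda_1$ sidesteps the multiplicity issue you flagged.
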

\begin{proof}
Suppose on the contrary, $T_{\max}<+\infty$. Since $\dot\varphi$ is uniformly bounded along \eqref{twisted dHYM flow 2} by Lemma~\ref{lemma:parabolic-of-L}, $\varphi$ is uniformly bounded in finite time. Moreover, Lemma~\ref{lemma:parabolic-of-L} implies $\varphi\in \H$ for $t\in [0,T_{\max})$. Since $F_{\omega,\ve}$ is concave, by standard parabolic theory, if $|\ddb\varphi|$ is uniformly bounded on $[0,T_{\max})$, then $\varphi$ is bounded in $C^k$ for all $k$ on $[0,T_{\max})$. This will contradict the maximality of $T_{\max}<+\infty$. Thanks to the lower bound of $\a_\varphi$ from Lemma~\ref{lemma:parabolic-of-L}, it suffices to estimate $\tr_\omega\a_\varphi$. In the following, we will use $C_i$ to denote constants depending only on $\varphi_0,\omega,\a,X$ and omit subscript $\varphi,\ve$ for notational convenience.

Denote the associated Riemannian metric of $\omega$ by $g$. Taking trace on the equation from Lemma~\ref{lemma-evolution-alpha}, we have
\[
\begin{split}
 \Box_L \log\tr_\omega \a
&\leq  \frac1{\tr_\omega \a}g^{i\bar j}F^{p\bar q,k\bar l} \nabla_{\bar j} (\a_\varphi)_{p\bar q} \nabla_{i} (\a_\varphi)_{k\bar l}\\
&\quad +\frac1{(\tr_\omega \a)^2} g^{p\bar q}g^{k\bar l} F^{i\bar j} \nabla_i \a_{p\bar q}\nabla_{\bar j}\a_{k\bar l}+\frac{C_n}{\tr_\omega \a_\varphi} |F^{i\bar j}| |\Rm| |\a|.
\end{split}
\]
%{\red (In the above equation, for term $\partial_{A}F$, what is $A$?)}

At each $(x,t)$, we will work on the coordinate so that
$$g_{i\bar j}=\delta_{ij},\quad  (\a_\varphi)_{i\bar j}=\lambda_i\delta_{ij}.$$
Hence using \eqref{derivatives of F} and the concavity from Lemma~\ref{F properties},  we conclude that
\[
\begin{split}
\Box_L \log\tr_\omega \a_\varphi
&\leq \frac{ F^{i\bar i} }{(\tr_\omega \a)^2}|\nabla_i \log \tr_\omega\a_\varphi|^2+C_0 \sum_{i=1}^n F_{i}.
\end{split}
\]

Recall that
$$
F_\ve(\lambda)=\cot Q(\lambda)+\frac{\ve\csc Q(\lambda)}{\prod_{j=1}^n \sqrt{\lambda_j^2+1} }.
$$
Together with the bound of $Q(\lambda)$ from Lemma~\ref{lemma:parabolic-of-L},
\begin{equation}\label{Fii-bound}
F^{i\bar i}=F_i=\frac{\partial F_\ve}{\partial \lambda_i}
=\frac{\csc^2 Q}{1+\lambda_i^2}+\frac{\ve \csc Q \left( { \cot Q}-{\lambda_i}\right)}{(\lambda_i^2+1)\cdot \prod_{j=1}^n \sqrt{\lambda_j^2+1} }\leq C_1.
\end{equation}
Therefore, the function $G=\log\tr_\omega\a_\varphi-2C_2t$ satisfies
\[
\Box_L \log\tr_\omega \a_\varphi
\leq \frac{ F^{i\bar i} }{(\tr_\omega \a)^2}|\nabla_i \log \tr_\omega\a_\varphi|^2-C_2.
\]

For any $S<T_{\max}$, if $G$ attains its maximum at $(x_0,t_0)\in X\times [0,S]$ where $t_0>0$, then at this point,
$$0\leq \Box_LG \leq -C_2$$
which is impossible. Here we have used the fact that $\nabla G|_{(x_0,t_0)}=0$. By passing $S\to T_{\max}$, we have shown that for all $t\in [0,T_{\max})$,
\[
\log\tr_\omega \a_\varphi \leq C_3 (t+1) \leq C_3 (T_{\max}+1).
\]
This completes the proof.
\end{proof}

\subsection{Long-time behaviour}

%In general, there is obstruction to the existence of (twisted) dHYM equation on a compact \K manifold. Hence, the dHYM flow might not converge to any solution as $t\to +\infty$ although it admits a long-time solution.

In this section, we will study the convergence of twisted dHYM flow assuming the existence
of a $\mathcal{C}$-subsolution. The following is the main result of this subsection which is a twisted version of result in \cite{FuZhang2021}.
\begin{thm}\label{smooth convergence}
Suppose that the dHYM equation \eqref{intro-dHYM} admits a solution $\hat{\vp}_{0}$. There exists $\ve_{0}(\hat{\vp}_{0},\alpha,\omega,X)$ such that for any $\ve\in(0,\ve_{0})$, the twisted dHYM flow \eqref{twisted dHYM flow 2} will converge smoothly to $\hat{\vp}_{\ve}$ as $t\to+\infty$, where $\hat{\vp}_{\ve}$ is the solution of twisted dHYM equation, i.e.,
\[
F_{\omega,\ve}(\alpha_{\hat{\vp}_{\ve}}) = \cot(\theta_{0})+a_{0}\ve.
\]
%or equivalently,
%\[
%\Re(\alpha_{\hat{\vp}_{\ve}}+\sqrt{-1}\omega)^{n}
%-(\cot(\theta_{0})+a_{0}\ve)\Im(\alpha_{\hat{\vp}_{\ve}}+\sqrt{-1}\omega)^{n} = -\ve\omega^{n}.
%\]
\end{thm}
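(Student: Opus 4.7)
My plan is to combine the long-time existence from Theorem~\ref{long time existence} with uniform a priori estimates along the flow, exploiting the solution $\hat{\vp}_{0}$ of the untwisted dHYM equation as a $\mathcal{C}$-subsolution of the twisted equation when $\ve$ is sufficiently small. The overall strategy follows the pattern established for concave fully nonlinear parabolic equations, adapted to the hypercritical dHYM setting as in Fu--Zhang.

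First, I would establish a uniform $C^{0}$ estimate for $\vp(t)$. Since $\hat{\vp}_{0}$ solves $F_{\omega,0}(\alpha_{\hat{\vp}_{0}}) = \cot\theta_{0}$, a perturbation argument shows that $\hat{\vp}_{0}$ remains a $\mathcal{C}$-subsolution of $F_{\omega,\ve}(\alpha_{\vp}) = \cot\theta_{0} + a_{0}\ve$ for $\ve$ small (the uniform bound $Q_{\omega}(\alpha_{\vp}) \geq c_{0}$ from Lemma~\ref{lemma:parabolic-of-L} controls the singular $\ve$-term). Together with the conservation law $\Im Z(\vp(t)) = \Im Z(\vp_{0})$ from Lemma~\ref{CY-const}, which prevents $\vp$ from drifting to $-\infty$, a Szek\'elyhidi/ABP-type argument yields a uniform oscillation bound and hence a uniform $C^{0}$ bound after normalizing $\sup_{X}\vp(t) = 0$.

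Next, for the $C^{2}$ estimate, I would start from the evolution identity for $\alpha_{\vp}$ in Lemma~\ref{lemma-evolution-alpha} and exploit the concavity of $F_{\ve}$ from Lemma~\ref{F properties}, applying the maximum principle to $\log\tr_{\omega}\alpha_{\vp} - A\vp$ with $A$ large depending on the subsolution $\hat{\vp}_{0}$. The subsolution property produces a good third-order term that absorbs the curvature and Hessian contributions, yielding $\tr_{\omega}\alpha_{\vp} \leq C$; combined with the lower bound $\alpha_{\vp} > -\tan(\theta_{0})\omega$ from Lemma~\ref{lemma:parabolic-of-L}, this gives a uniform $C^{2}$ bound. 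Since the eigenvalues of $\alpha_{\vp}$ are then confined to a compact subset of $\Gamma_{\omega,\Theta_{0}-c_{0},\Theta_{0}}$ on which $F_{\ve}$ is uniformly elliptic and concave, the parabolic Evans--Krylov theorem together with Schauder bootstrapping yields $C^{k}$ bounds for every $k$, uniform in $t$.

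For convergence, the monotonicity
\[
\frac{d\J_{\ve}}{dt} = -\int_{X}\bigl(F_{\omega,\ve}(\alpha_{\vp}) - \cot\theta_{0} - a_{0}\ve\bigr)^{2}\Im(\alpha_{\vp}+\sqrt{-1}\omega)^{n}
\]
combined with $\J_{\ve}$ being bounded below (consequence of the $C^{0}$ bound) gives $\int_{0}^{\infty}\int_{X}(\dot{\vp})^{2}\Im(\alpha_{\vp}+\sqrt{-1}\omega)^{n}\,dt < \infty$, so $\dot{\vp} \to 0$ along a subsequence. The uniform smooth estimates then ensure that any subsequential limit $\hat{\vp}_{\ve}$ is a smooth solution of $F_{\omega,\ve}(\alpha_{\hat{\vp}_{\ve}}) = \cot\theta_{0} + a_{0}\ve$. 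Uniqueness of such a solution modulo constants (by the maximum principle applied to the difference of two solutions of a concave equation), together with the conservation law from Lemma~\ref{CY-const}, pins down the limit and upgrades subsequential convergence to full $C^{\infty}$ convergence. I expect the $C^{2}$ estimate in the previous paragraph to be the main technical obstacle: one must verify that the $\ve$-perturbation of the operator does not destroy the $\mathcal{C}$-subsolution structure with constants uniform in small $\ve$, because the extra term in $F_{\omega,\ve}$ can in principle blow up where $\Im(\alpha_{\vp}+\sqrt{-1}\omega)^{n} \to 0$, and it is precisely the uniform phase lower bound of Lemma~\ref{lemma:parabolic-of-L} that makes this work.
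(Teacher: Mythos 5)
Your overall strategy matches the paper's: use $\hat{\vp}_{0}$ as a parabolic $\mathcal{C}$-subsolution of the twisted flow (Lemma~\ref{para-Csub}), get uniform $C^{0}$ bounds via the conservation law of Lemma~\ref{CY-const} and the Phong--T\^o/Fu--Zhang machinery (Lemma~\ref{0rd-order-longtime}), then second-order bounds, Evans--Krylov, Schauder, and finally convergence. However, there are two genuine gaps in the details.

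First, the $C^{2}$ estimate: you propose applying the maximum principle to $\log\tr_{\omega}\alpha_{\vp}-A\vp$ and claim this directly yields $\tr_{\omega}\alpha_{\vp}\leq C$. In this dHYM setting that is not what comes out. The paper works with $G=\log\lambda_{\max}+\phi(u)$ where $u=\vp-\hat{\vp}_{0}$ and, even after exploiting the subsolution dichotomy (Lemma~\ref{sub-sol-TdHYMFlow}), the Wang--Yuan observation that $\lambda_{n-1}$ is bounded below in the hypercritical range, and the explicit structure of $F_{\ve}$, one only obtains the intermediate bound $|\ddb\vp|\leq C(1+\sup|\nabla\vp|)$ (Lemma~\ref{2nd-order-longtime}). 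Removing the gradient dependence requires a separate blowup/Liouville argument \`a la Collins--Jacob--Yau and Takahashi (Lemma~\ref{2nd-order-longtime-true}). Your sketch omits both the gradient-dependent nature of the direct maximum-principle bound and the blowup step that closes it; for this class of Lagrangian-phase type operators the shortcut you describe does not close on its own.

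Second, the convergence: from $\int_{0}^{\infty}\int_{X}\dot{\vp}^{2}<\infty$ you only obtain $\dot{\vp}\to 0$ along a sequence of times, so not every subsequential limit of $\vp(t)$ is a priori a stationary solution, and the ``uniqueness modulo constants + conservation law upgrades subsequential to full convergence'' step is not justified as written. The paper avoids this by applying the standard Harnack/oscillation argument to the linear equation $\Box_{L}\dot{\vp}=0$ (as in Gill), yielding $\mathrm{osc}_{X}\dot{\vp}\leq Ce^{-C^{-1}t}$; combined with $\int_{X}\dot{\vp}\,\Im(\alpha_{\vp}+\sqrt{-1}\omega)^{n}=0$ from Lemma~\ref{CY-const}, this forces $\dot{\vp}\to 0$ uniformly and exponentially, and hence $\vp(t)$ converges without invoking uniqueness of the stationary solution. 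You should replace the subsequential argument with this oscillation-decay estimate (or supply a genuine {\L}ojasiewicz-type inequality) to make the convergence step rigorous.
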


%Analogous to the discussion of long-time existence of \eqref{twisted dHYM flow 2}, it will be sufficient to obtain the $C^2$ estimate of $\varphi$.

By Theorem~\ref{long time existence}, it is known that the twisted dHYM flow \eqref{twisted dHYM flow 2} exists for all $t\geq 0$. It remains to study its convergence. Before we prove the  uniform $C^k$ estimate along the flow, we first recall the notion of $\mathcal{C}$-subsolution from \cite{PhongTo2017} in the content of parabolic twisted dHYM equation.
\begin{defn}
A smooth function $\underline{u}$ is said to be a (parabolic) $\mathcal{C}$-subsolution of \eqref{twisted dHYM flow 2}, if there exist constants $\delta,K>0$, so that for any $(z,t)\in X\times [0,T)$, the condition
$$(\star) \quad F\left(\lambda(\alpha_{\underline{u}})+\mu \right)-\partial_t \underline{u}+\tau= \cot\theta_0 +a_0\ve,\quad\;\mu+\delta I_n \in \Gamma_n,\quad\tau>-\delta $$
implies that $|\mu|+|\tau|\leq K$, where $\lambda(\alpha_{\underline{u}})$ denotes the eigenvalue of $\alpha_{\underline{u}}$ with respect to $\omega$, and $I_n$ denotes the vector $(1,...,1)$.
\end{defn}

By the work of \cite{CollinsJacobYau2020}, it is now known that existence of subsolution is
equivalent to the existence of dHYM equation. The following Lemma shows
that a dHYM equation is indeed a parabolic $C$-subsolution in the sense of \cite{PhongTo2017}
as expected, see also \cite[Section 5]{Takahashi2020}.

%see \cite[Section 5]{Takahashi2020} and \cite[Lemma 3.3]{CollinsJacobYau2020}
\begin{lma}\label{para-Csub}
Suppose the dHYM equation \eqref{intro-dHYM} admits a solution $\hat\varphi_0$, then there is $\ve_0(\hat\varphi_0,\a,\omega,X)>0$ such that for all $\ve<\ve_0$, $\hat\varphi_0$ is a  (parabolic) $C$-subsolution of \eqref{twisted dHYM flow 2}.
\end{lma}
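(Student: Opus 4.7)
The plan is to verify the $\mathcal{C}$-subsolution condition $(\star)$ directly, exploiting the fact that a hypercritical phase dHYM solution automatically satisfies a strict inequality $P_\omega(\alpha_{\hat\varphi_0})<Q_\omega(\alpha_{\hat\varphi_0})=\theta_0$. Since $\hat\varphi_0$ is time-independent, $\partial_t\hat\varphi_0=0$, so $(\star)$ simplifies to
\[
F_\ve(\hat\lambda+\mu)+\tau=\cot\theta_0+a_0\ve,\qquad \mu_i>-\delta,\ \tau>-\delta,
\]
where $\hat\lambda=\lambda(\alpha_{\hat\varphi_0})$. Because $\hat\varphi_0\in\H$ solves dHYM, $\arccot(\hat\lambda_i)>0$ for each $i$, and by compactness of $X$ there exists $\eta>0$ with $\min_{x\in X,\,k}\arccot(\hat\lambda_k(x))\geq 2\eta$, so $P_\omega(\alpha_{\hat\varphi_0})=Q_\omega(\alpha_{\hat\varphi_0})-\min_k\arccot(\hat\lambda_k)\leq\theta_0-2\eta$.

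First I would fix $\delta>0$ small enough (depending only on $\hat\varphi_0,\alpha,\omega,X$) so that continuity of $\arccot$ gives, uniformly in $x\in X$ and $k$,
\[
\sum_{i\neq k}\arccot(\hat\lambda_i(x)-\delta)\leq \theta_0-\eta.
\]
Next, assuming $(\star)$, rewrite the equation as $F_\ve(\hat\lambda+\mu)=\cot\theta_0+a_0\ve-\tau\leq\cot\theta_0+a_0\ve+\delta$. Let $i_0$ be an index where $\mu_{i_0}=\max_i\mu_i$, so that $\mu_{i_0}\geq|\mu|/n-\delta$. Using monotonicity of $\arccot$ and $\mu_i\geq -\delta$,
\[
Q(\hat\lambda+\mu)\leq \arccot(\hat\lambda_{i_0}+\mu_{i_0})+\sum_{i\neq i_0}\arccot(\hat\lambda_i-\delta)\leq \arccot(\hat\lambda_{i_0}+\mu_{i_0})+\theta_0-\eta.
\]
If $|\mu|$ were very large, $\mu_{i_0}$ would be large, forcing $\arccot(\hat\lambda_{i_0}+\mu_{i_0})<\eta/2$ and hence $Q(\hat\lambda+\mu)<\theta_0-\eta/2$. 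Since the second term of $F_\ve$ is nonnegative, this yields
\[
F_\ve(\hat\lambda+\mu)\geq\cot Q(\hat\lambda+\mu)>\cot(\theta_0-\eta/2)=\cot\theta_0+c_0
\]
for some $c_0(\eta,\theta_0)>0$. Shrinking $\ve_0$ and $\delta$ so that $a_0\ve+\delta<c_0$ produces a contradiction, giving an a priori bound $|\mu|\leq K_1$.

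With $|\mu|\leq K_1$ and $\mu_i\geq-\delta$, the point $\hat\lambda+\mu$ ranges in a compact subset of $\Gamma_{\theta,\Theta}$ (the lower bound $\hat\lambda+\mu\geq\hat\lambda-\delta I_n>-\tan\theta_0\,\omega$ follows from Lemma~\ref{quasi-plurisubharmonic} after possibly further decreasing $\delta$), so $F_\ve(\hat\lambda+\mu)$ is uniformly bounded, and then $\tau=\cot\theta_0+a_0\ve-F_\ve(\hat\lambda+\mu)$ is bounded as well, yielding $|\mu|+|\tau|\leq K$. The main obstacle is really just the first step: producing the uniform gap $\eta>0$ between $P_\omega(\alpha_{\hat\varphi_0})$ and $\theta_0$ and translating it, via the monotonicity in Lemma~\ref{F properties}(i) and the form of $F_\ve$, into a quantitative lower bound on $F_\ve(\hat\lambda+\mu)$ that beats the twist parameter $a_0\ve+\delta$. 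Everything else is monotonicity and a compactness argument.
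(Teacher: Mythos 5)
Your proof is correct and takes essentially the same approach as the paper's: bound $\mu$ by showing that a large component $\mu_{i_0}$ would force $Q(\hat\lambda+\mu)$ quantitatively below $\theta_0$ (using the uniform gap $\eta$ from compactness and $\arccot(\hat\lambda_i)>0$), contradicting $F_\ve(\hat\lambda+\mu)\leq\cot\theta_0+a_0\ve+\delta$, and then bound $\tau$. The paper bounds $\tau$ first via monotonicity of $F$ and then derives the $\mu$-contradiction from the equivalent inequality $\sum_{i\neq j}\arccot(\hat\lambda_i-\delta)\geq\arccot(\cot\theta_0+a_0\ve+\delta)-\arccot(\hat\lambda_j+\Lambda)$, but this is the same estimate organized in a different order.
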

\begin{proof}
Note that $\underline{u}=\hat\varphi_0$ is time independent and so $\partial_t \underline{u}\equiv 0$. { Combining this with $(\star)$ and the monotonicity of $F$ from Lemma \ref{F properties} (i), we see that
\[
\tau = \cot\theta_0 +a_0\ve-F\left(\lambda(\alpha_{\underline{u}})+\mu \right)
\leq \cot\theta_0 +a_0\ve-F\left(\lambda(\alpha_{\underline{u}})-\delta I_n \right).
\]
%(I include almost trivial details of $\tau$ here, do you agree?)
}
It remains to consider $\mu$.
%Recall that
%$$F(\lambda)=\cot Q+\frac{\ve}{\sin Q\cdot  \prod_{i=1}^n \sqrt{\lambda_i^2+1}}.$$
%Since
%\begin{equation*}
%\sin Q \geq \sin \left(\arccot (\lambda_j+\mu_j) \right)=\frac1{\sqrt{(\lambda_j+\mu_j)^2+1}},
%\end{equation*}
%the second term in $F$ is of order $O(\ve)$ which is independent of $\mu\to +\infty$. Hence,
The equation $(\star)$ can be rewritten as
\[
\begin{split}
\cot Q+\frac{\ve}{\sin Q\cdot  \prod_{i=1}^n \sqrt{(\lambda_i+\mu_i)^2+1}} =\cot \theta_0+a_0\ve-\tau,
\end{split}
\]
where $Q=\sum_{i=1}^n \arccot (\lambda_i+\mu_i)$. Suppose $\mu_j\geq \Lambda$ for some $j$, where $\Lambda$ is a large constant to be specified. Then
\[
\begin{split}
\sum_{i\neq j} \arccot (\lambda_i-\delta)
&\geq \sum_{i\neq j} \arccot (\lambda_i+\mu_i)\\[1mm]
&\geq Q-\arccot(\lambda_j+\Lambda)\\[5mm]
&\geq \arccot \left(\cot \theta_0+a_0\ve +\delta\right)-\arccot(\lambda_j+\Lambda)\\[1mm]
&=  \arccot \left[\cot \left(\sum_{i=1}^n \arccot(\lambda_i) \right)+a_0\ve +\delta\right]-\arccot(\lambda_j+\Lambda).
\end{split}
\]
If we choose $\Lambda$ sufficiently large and $\ve_0,\delta$ sufficiently small,  it will be impossible and hence $\mu_{j}<\Lambda$ for all $j$. This gives the upper bound of $\mu$.
\end{proof}

\begin{lma}\label{0rd-order-longtime}
There exist $C_0,\ve_0>0$ depending only on $\varphi_0,\hat\varphi_0,\a,\omega,X$ such that for all $\ve<\ve_0$, the twisted dHYM flow \eqref{twisted dHYM flow 2} satisfies
$$\sup_{X\times [0,+\infty)}|\varphi|\leq C_0.$$
\end{lma}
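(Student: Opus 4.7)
The plan is the standard two-step strategy for parabolic complex equations admitting a $\mathcal{C}$-subsolution: first an oscillation bound by a Phong--To/Sz\'ekelyhidi type argument, then pin down the additive constant using the conservation law of Lemma~\ref{CY-const}. Throughout, $\ve$ will be chosen small in a way depending only on $\varphi_0,\hat\varphi_0,\alpha,\omega,X$.

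\textbf{Step 1 (oscillation bound).} By Lemma~\ref{para-Csub}, for $\ve$ small $\hat\varphi_0$ is a parabolic $\mathcal{C}$-subsolution of \eqref{twisted dHYM flow 2}. By Lemma~\ref{lemma:parabolic-of-L} the eigenvalues of $\alpha_\varphi$ with respect to $\omega$ stay in a fixed compact subset of $\Gamma_{\theta_0+\pi/2-c_0,\,\Theta_0}$ along the flow, and by Lemma~\ref{F properties} the operator $F_\ve$ is elliptic, monotone and concave on this region with uniform constants. These are exactly the structural hypotheses under which the parabolic $C^0$ estimate of Phong--To (together with the $\mathcal{C}$-subsolution $\hat\varphi_0$) yields the oscillation bound
\[
\mathrm{osc}_X \, \varphi(\cdot,t) \;=\; \sup_X \varphi(\cdot,t)-\inf_X \varphi(\cdot,t) \;\leq\; C_1
\]
uniformly in $t\in[0,\infty)$. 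Writing $c(t)=\sup_X\varphi(\cdot,t)$ and $\ti\varphi(t)=\varphi(t)-c(t)$, this says $|\ti\varphi|\leq C_1$ on $X\times[0,\infty)$.

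\textbf{Step 2 (pinning down $c(t)$).} Since $\alpha_{s\varphi}=\alpha_{s\ti\varphi}$, integrating the first variation of $\mathrm{Im}\, Z$ along the path $s\mapsto s\varphi$ and using Fubini gives
\[
\mathrm{Im}\, Z(\varphi) \;=\; -\int_X \ti\varphi \int_0^1 \mathrm{Im}(\alpha_{s\ti\varphi}+\sqrt{-1}\omega)^n\,ds \;-\; c(t)\int_X \mathrm{Im}(\alpha+\sqrt{-1}\omega)^n,
\]
where we used that $\int_X \mathrm{Im}(\alpha_{s\ti\varphi}+\sqrt{-1}\omega)^n=\int_X \mathrm{Im}(\alpha+\sqrt{-1}\omega)^n$ is a cohomological invariant. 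By Lemma~\ref{CY-const}, $\mathrm{Im}\, Z(\varphi(t))=\mathrm{Im}\, Z(\varphi_0)$ is a fixed constant; the first term on the right is bounded in absolute value by $C_1\int_X|\mathrm{Im}(\alpha_{s\ti\varphi}+\sqrt{-1}\omega)^n|\,ds\leq C_2$ thanks to Step~1 and the uniform bound on $\alpha_\varphi$ from Lemma~\ref{lemma:parabolic-of-L}. Since $\H$ is non-empty and $[\alpha]$ has hypercritical phase, $\int_X \mathrm{Im}(\alpha+\sqrt{-1}\omega)^n>0$, so solving for $c(t)$ gives $|c(t)|\leq C_3$. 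Combining with $|\ti\varphi|\leq C_1$ yields the desired bound.

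The main obstacle is Step~1: verifying that the $C^0$ machinery of Phong--To applies uniformly in $t\geq 0$ to the twisted dHYM flow. The key inputs are concavity and monotonicity of $F_\ve$ (Lemma~\ref{F properties}, which is why $n\geq 4$ is required and $\ve$ is taken small), together with the time-uniform fact from Lemma~\ref{lemma:parabolic-of-L} that the flow stays in a compact slice of the cone $\Gamma_{\theta_0+\pi/2-c_0,\Theta_0}$, so the $\mathcal{C}$-subsolution condition of Lemma~\ref{para-Csub} can be used with constants independent of $t$.
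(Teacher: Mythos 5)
Your proposal is correct and follows essentially the same route as the paper, which establishes the integral constraint $\int_X(\varphi-\varphi_0)\int_0^1\Im(\alpha_{s\varphi+(1-s)\varphi_0}+\sqrt{-1}\omega)^n\,ds=0$ from Lemma~\ref{CY-const}, notes the uniform quasi-plurisubharmonicity from Lemma~\ref{lemma:parabolic-of-L}, invokes the $\mathcal{C}$-subsolution from Lemma~\ref{para-Csub} together with ellipticity of $F_{\omega,\ve}$, and then cites the Phong--T\^o-based $C^0$ machinery from Fu--Zhang. Your two-step decomposition (ABP-type oscillation bound, then pin down the additive constant via the $\Im Z$ conservation law) is exactly the content of the cited lemmas spelled out explicitly, and your algebra in Step 2 is correct.
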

\begin{proof}
By Lemma~\ref{CY-const}, we have
\begin{equation}
\label{cont-CY-flow} \int_X (\varphi-\varphi_0) \int^1_0 \im \left( \a_{s\varphi+(1-s)\varphi_0}+\sqrt{-1}\omega\right)^n ds=0.
\end{equation}
and $\varphi$ is uniformly quasi-plurisubharmonic for all $t\geq 0$ thanks to Lemma~\ref{lemma:parabolic-of-L}. Since $\hat\varphi_0$ serves as a $\mathcal{C}$-subsolution by Lemma~\ref{para-Csub} and the operator $F_{\omega,\ve}$ is elliptic by Lemma~\ref{F properties} (i), the proof of \cite[Lemma 3.4, 3.5]{FuZhang2021} which is based on the work \cite{PhongTo2017} can now be carried over, see also \cite[Lemma 5.3]{Takahashi2020}.
\end{proof}

It is known that the solution to dHYM equation naturally provides a $\mathcal{C}$-subsolution to the twisted dHYM equation. This will provide us a barrier to establish higher order estimates along \eqref{twisted dHYM flow 2}. We first need the following key properties of $\mathcal{C}$-subsolutions.
\begin{lma}\label{sub-sol-TdHYMFlow}
There exist $\rho,K,\ve_0>0$ depending only on $\varphi_0,\hat\varphi_0,\a,\omega,X$ such that for $\ve<\ve_0$, if $|\a_\varphi-\a_{\hat \varphi_0}|>K$, then either
$$\Box_L (\varphi-\hat\varphi_0)> \rho\sum_{i=1}^n F_{\omega,\ve}^{i\bar i}$$
or for all $1\leq i\leq n$,
$$F_{\omega,\ve}^{i\bar i}> \rho\sum_{j=1}^n F_{\omega,\ve}^{j\bar j}.$$
\end{lma}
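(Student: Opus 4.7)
The lemma is an instance of the Szekelyhidi--Phong-To dichotomy for $\mathcal{C}$-subsolutions, adapted to our parabolic twisted dHYM setting. The strategy has three steps.

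\emph{Step 1: Pointwise computation of $\Box_L(\varphi-\hat\varphi_0)$.} Fix $(x_0,t_0)\in X\times[0,+\infty)$ and choose coordinates so that $\omega_{i\bar j}=\delta_{ij}$ and $(\a_\varphi)_{i\bar j}=\lambda_i\delta_{ij}$ at $x_0$. By \eqref{derivatives of F}, $F^{i\bar j}_{\omega,\ve}=(F_\ve)_i\,\delta_{ij}$. Using $\partial_t\hat\varphi_0=0$ together with the flow equation \eqref{twisted dHYM flow 2}, a direct calculation gives
\[
\Box_L(\varphi-\hat\varphi_0)=F_\ve(\lambda)-\cot\theta_0-a_0\ve+\sum_{i=1}^{n}(F_\ve)_i\bigl((\a_{\hat\varphi_0})_{i\bar i}-\lambda_i\bigr).
\]

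\emph{Step 2: Apply the subsolution dichotomy.} By Lemma~\ref{para-Csub}, for all $\ve<\ve_0$ the function $\hat\varphi_0$ is a parabolic $\mathcal{C}$-subsolution of \eqref{twisted dHYM flow 2} with margin $\delta,K>0$ that is inherited from $\hat\varphi_0$ being an exact solution at $\ve=0$, hence uniform in $\ve$. Combining this with the ellipticity, symmetry, and concavity of $F_\ve$ on $\Gamma_{\theta,\Theta}$ for $n\ge 4$ (Lemma~\ref{F properties}), together with the uniform confinement $\a_\varphi\in\Gamma_{\omega,\Theta_0-c_0,\Theta_0}$ from Lemma~\ref{lemma:parabolic-of-L}, the geometric dichotomy of Szekelyhidi (Proposition~6 of \cite{Szekelyhidi2018}) — in its parabolic form as established by Phong--To — yields constants $\rho,R>0$ depending only on $\varphi_0,\hat\varphi_0,\a,\omega,X$ such that whenever $|\lambda|>R$ at $x_0$, at least one of the following holds: either
\[
F_\ve(\lambda)-\cot\theta_0-a_0\ve+\sum_{i=1}^{n}(F_\ve)_i\bigl((\a_{\hat\varphi_0})_{i\bar i}-\lambda_i\bigr)>\rho\sum_{i=1}^{n}(F_\ve)_i,
\]
which by Step~1 is precisely $\Box_L(\varphi-\hat\varphi_0)>\rho\sum_i F^{i\bar i}_{\omega,\ve}$; or $(F_\ve)_i>\rho\sum_{j=1}^{n}(F_\ve)_j$ for every $i$. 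Geometrically the dichotomy says that, at infinity on a level set $\{F_\ve=\text{const}\}$, the outward normal direction $((F_\ve)_i)$ either has comparable components in every coordinate (case (b)) or a significant component along the vector $\lambda(\a_{\hat\varphi_0})-\lambda$ (case (a), via concavity); the $\mathcal{C}$-subsolution hypothesis rules out every other possibility.

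\emph{Step 3: From $|\lambda|>R$ to $|\a_\varphi-\a_{\hat\varphi_0}|>K$.} Since $\a_{\hat\varphi_0}$ is smooth on the compact manifold $X$, its eigenvalues with respect to $\omega$ are uniformly bounded; take $K=R+\sup_X|\lambda(\a_{\hat\varphi_0})|+1$. Then $|\a_\varphi-\a_{\hat\varphi_0}|>K$ forces $|\lambda|>R$ and Step~2 applies.

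\emph{Main obstacle.} The nontrivial content is Step~2: namely, the parabolic Szekelyhidi dichotomy applied uniformly in small $\ve$. The two delicate points are (i) the subsolution margin in Lemma~\ref{para-Csub} — which controls how much $F_\ve(\a_{\hat\varphi_0}+\mu e_i)$ can exceed the target $\cot\theta_0+a_0\ve$ along coordinate rays — must remain bounded below by a constant independent of $\ve$; and (ii) the a priori bound $|\partial_t\varphi|\le C$ from Lemma~\ref{lemma:parabolic-of-L} (which effectively shifts the relevant level set of $F_\ve$) must not exceed this margin. Both are handled by shrinking $\ve_0$ and exploiting that $\hat\varphi_0$ solves the untwisted $(\ve=0)$ dHYM equation exactly, so that the subsolution margin for the $\ve$-twisted operator $F_\ve$ degrades by only $O(\ve)$ from the strict margin at $\ve=0$.
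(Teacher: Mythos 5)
Your proof is correct and follows the same approach as the paper: the paper simply observes that $\hat\varphi_0$ is a parabolic $\mathcal{C}$-subsolution (Lemma~\ref{para-Csub}) and that $F_{\omega,\ve}$ is concave (Lemma~\ref{F properties}), then cites \cite[Lemma 3]{PhongTo2017} and \cite[Lemma 5.4]{Takahashi2020}, which is exactly the Szekelyhidi/Phong--T\^o dichotomy you unpack in Steps 1--3. Your explicit pointwise computation of $\Box_L(\varphi-\hat\varphi_0)$ and the discussion of why the subsolution margin is uniform in small $\ve$ are a helpful expansion of the argument the paper leaves implicit.
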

\begin{proof}
Since $\hat\varphi_0$ is a $\mathcal{C}$-subsolution by Lemma~\ref{para-Csub} and $F_{\omega,\ve}$ is concave from Lemma~\ref{F properties} (iii). The argument in \cite[Lemma 3]{PhongTo2017} can now be carried over, see also \cite[Lemma 5.4]{Takahashi2020}.
\end{proof}
Before we prove the second order estimate, we first show that the first order is controlling the second order in an improved rate.
\begin{lma}\label{2nd-order-longtime}
There exist $C,\ve_0>0$ depending only on $\varphi_0,\hat\varphi_0,\a,\omega,X$ such that for all $T\in (0,+\infty)$, $\ve<\ve_0$,
$$\sup_{X\times [0,T]}|\ddb\varphi|\leq C(1+\sup_{X\times [0,T]} |\nabla\varphi|).$$
\end{lma}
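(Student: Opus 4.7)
The plan is to apply the space-time maximum principle to a test function of the form
\[
G = \log\lambda_1(\a_\vp) + \phi(|\nabla\vp|^2) - A(\vp - \hat\vp_0),
\]
where $\lambda_1$ is the largest eigenvalue of $\a_\vp$ with respect to $\omega$, $A>0$ and $\phi$ are chosen appropriately in terms of $M := \sup_{X\times[0,T]}|\nabla\vp|$. Since Lemma~\ref{lemma:parabolic-of-L} bounds all eigenvalues of $\a_\vp$ from below by $-\tan\theta_0$, it suffices to prove $\lambda_1(x,t)\leq C(1+M)$ pointwise on $X\times[0,T]$.

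Assuming $G$ attains its maximum at $(x_0, t_0)$ with $t_0>0$ (otherwise the initial datum gives the bound) and, by a standard perturbation, that $\lambda_1$ is a simple eigenvalue there, I work in holomorphic normal coordinates at $x_0$ diagonalising $\omega$ and $\a_\vp$ simultaneously. The identity $\nabla G(x_0,t_0)=0$ ties $\nabla\log\lambda_1$ to $\phi'\nabla|\nabla\vp|^2$ and $A\nabla(\vp-\hat\vp_0)$; with the Hou-Ma-Wu choice $\phi(s) = -\tfrac{1}{2}\log(2(M+1)^2 - s)$ (which satisfies $\phi''=2(\phi')^2$ and $\phi'\geq c(1+M)^{-2}$), the ``bad'' gradient term $F^{i\bar i}|\p_i\lambda_1|^2/\lambda_1^2$ inside $\Box_L\log\lambda_1$ can be absorbed into the $-\phi'' F^{i\bar i}|\p_i|\nabla\vp|^2|^2$ contribution from $\Box_L\phi(|\nabla\vp|^2)$, leaving an error of order $CA^2(1+M)^2\sum_i F^{i\bar i}$.

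Expanding $\Box_L G(x_0,t_0)\geq 0$ using Lemma~\ref{lemma-evolution-alpha}, the standard simple-eigenvalue perturbation identity (which contributes a further nonpositive term $-\sum_{k>1,i}2F^{i\bar i}|\nabla_i\a_{1\bar k}|^2/(\lambda_1-\lambda_k)$), the concavity of $F$ from Lemma~\ref{F properties}(iii) (which renders the Hessian-of-$F$ term $F^{p\bar q,k\bar l}\nabla_1\a_{p\bar q}\nabla_{\bar 1}\a_{k\bar l}$ nonpositive), and the Bochner formula for $\Box_L|\nabla\vp|^2$ (which produces the useful negative term $-2\phi'F^{i\bar i}\lambda_i^2$ after writing $\nabla_i\nabla_{\bar k}\vp = (\a_\vp)_{i\bar k} - \a_{i\bar k}$), I then apply the trichotomy of Lemma~\ref{sub-sol-TdHYMFlow}. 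If $|\a_\vp - \a_{\hat\vp_0}|(x_0,t_0)\leq K$, then $\lambda_1$ is directly bounded. In Case (a), where $\Box_L(\vp-\hat\vp_0)\geq\rho\sum_i F^{i\bar i}$, the term $-A\Box_L(\vp-\hat\vp_0)\leq -A\rho\sum_i F^{i\bar i}$ must dominate both the curvature error $C\sum F^{i\bar i}$ and the residual $CA^2(1+M)^2\sum F^{i\bar i}$, which with the correct choice of $A$ forces $\lambda_1\leq C(1+M)$. In Case (b), where $F^{i\bar i}\geq \rho\sum_j F^{j\bar j}$ uniformly in $i$, the Bochner contribution $-2\phi'F^{1\bar 1}\lambda_1^2\leq -2\phi'\rho\lambda_1^2\sum_i F^{i\bar i}$ combined with $\phi'\geq c(1+M)^{-2}$ directly forces $\lambda_1^2\leq C(1+M)^2$.

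The main obstacle is the scaling: choosing $A$ and $\phi$ so that the quadratic-in-$M$ error contributions (in particular the $A^2(1+M)^2\sum F^{i\bar i}$ term from the Cauchy-Schwarz absorption of the bad gradient term) are dominated by the subsolution and Bochner terms, yielding the sharp linear-in-$M$ bound rather than a naive quadratic one. This scaling bookkeeping, together with the treatment of the curvature, eigenvalue-perturbation, and Bochner error terms, is the technical core of the argument; it is parallel to the arguments of \cite{PhongTo2017} and \cite[Proposition 3.9]{FuZhang2021}, adapted to the twisted setting using the uniform ellipticity and concavity of $F_{\omega,\ve}$ guaranteed by Lemma~\ref{F properties}.
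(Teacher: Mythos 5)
Your proposal takes a genuinely different route: a Hou--Ma--Wu--style test function
\[
G = \log\lambda_1 + \phi(|\nabla\vp|^2) - A(\vp-\hat\vp_0),
\]
with $\phi$ depending on the gradient, whereas the paper follows \cite{FuZhang2021} and uses the gradient-free test function $G = \log\lambda_{\max} + \phi(u)$, $u=\vp-\hat\vp_0$. The paper gets away without any gradient term because it exploits the Wang--Yuan structural identity for the Lagrangian phase operator: after writing out $\Box_L\log\lambda_1$ via \eqref{derivatives of F}, the coefficient of $|\nabla_i\log\lambda_1|^2$ for $i\geq2$ is $\frac{\lambda_1(F_i-F_1)}{\lambda_i-\lambda_1}+F_i=\frac{1-\lambda_1\lambda_i}{1+\lambda_1^2}F_i-O(\ve)$, which is \emph{negative} for $2\leq i\leq n-1$ since the hypercritical bound $Q\leq\Theta_0-c_0$ forces $\lambda_{n-1}\geq C^{-1}>0$ (hence $1-\lambda_1\lambda_i<0$ for large $\lambda_1$), while $F_1\sim\lambda_1^{-2}$ is already tiny. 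This is precisely what yields the linear bound $\lambda_1\leq CM$: the final inequality is $0\leq F_n|\nabla_n u|^2(C_2\lambda_1^{-1}-1)+C_6\lambda_1^{-2}|\nabla_1u|^2-1$, and the $\lambda_1^{-2}$ factor in front of $|\nabla_1u|^2$ is what converts to $\lambda_1\lesssim|\nabla u|$.

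There is a concrete gap in your scaling claim. With the linear lower-order term $-A(\vp-\hat\vp_0)$, the critical point identity at the maximum reads $\nabla\log\lambda_1 = -\phi'\nabla|\nabla\vp|^2 + A\nabla u$, and Cauchy--Schwarz leaves a residual $2A^2F^{i\bar i}|\nabla_iu|^2\leq 2A^2M^2\sum F^{i\bar i}$ that cannot be absorbed, since $\psi(u)=-Au$ has $\psi''=0$ (the paper's $\phi$ has $\phi''=1$ exactly for this reason). You then face an incompatible dichotomy: in Case~(a) of Lemma~\ref{sub-sol-TdHYMFlow} the subsolution term $-A\rho\sum F^{i\bar i}$ must dominate the constant curvature error $C\sum F^{i\bar i}$, forcing $A\geq C/\rho$, while to dominate the residual $2A^2M^2\sum F^{i\bar i}$ you need $A\leq\rho/(CM^2)$ --- impossible for a uniform $A$ once $M$ is large. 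If instead $A$ is a uniform constant, then in Case~(b) the balance $c\rho\phi'\lambda_1^2\leq CA^2M^2+C$ with $\phi'\sim(1+M)^{-2}$ yields $\lambda_1\lesssim M^2$, quadratic not linear. The quadratic estimate would in fact still feed into the blow-up argument of Lemma~\ref{2nd-order-longtime-true} (which, as the paper notes, only requires quadratic control), so the error is in claiming the \emph{stated} linear bound rather than in the ultimate viability of the strategy. To recover the linear bound one genuinely needs the Wang--Yuan observation, which is missing from your argument; this, together with the use of a nonlinear $\phi(u)$ rather than the linear $-Au$, is the essential content of the paper's proof.
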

\begin{proof}
We will always assume $\ve_0$ is sufficiently small so that the previous lemmas hold. Lemma~\ref{lemma:parabolic-of-L} shows $\a_\varphi>-(\tan\theta_0)\omega$. Then it suffices to estimate the upper bound of $\a_\varphi$. We will follow the argument in \cite{FuZhang2021} where the test function only involves the zeroth and second order quantities. In the following, we will use $C_i$ to denote any constants depending only on $n,\varphi_0,\hat\varphi_0,\a,\omega,X$. We will also omit the subscript on $\a_\varphi, F_{\omega,\ve}$ for notational convenience.

On any compact time interval $[0,T]$, consider the function
$$G=\log \lambda_{\max}+\phi(u),$$
where $\lambda_{\max}$ is the largest eigenvalue of $\a_\varphi$ with respect to $\omega$, $u=\varphi-\hat\varphi_0$ and $\phi$ is non-increasing function on $\mathbb{R}$ which will be chosen later. Note that $u$ is uniformly bounded by Lemma~\ref{0rd-order-longtime}, and hence $\phi$ can be chosen so that its derivatives are uniformly bounded for all time.

On $[0,T]$, suppose $G$ attains its maximum at $(x_0,t_0)\in X\times (0,T]$. We note that although $G$ is only continuous in general, we will use the perturbation technique as in \cite[Section 4]{Szekelyhidi2018}. We therefore will assume $G$ to be differentiable when we apply maximum principle. We will choose a coordinate at $(x_0,t_0)$ so that
$$g_{i\bar j}=\delta_{ij} \quad \text{and}\quad (\a_\varphi)_{i\bar j}=\lambda_i\delta_{ij}$$
with $\lambda_1\geq  ...\geq \lambda_n$ so that $\lambda_1=\lambda_{\max}$. As in \cite{Szekelyhidi2018}, we may assume $\lambda_1>\lambda_2$. Furthermore, we may assume $\lambda_1\geq 1$, otherwise the result holds trivially. Recall the formulas of the variation of the largest eigenvalue:
\[
\left\{
\begin{array}{ll}
\nabla_i \lambda_1=\nabla_i \a_{1\bar 1}\\
\nabla_i\nabla_{\bar i} \lambda_1=\displaystyle\nabla_i \nabla_{\bar i} \a_{1\bar 1}+\sum_{k=2}^n \frac{|\nabla_i \a_{1\bar k}|^2+|\nabla_{\bar i} \a_{k\bar 1}|^2}{\lambda_1-\lambda_k}.
\end{array}
\right.
\]
Combining this with Lemma \ref{lemma-evolution-alpha}, we compute
\[
\begin{split}
\Box_L \log \lambda_{1} &=\lambda_1^{-1} \Box_L \lambda_{1}
+\lambda_1^{-2}F^{i\bar i} |\nabla_i \lambda_1|^2\\
&=\lambda_1^{-1} \Box_L \a_{1\bar 1} +\lambda_1^{-2}F^{i\bar i} |\nabla_i \lambda_1|^2-\lambda_1^{-1} F^{i\bar i} \sum_{k=2}^n \frac{|\nabla_i \a_{1\bar k}|^2+|\nabla_{\bar i} \a_{k\bar 1}|^2}{\lambda_1-\lambda_k}\\
&= \lambda_1^{-1} \left[F^{p\bar q,k\bar l}\nabla_{1} \a_{p\bar q} \nabla_{\bar 1} \a_{k\bar l}+F^{p\bar q}\left(R_{p\bar ji}\,^k \, \a_{k\bar q}-R_{p\bar j}\,^{\bar l}_{\bar q}\,\a_{i\bar l}\right) \right]\\
&\quad +\lambda_1^{-2}F^{i\bar i} |\nabla_i \lambda_1|^2-\lambda_1^{-1} F^{i\bar i} \sum_{k=2}^n \frac{|\nabla_i \a_{1\bar k}|^2+|\nabla_{\bar i} \a_{k\bar 1}|^2}{\lambda_1-\lambda_k}.
\end{split}
\]
Using \eqref{derivatives of F} and \eqref{Fii-bound}, we see that
\begin{equation}\label{2nd-order-longtime eqn 1}
\begin{split}
\Box_L \log \lambda_{1}
&\leq  \lambda_1^{-1}F_{pk}|\nabla_1 \a_{i\bar i}|^2+\lambda_1^{-1}\sum_{p\neq q}\frac{F_p-F_q}{\lambda_p-\lambda_q}|\nabla_1 \a_{p\bar q}|^2\\
&\quad -\lambda_1^{-1} F_i \sum_{k=2}^n \frac{|\nabla_i \a_{1\bar k}|^2+|\nabla_{\bar i} \a_{k\bar 1}|^2}{\lambda_1-\lambda_k}+\lambda_1^{-2}F_i |\nabla_i \lambda_1|^2+C_0 \\
 %&\leq  \lambda_1^{-1}F_{pk}|\nabla_1 \a_{i\bar i}|^2+\lambda_1^{-1}\sum_{p=2}^n\frac{F_i-F_1}{\lambda_i-\lambda_1}|\nabla_i\lambda_1|\\
 %&\quad +\lambda_1^{-2}F_i |\nabla_i \lambda_1|^2+C_0 \\
&\leq \lambda_1^{-1}\sum_{i=2}^n\frac{F_i-F_1}{\lambda_i-\lambda_1}|\nabla_i\lambda_1|^2 +\lambda_1^{-2}\sum_{i=1}^nF_i |\nabla_i \lambda_1|^2+C_0\\
&= \lambda_1^{-2}\sum_{i=2}^n\left(\frac{\lambda_1(F_i-F_1)}{\lambda_i-\lambda_1} +F_i\right)|\nabla_i\lambda_1|^2 +\lambda_1^{-2}F_1 |\nabla_1 \lambda_1|^2+C_0,
\end{split}
\end{equation}
where we have used concavity of $F$ and $d\a=0$ in the second-to-last line. To deal with the first term on the right hand side, we use \eqref{Fii-bound} again and compute
\begin{equation}\label{2nd-order-longtime eqn 2}
\begin{split}
&\quad \frac{\lambda_1(F_i-F_1)}{\lambda_i-\lambda_1}+F_i\\
&= -\frac{\csc^2 Q}{1+\lambda_i^2}\frac{\lambda_1^2+\lambda_1\lambda_i}{1+\lambda_1^2} +\frac{\ve \lambda_1 \csc Q}{\prod_{j=1}^n \sqrt{\lambda_j^2+1}}\left(\frac{-(\lambda_1+\lambda_i)\cot Q+\lambda_1\lambda_i-1}{(1+\lambda_1^2)(1+\lambda_i^2)} \right)\\
&\quad + \frac{\csc^2 Q }{1+\lambda_i^2} +\frac{\ve \csc Q (\cot Q-\lambda_i)}{(\lambda_i^2+1)\cdot\prod_{j=1}^n \sqrt{\lambda_j^2+1}}\\
&=\frac{\csc^2 Q}{1+\lambda_i^2} \cdot\frac{1-\lambda_1 \lambda_i}{1+\lambda_1^2}\\
&\quad +\frac{\ve \csc Q}{(\lambda_i^2+1)\cdot\prod_{j=1}^n\sqrt{\lambda_j^2+1}} \left[ \frac{ \cot Q (1-\lambda_1\lambda_i)-(\lambda_1+\lambda_i)}{1+\lambda_1^2}\right] \\
&=\frac{1-\lambda_1 \lambda_i}{1+\lambda_1^2} F_i - \frac{\ve \csc Q }{\prod_{j=1}^n \sqrt{\lambda_j^2+1}}\cdot \frac{\lambda_1}{1+\lambda_1^2}.
\end{split}
\end{equation}

We now squeeze more negativity from the first term. We will use an observation which is originated in \cite{WangYuan2014,Yuan2006}. Lemma \ref{lemma:parabolic-of-L} shows $Q\leq \Theta_0-c_0$. Then we see that $\lambda_{n-1}\geq C_1^{-1}$ since otherwise $Q\geq \arccot \lambda_n+ \arccot\lambda_{n-1}> 2 \arccot C_1^{-1}>\Theta_0-c_0$ if $C_1$ is chosen to be large. Hence, we may assume $\lambda_1>2C_1$ so that $1-\lambda_1 \lambda_i <0$ for $1<i<n$ since otherwise the final result holds trivially. Combining this \eqref{2nd-order-longtime eqn 1} and \eqref{2nd-order-longtime eqn 2}, we arrive at
\[
\begin{split}
\Box_L \log\lambda_1
&\leq \frac{1-\lambda_1 \lambda_n}{1+\lambda_1^2} F_n|\nabla_n \log \lambda_1|^2+ F_1 |\nabla_1 \log \lambda_1|^2+C_0.
\end{split}
\]
Since $G$ attains its maximum at $(x_0,t_0)$,
\[
\nabla \log \lambda_1=-\phi' \nabla u,
\]
so that
\begin{equation}\label{logG-equ-1}
\begin{split}
0\leq \Box_LG &\leq \frac{1-\lambda_1 \lambda_n}{1+\lambda_1^2} F_n(\phi')^2 |\nabla_n u|^2+ F_1(\phi')^2 |\nabla_1 u|^2 +C_0\\
&\quad +\phi' \Box_L u-\phi'' F_i |\nabla_i u|^2.
\end{split}
\end{equation}

Let $\rho,K$ be the constant obtained from Lemma~\ref{sub-sol-TdHYMFlow}. Using Lemma \ref{lemma:parabolic-of-L}, we have $\lambda_{n}\geq-\tan\theta_{0}$ and $Q>c_0$. Then
\[
c_0 \leq \sum_{i=1}^n \arccot(\lambda_i) \leq n \arccot(\lambda_n)
\]
and hence
\begin{equation}\label{bdd-un}
|\lambda_n|\leq C_2.
\end{equation}
Denote $\arccot\lambda_{i}$ by $\theta_{i}$. Then $F_{i}$ can be written as
\[
F_i=\frac{\csc^2 Q}{1+\lambda_i^2}-\frac{\ve \csc^2 Q}{1+\lambda_i^2}\cdot \frac{\sin (Q-\theta_i)}{\prod_{j\neq i}\sqrt{1+\lambda_j^2}}.
\]
We see that if $\ve_0$ is sufficiently small, then for all $i$,
\begin{equation}\label{sim-control-Fi}
\frac{\csc^2 Q}{1+\lambda_i^2} \geq F_i \geq \frac{1}{2}\cdot\frac{\csc^2 Q}{1+\lambda_i^2}.
\end{equation}
Therefore if we further assume
\[
\lambda_1 \geq \max\left\{K+\max_X|\a_{\hat\varphi_0}|_\omega, \frac{10\rho}{1+C_2^2} \right\}
\]
then $|\a_\varphi-\a_{\hat\varphi_0}|_\omega>K$ and $F_1 \leq \rho F_n \leq \rho\sum_{i=1}^n F_i$. Combining this with Lemma \ref{sub-sol-TdHYMFlow} and \eqref{sim-control-Fi},
\begin{equation}\label{lower bound of box u}
\Box_L u > \rho \sum_{i=1}^n F_i \geq \rho F_n \geq \frac{\rho}{2}\cdot\frac{\csc^2 Q}{1+\lambda_n^2}> C_3^{-1},
\end{equation}
where we used Lemma \ref{lemma:parabolic-of-L} and \eqref{bdd-un} in the last inequality.

We also choose $C_4>0$ so that $C_4>\sup_{X\times [0,+\infty)}|u|$ which is possible thanks to Lemma~\ref{0rd-order-longtime}. Pick $\phi(s)=\frac{s^2}{2}-(C_5+C_4)s$ with $C_5=\max\{ 1,C_4,C_3(1+C_0)\}$ so that $2C_5>-\phi'>C_5$ and $\phi''=1$ at $s=u$. Substituting it and \eqref{lower bound of box u} into \eqref{logG-equ-1}, we have
\[
\begin{split}
0&\leq \frac{1-\lambda_1 \lambda_n}{1+\lambda_1^2} F_n(\phi')^2 |\nabla_n u|^2+ F_1(\phi')^2 |\nabla_1 u|^2 +C_0+C_3^{-1}\phi'-\phi'' F_i |\nabla_i u|^2\\
&\leq F_n|\nabla_n u|^2 \left(\frac{1-\lambda_1\lambda_n}{1+\lambda_1^2}-1\right)+4C_5^2 F_1 |\nabla_1u|^2+(C_0-C_3^{-1}C_5)\\[1mm]
&\leq F_n|\nabla_n u|^2 \left(C_2\lambda_1^{-1}-1\right)+{C_6}{\lambda_1^{-2}}|\nabla_1u|^2-1,
\end{split}
\]
where we have used \eqref{bdd-un}, \eqref{sim-control-Fi} and Lemma~\ref{lemma:parabolic-of-L} in the last inequality. Hence, if $\lambda_1>2C_2$, the first bracket in evolution equation is negative and thus
\[
\begin{split}
\lambda_1(x_0,t_0)\leq C_7\sup_{X\times [0,T]}|\nabla u|.
\end{split}
\]

In conclusion, we have shown that
$$\lambda_1(x_0,t_0)\leq \max\left\{ C_7\sup_{X\times [0,T]}|\nabla u|,2C_2,K+\max_X|\a_{\hat\varphi_0}|_\omega, \frac{10\rho}{1+C_2^2},2C_1 \right\}.$$
Since $\nabla u$ is uniformly comparable to $\nabla \varphi$, the assertion now follows by applying maximum principle on $G$ since $u$ is uniformly bounded for all $t\geq 0$.
\end{proof}

With Lemma~\ref{2nd-order-longtime}, we can control the complex Hessian of $\varphi$ using exactly the same blowup argument as in \cite[Lemma 5.7]{Takahashi2020}, see also \cite[Proposition 5.1]{CollinsJacobYau2020}.
\begin{lma}\label{2nd-order-longtime-true}
There exist $C,\ve_0>0$ depending only on $\varphi_0,\hat\varphi_0,\a,\omega,X$ such that for all $\ve<\ve_0$,
$$\sup_{X\times [0,T]}|\ddb\varphi|\leq C.$$
\end{lma}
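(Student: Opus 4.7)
The plan is to reduce the statement to a uniform gradient bound and then derive that bound by a standard blowup--contradiction argument, following the template of \cite[Proposition 5.1]{CollinsJacobYau2020} and \cite[Lemma 5.7]{Takahashi2020}. By Lemma~\ref{2nd-order-longtime}, once we know $\sup_{X\times[0,T]} |\nabla \vp|$ is bounded by a constant depending only on $\vp_0, \hat{\vp}_0, \alpha, \omega, X$ (independent of $T$ and $\ve$), the desired complex Hessian bound follows at once.

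Suppose toward a contradiction that no such uniform gradient bound exists. Then we can extract sequences $\ve_k\in (0,\ve_0)$, $t_k > 0$, and $x_k \in X$ with
\[
M_k := |\nabla \vp_{\ve_k}|(x_k, t_k) = \sup_{X\times[0,t_k]} |\nabla \vp_{\ve_k}| \longrightarrow \infty.
\]
After passing to a subsequence, $x_k \to x_\infty \in X$. Fix a holomorphic coordinate chart around $x_\infty$ in which $\omega$ is close to the Euclidean form, and define the parabolic rescaling
\[
\tilde{\vp}_k(y, s) = \vp_{\ve_k}\bigl(x_k + M_k^{-1} y,\ t_k + M_k^{-2} s\bigr),
\]
whose domain exhausts $\C^n \times (-\infty, 0]$ as $k\to\infty$. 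Three bounds then hold uniformly in $k$: $|\tilde{\vp}_k| \leq C_0$ from Lemma~\ref{0rd-order-longtime}; $|\nabla \tilde{\vp}_k| \leq 1$ with $|\nabla \tilde{\vp}_k|(0, 0) = 1$ by construction; and, most crucially, $|\ddb \tilde{\vp}_k| \leq C M_k^{-2}(1 + M_k) \to 0$ by Lemma~\ref{2nd-order-longtime}.

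With these uniform $C^{1,1}$ bounds (and the rescaled complex Hessian vanishing in the limit), Arzel\`a--Ascoli furnishes a further subsequence converging in $C^{1,\beta}_{\mathrm{loc}}$ to a bounded limit $\tilde{\vp}_\infty$ on $\C^n$ satisfying $\ddb \tilde{\vp}_\infty = 0$ and $|\nabla \tilde{\vp}_\infty|(0) = 1$. The vanishing of $\ddb \tilde{\vp}_\infty$ makes $\tilde{\vp}_\infty$ pluriharmonic, hence harmonic, on $\C^n$; Liouville's theorem then forces $\tilde{\vp}_\infty$ to be constant, contradicting $|\nabla \tilde{\vp}_\infty|(0) = 1$. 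The most delicate point is guaranteeing that the normalised gradient at the origin survives the passage to the limit, which requires $C^{1,\beta}$ rather than merely $C^{0}$ compactness; this is precisely what the vanishing of the rescaled Hessian delivers. The contradiction closes the argument, and Lemma~\ref{2nd-order-longtime} then yields the desired bound on $|\ddb \vp|$.
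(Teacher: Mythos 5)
Your proposal is correct and follows essentially the same route as the paper, which simply cites the blowup/Liouville argument of Takahashi (Lemma 5.7) and Collins-Jacob-Yau (Proposition 5.1). You have spelled out the contradiction argument that the paper delegates to those references, correctly identifying that the $L^\infty$ bound (Lemma~\ref{0rd-order-longtime}), the quasi-plurisubharmonicity (Lemma~\ref{lemma:parabolic-of-L}), and the linear Hessian-gradient bound (Lemma~\ref{2nd-order-longtime}) are exactly the inputs that make the rescaled complex Hessian decay and force the blowup limit to be a bounded pluriharmonic function with nonzero gradient -- a contradiction.
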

\begin{proof}
Since the argument in  \cite[Lemma 5.7]{Takahashi2020} is only based on zero order estimate, quasi-subharmonicity and the fact that the complex Hessian is controlled by its gradient quadratically. The proof can be carried over thanks to Lemma~\ref{lemma:parabolic-of-L}, \ref{0rd-order-longtime} and \ref{2nd-order-longtime},
\end{proof}

\begin{proof}[Proof of Theorem~\ref{smooth convergence}]
This is by now standard using the concavity. We include a sketch of the proof for the reader's convenience. Thanks to Lemma~\ref{lemma:parabolic-of-L}, \ref{0rd-order-longtime} and \ref{2nd-order-longtime}, $\varphi,\dot\varphi$ are uniformly bounded for all $t\geq 0$. By \cite{Evans1982,Krylov1982}, the concavity of $F_{\omega,\ve}$ implies a uniform $C^{2,\a}$ estimates for all $t\geq 0$, e.g., see the proof of \cite[Theorem 4.1]{Gill2011} which only relies on the concavity. The higher order estimates follows from standard parabolic theory.

It remains to establish the convergence. Since $\dot\varphi$ satisfies $$\Box_L \dot\varphi=0$$ and $F^{i\bar j}_{\omega,\ve}\partial_i\partial_{\bar j}$ is uniformly elliptic, we can apply the standard argument (see e.g. \cite[Section 6]{Gill2011}) to show
\[
\mathrm{osc}_X\dot\varphi\leq Ce^{-C^{-1}t}
\]
for some $C(\varphi_0,\hat\varphi_0,\a,\omega,X)>0$. Moreover, by \eqref{twisted dHYM flow 2} and \eqref{CY-const eqn},
\[
\int_X \dot{\vp} \im\left(\a_{\varphi}+\sqrt{-1}\omega \right)^n = 0.
\]
This shows for all $t\geq 0$, there is $x_t\in X$ so that $\dot\varphi(x_t,t)=0$. It then follows that $\dot\varphi$ decays to zero exponentially fast as $t\to +\infty$. Together with the boundedness of $\varphi$, it is clear that $\varphi(t)\to \varphi_{\infty,\ve}$ as $t\to+\infty$ for some function $\varphi_{\infty,\ve}$. By the higher order estimates, this convergence is $C^{\infty}$ and so $\varphi_{\infty,\ve}$ is a smooth function which solves the twisted dHYM equation
$$F_{\omega,\ve}(\a_{\varphi_{\infty,\ve}})=\cot\theta_0 +a_0\ve.$$
This completes the proof.
\end{proof}

\section{Technical lemmas}\label{Section: Technical lemmas}
In this section, we will prove some technical lemmas, which will be used in the proof of Theorem \ref{main theorem}.

\subsection{Continuity of operators}
%Some inequalities for $P_{\omega}$, $Q_{\omega}$ and $F_{\omega,\ve}$}
In this subsection, we want to discuss the continuity of the operators $P_\omega,Q_\omega$ and $F_{\omega,\ve}$ with respect to $\omega$. We begin with the continuity of $P_\omega$ and $Q_\omega$.

\begin{lma}\label{Q P continuity}
For any $c_{0}\in(0,\pi)$, there exists a constant $\sigma_{0}(c_{0},n)$ such that the following holds. Let $\omega_1$ and $\omega_{2}$ be two metrics and $\alpha$ be a real $(1,1)$-form satisfying
\[
(1-\sigma^{5})\omega_{1} \leq \omega_{2} \leq (1+\sigma^{5})\omega_{1}
\]
for some $\sigma\in(0,\sigma_{0})$. Then
\begin{enumerate}\setlength{\itemsep}{1mm}
\item[(i)] $Q_{\omega_{2}}(\alpha) \leq Q_{\omega_{1}}(\alpha)+\sigma$ if $Q_{\omega_{1}}(\alpha)\in(0,\pi-c_{0})$;
\item[(ii)]$P_{\omega_{2}}(\alpha) \leq P_{\omega_{1}}(\alpha)+\sigma$ if $P_{\omega_{1}}(\alpha)\in(0,\pi-c_{0})$.
\end{enumerate}
\end{lma}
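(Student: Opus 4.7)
My plan is to interpolate between $\omega_{1}$ and $\omega_{2}$ by the affine path $\omega_{t}=(1-t)\omega_{1}+t\omega_{2}$ for $t\in[0,1]$, track the eigenvalues of $\alpha$ with respect to $\omega_{t}$ along this path, and integrate the resulting ODE. The hypothesis yields $(1-\sigma^{5})\omega_{1}\leq\omega_{t}\leq(1+\sigma^{5})\omega_{1}$ for all $t$, so the velocity $\eta:=\omega_{2}-\omega_{1}=\dot\omega_{t}$ satisfies $|\eta|_{\omega_{t}}\leq C\sigma^{5}$ for a constant $C$ depending only on $n$ and for $\sigma$ small enough.

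Work pointwise on $X$. After a small generic perturbation of $\alpha$ (and a limiting argument at the end), I may assume the eigenvalues $\lambda_{1}(t)<\cdots<\lambda_{n}(t)$ of $\alpha$ with respect to $\omega_{t}$ are all simple, hence smooth in $t$, and choose a smoothly varying $\omega_{t}$-orthonormal frame $\{e_{i}(t)\}$ diagonalizing $\alpha$. Differentiating the identities $\omega_{t}(e_{i},e_{i})=1$ and $\alpha(e_{i},e_{i})=\lambda_{i}$ and using that the off-diagonal coefficients of $\dot e_{i}$ drop out by the eigenvalue relation $\alpha(e_{j},\cdot)=\lambda_{j}\omega_{t}(e_{j},\cdot)$, one arrives at the clean formula
\[
\dot\lambda_{i}=-\lambda_{i}\,\eta(e_{i},e_{i}).
\]
Consequently
\[
\frac{d}{dt}Q_{\omega_{t}}(\alpha)=-\sum_{i=1}^{n}\frac{\dot\lambda_{i}}{1+\lambda_{i}^{2}}=\sum_{i=1}^{n}\frac{\lambda_{i}\,\eta(e_{i},e_{i})}{1+\lambda_{i}^{2}},
\]
and the elementary bounds $|\lambda/(1+\lambda^{2})|\leq 1/2$ together with $|\eta(e_{i},e_{i})|\leq C\sigma^{5}$ yield $|\tfrac{d}{dt}Q_{\omega_{t}}(\alpha)|\leq C n\sigma^{5}/2$. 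Integrating from $0$ to $1$ gives $|Q_{\omega_{2}}(\alpha)-Q_{\omega_{1}}(\alpha)|\leq C n\sigma^{5}\leq \sigma$ once $\sigma_{0}$ is chosen sufficiently small in terms of $n$, which proves (i).

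For (ii) I observe that since $\arccot$ is strictly decreasing, $P_{\omega}(\alpha)=Q_{\omega}(\alpha)-\arccot(\lambda_{\max}(\omega,\alpha))$, where $\lambda_{\max}(\omega,\alpha)$ denotes the largest eigenvalue of $\alpha$ with respect to $\omega$. Applying the same ODE analysis to the single Lipschitz function $t\mapsto \arccot(\lambda_{\max}(\omega_{t},\alpha))$ bounds its total variation on $[0,1]$ by $C\sigma^{5}$, and combining with (i) yields (ii).

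The main technical wrinkle is the possible non-smoothness of individual $\lambda_{i}(t)$ at collision points, which is why I allowed the generic perturbation of $\alpha$ above; alternatively one may note that $t\mapsto Q_{\omega_{t}}(\alpha)$ is globally Lipschitz and that the derivative bound holds almost everywhere. The stated constraint $Q_{\omega_{1}}(\alpha)<\pi-c_{0}$ (resp.\ $P_{\omega_{1}}(\alpha)<\pi-c_{0}$) plays no direct role in this estimate—the boundedness of $\lambda/(1+\lambda^{2})$ is unconditional—but is presumably retained to streamline the subsequent applications.
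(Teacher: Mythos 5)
Your interpolation argument is correct and gives a genuinely different, more elementary proof than the paper's. The paper proves (i) by first adding $\sigma\omega_2$ to $\alpha$, then reducing the claim to $\Im\bigl(e^{-\sqrt{-1}\theta'}(\alpha+\sigma\omega_2+\sqrt{-1}\omega_2)^n\bigr)\leq 0$, and estimating each term of a binomial expansion in $\omega_d=\omega_2-\omega_1$ via the technical \cite[Claim 2.7]{ChuLeeTakahashi2021}; this route actually needs $Q_{\omega_1}(\alpha)+\sigma<\pi$ because the sign characterization of $Q\leq\theta'$ breaks for $\theta'\geq\pi$. Your ODE route, by contrast, tracks $Q_{\omega_t}(\alpha)$ along the affine path via $\dot\lambda_i=-\lambda_i\,\eta(e_i,e_i)$ and the sharp pointwise bound $|\lambda/(1+\lambda^2)|\leq 1/2$, giving the two-sided estimate $|Q_{\omega_2}(\alpha)-Q_{\omega_1}(\alpha)|\leq 2n\sigma^5$ with $\sigma_0$ depending only on $n$ and no phase restriction at all --- so it is both simpler and slightly stronger. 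Two small polish points: for (i) the perturbation or a.e.-differentiability discussion is unnecessary, since $Q_\omega(\alpha)=\mathrm{tr}\,\arccot(\omega^{-1/2}\alpha\omega^{-1/2})$ is a real-analytic function of the Hermitian matrix regardless of eigenvalue collisions, and the derivative formula $\frac{d}{dt}Q_{\omega_t}(\alpha)=-\mathrm{tr}\bigl((I+A_t^2)^{-1}\dot A_t\bigr)$ holds unconditionally; for (ii) the function $t\mapsto\arccot(\lambda_{\max})$ genuinely may fail to be differentiable at collisions, but it is Lipschitz and the one-sided derivatives at a collision are the extremal eigenvalues of $\dot A_t$ restricted to the top eigenspace, which are still bounded in modulus by $2|\lambda_{\max}|\sigma^5$, so the same $|\lambda/(1+\lambda^2)|\leq 1/2$ bound applies and the integration argument goes through without perturbing $\alpha$.
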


\begin{proof}
The proofs of (i) and (ii) are similar, which are almost identical to that of \cite[Proposition 2.5]{ChuLeeTakahashi2021}. For the reader's convenience, we give a sketch of the proof of (i). Since
\[
Q_{\omega_{2}}(\alpha)\leq Q_{\omega_{2}}(\alpha+\sigma\omega_{2})+n\sigma,
\]
after relabelling, it suffices to prove
\[
Q_{\omega_{2}}(\alpha+\sigma\omega_{2}) \leq Q_{\omega_{1}}(\alpha)+\sigma.
\]
Write $\theta=Q_{\omega_{1}}(\alpha)$ and $\theta'=\theta+\sigma$. The above inequality is equivalent to
\[
\Im\left( e^{-\sqrt{-1}\theta'}(\alpha+\sigma\omega_2+\sqrt{-1}\omega_2)^n \right) \leq 0.
\]

Set $\omega_{d}=\omega_{2}-\omega_{1}$ so that
\[
\alpha+\sigma\omega_2+\sqrt{-1}\omega_2
= (\alpha+\sqrt{-1}\omega_{1})+(\sigma\omega_{2}+\sqrt{-1}\omega_{d}).
\]
We compute
\[
\begin{split}
& \Im\left( e^{-\sqrt{-1}\theta'}(\alpha+\sigma\omega_2+\sqrt{-1}\omega_2)^n \right) \\
= {} & \sum_{k=0}^n \binom{n}{k}\Im\left( e^{-\sqrt{-1}\theta'} (\alpha+\sqrt{-1}\omega_1)^{n-k} \wedge (\sigma \omega_2+\sqrt{-1} \omega_d)^k  \right).
\end{split}
\]
For each $0\leq k\leq n$, we write
\[
\mathbf{T}_{k} = \Im\left( e^{-\sqrt{-1}\theta'}
(\alpha+\sqrt{-1}\omega_1)^{n-k} \wedge (\sigma \omega_2+\sqrt{-1} \omega_d)^k  \right).
\]
It is clear that $\mathbf{T}_{0}\leq 0$ since $Q_{\omega_{1}}(\alpha)\leq\theta'$. It suffices to show that $\mathbf{T}_{k}\leq0$ for any $1\leq k\leq n$. We compute
\[
\begin{split}
\mathbf{T}_{k}
= {} & \sum_{l=0}^{k-1}\binom{k}{l}\Im\left( e^{-\sqrt{-1}\theta'}(\alpha+\sqrt{-1}\omega_{1})^{n-k} \wedge (\sigma \omega_{2})^{l} \wedge (\sqrt{-1}\omega_{d})^{k-l}\right) \\
& +\Im \left( e^{-\sqrt{-1}\theta'} (\alpha+\sqrt{-1}\omega_{1})^{n-k} \right)\wedge (\sigma\omega_{2})^{k}.
\end{split}
\]
By the almost identical argument of \cite[Claim 2.7]{ChuLeeTakahashi2021} (replacing $\omega,\chi_{2},\chi_{3},\chi_{d}$ by $\alpha,\omega_{1},\omega_{2},\omega_{d}$),
\[
\begin{split}
& \Im\left( e^{-\sqrt{-1}\theta'}(\alpha+\sqrt{-1}\omega_{1})^{n-k} \wedge (\sigma \omega_{2})^{l} \wedge (\sqrt{-1}\omega_{d})^{k-l}\right) \\
\leq {} & -2\sigma^{k+2}\Im\left(e^{-\sqrt{-1}\theta'}(\alpha+\sqrt{-1}\omega_{1})^{n-k} \right) \wedge \omega_{2}^{k}.
\end{split}
\]
Then we obtain
\[
\mathbf{T}_{k} \leq (\sigma^{k}-C_{n}\sigma^{k+2})
\Im\left(e^{-\sqrt{-1}\theta'}(\alpha+\sqrt{-1}\omega_{1})^{n-k} \right) \wedge \omega_{2}^{k} \leq 0,
\]
for some constant $C_{n}$ depending only on $n$. Choosing $\sigma_{0}$ sufficiently small, we are done.
\end{proof}

\bigskip

\begin{lma}\label{F continuity}
Let $\omega_1$ and $\omega_{2}$ be two metrics, and $\alpha$ be a real $(1,1)$-form. Suppose that $Q_{\omega_{1}}(\alpha)\in(c_{0},\pi-c_{0})$. There exists a constant $\ve_{0}(c_{0},n)$ such that the following holds. For any $\ve\in(0,\ve_{0})$, there exists $\sigma_{\ve}(\ve,c_{0},n)$ such that if
\[
(1-\sigma^{5})\omega_{1} \leq \omega_{2} \leq (1+\sigma^{5})\omega_{1}
\]
for some $\sigma\in(0,\sigma_{\ve})$, then
\[
F_{\omega_{2},\ve}(\alpha) \geq F_{\omega_{1},\ve}(\alpha)-\ve^{2}.
\]
\end{lma}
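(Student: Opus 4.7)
The plan is to decompose $F_{\omega,\ve}$ pointwise as
\[
F_{\omega,\ve}(\alpha) = \cot Q_{\omega}(\alpha) + \frac{\ve}{\sin Q_{\omega}(\alpha)\cdot \pi_{\omega}(\alpha)}, \qquad \pi_{\omega}(\alpha) := \prod_{k=1}^{n}\sqrt{1+\lambda_k^2},
\]
where $\lambda_k$ are the eigenvalues of $\alpha$ with respect to $\omega$, and to estimate the two summands separately. The main difficulty is that $\pi_\omega(\alpha)$ can be arbitrarily large even under $Q_\omega(\alpha)\in(c_0,\pi-c_0)$ (individual $\lambda_k$ may blow up), so the perturbation estimate for the $\ve$-term must be uniform in the size of the eigenvalues.

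For the $\cot Q$ term, I would apply Lemma~\ref{Q P continuity} directly to obtain $Q_{\omega_2}(\alpha) \leq Q_{\omega_1}(\alpha) + \sigma$, and then apply it again with the roles of $\omega_1,\omega_2$ swapped (permissible because $\omega_1 \leq (1 + 2\sigma^5)\omega_2$ for small $\sigma$, and the first application supplies the a priori bound $Q_{\omega_2}(\alpha)\le\pi-c_0/2$), yielding $|Q_{\omega_1}(\alpha) - Q_{\omega_2}(\alpha)| \leq C(c_0)\sigma$. Both values then lie in $(c_0/2,\pi-c_0/2)$, so Lipschitz continuity of $\cot$ on that interval gives $|\cot Q_{\omega_2}(\alpha) - \cot Q_{\omega_1}(\alpha)| \leq C(c_0)\sigma$.

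For the $\ve$-term I would work pointwise: fix a point and choose holomorphic coordinates so that $\omega_1$ is the identity matrix and $\alpha = \mathrm{diag}(\lambda_1, \ldots, \lambda_n)$; then $\omega_2$ corresponds to a Hermitian matrix $g = I + h$ with $\|h\|\leq\sigma^5$. Using $(\alpha+\sqrt{-1}\omega_2)^n/\omega_2^n = \det(\alpha+\sqrt{-1}g)/\det(g)$, I would factor
\[
\det(\alpha + \sqrt{-1}g) = \det(\alpha + \sqrt{-1}I) \cdot \det\bigl(I + \sqrt{-1}(\alpha+\sqrt{-1}I)^{-1}h\bigr).
\]
The key observation which absorbs the unbounded eigenvalues is that $(\alpha+\sqrt{-1}I)^{-1} = \mathrm{diag}\bigl((\lambda_j + \sqrt{-1})^{-1}\bigr)$ has operator norm at most $1$ regardless of the $\lambda_j$, so the second determinant equals $1 + O_n(\sigma^5)$. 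Writing $\det(\alpha+\sqrt{-1}I) = U + \sqrt{-1}V$ with $V = \pi_{\omega_1}\sin Q_{\omega_1}$ and $|U|,|V|\leq \pi_{\omega_1}$, a direct expansion combined with $\det g = 1 + O_n(\sigma^5)$ yields
\[
|\pi_{\omega_2}\sin Q_{\omega_2} - \pi_{\omega_1}\sin Q_{\omega_1}| \leq C_n\, \pi_{\omega_1}\sigma^5.
\]
Dividing and using the lower bound $\pi_{\omega_i}\sin Q_{\omega_i}\geq \tfrac{1}{2}\pi_{\omega_1}\sin c_0$ (valid for $\sigma$ small), the $\pi_{\omega_1}$ factor cancels, giving $\bigl|\tfrac{\ve}{\pi_{\omega_2}\sin Q_{\omega_2}} - \tfrac{\ve}{\pi_{\omega_1}\sin Q_{\omega_1}}\bigr| \leq C(c_0,n)\,\ve\,\sigma^5$.

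Combining the two estimates, $|F_{\omega_2,\ve}(\alpha) - F_{\omega_1,\ve}(\alpha)| \leq C(c_0)\,\sigma + C(c_0,n)\,\ve\,\sigma^5$, and one picks $\sigma_\ve$ small enough (essentially $\sigma_\ve \sim \ve^2/C(c_0)$) to force the right-hand side below $\ve^2$; the constant $\ve_0(c_0,n)$ only enters to guarantee that $\sigma_\ve$ stays below the threshold $\sigma_0$ of Lemma~\ref{Q P continuity} (applied with $c_0/2$, and with $\omega_1,\omega_2$ swapped) and within the perturbative regime $C_n\sigma^5\leq \tfrac{1}{2}\sin c_0$.
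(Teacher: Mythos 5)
Your proof is correct, and for the harder half (the $\ve$-term) it takes a genuinely different and arguably cleaner route than the paper's. Both you and the paper handle the $\cot Q$ term identically via two applications of Lemma~\ref{Q P continuity} and Lipschitz continuity of $\cot$ on $(c_0/2,\pi-c_0/2)$. For the $\ve$-term, however, the paper uses Weyl's inequality to compare the eigenvalues $\lambda_i$ and $\mu_i$ of $\alpha$ with respect to $\omega_1$ and $\omega_2$; since Weyl only gives $|\lambda_i-\mu_i|\leq C\sigma\lambda_1$, which degrades as $\lambda_1\to\infty$, the paper is forced into a case split ($\lambda_1\geq\ve^{-3}$ versus $\lambda_1<\ve^{-3}$), exploiting that when $\lambda_1$ is huge the $\ve$-term is already $O(\ve^3)$ for $\omega_1$ and is in any case nonnegative for $\omega_2$, so only a one-sided bound is needed. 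Your determinant factorization $\det(\alpha+\sqrt{-1}g)=\det(\alpha+\sqrt{-1}I)\det\bigl(I+\sqrt{-1}(\alpha+\sqrt{-1}I)^{-1}h\bigr)$, combined with the observation that $\|(\alpha+\sqrt{-1}I)^{-1}\|\leq 1$ uniformly in the eigenvalues, sidesteps the case split entirely and yields a two-sided estimate $\bigl|\tfrac{\ve}{\pi_{\omega_2}\sin Q_{\omega_2}}-\tfrac{\ve}{\pi_{\omega_1}\sin Q_{\omega_1}}\bigr|\leq C(c_0,n)\ve\sigma^5$ that is uniform in $\lambda$. This is sharper and more transparent; the paper's approach buys nothing over yours in this instance except avoiding the need to set up coordinates and the determinant algebra. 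One cosmetic remark: with your uniform bound $|F_{\omega_2,\ve}-F_{\omega_1,\ve}|\leq C(c_0)\sigma+C(c_0,n)\ve\sigma^5$ the threshold $\ve_0$ plays no essential role (the choice of $\sigma_\ve$ small enough works for every $\ve$), whereas in the paper $\ve_0$ is genuinely needed to absorb the $\ve^3$ and $\ve^4$ error terms produced by the case split.
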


\begin{proof}
For $k=1,2$, recall
\begin{equation}\label{F continuity eqn 4}
F_{\omega_{k},\ve}(\alpha)
= \cot(Q_{\omega_{k}}(\alpha))+\frac{\ve\omega_{k}^{n}}
{\Im(\alpha+\sqrt{-1}\omega_{k})^{n}}.
\end{equation}
For the first term in \eqref{F continuity eqn 4}, using Lemma \ref{Q P continuity} twice,
\begin{equation}\label{F continuity eqn 1}
|Q_{\omega_{1}}(\alpha)-Q_{\omega_{2}}(\alpha)| \leq \sigma, \quad
\frac{c_{0}}{2} < Q_{\omega_{1}}(\alpha),\; Q_{\omega_{2}}(\alpha) < \pi-\frac{c_{0}}{2},
\end{equation}
and so
\begin{equation}\label{F continuity eqn 2}
|\cot(Q_{\omega_{1}}(\alpha))-\cot(Q_{\omega_{2}}(\alpha))| \leq C\sigma.
\end{equation}

Next we deal with the second term of \eqref{F continuity eqn 4}. Let $\lambda_{1}\geq\ldots\geq\lambda_{n}$ and $\mu_{1}\geq\ldots\geq\mu_{n}$ be eigenvalues of $\alpha$ with respect to $\omega_{1}$ and $\omega_{2}$. Then
\begin{equation}\label{F continuity eqn 3}
\frac{\ve\omega_{1}^{n}}{\Im(\alpha+\sqrt{-1}\omega_{1})^{n}}
= \frac{\ve}{\sin(Q_{\omega_{1}}(\alpha))\prod_{i=1}^{n}\sqrt{1+\lambda_{i}^{2}}}
\end{equation}
and
\[
\frac{\ve\omega_{2}^{n}}{\Im(\alpha+\sqrt{-1}\omega_{2})^{n}}
= \frac{\ve}{\sin(Q_{\omega_{2}}(\alpha))\prod_{i=1}^{n}\sqrt{1+\mu_{i}^{2}}}.
\]
We split the proof into two cases.

\bigskip
\noindent
{\bf Case 1:} $\lambda_{1}\geq\ve^{-3}$.
\bigskip

By \eqref{F continuity eqn 1} and \eqref{F continuity eqn 3}, $\lambda_{1}\geq\ve^{-3}$ implies
\[
\frac{\ve\omega_{1}^{n}}{\Im(\alpha+\sqrt{-1}\omega_{1})^{n}}
\leq \ve^{3}.
\]
Combining this with \eqref{F continuity eqn 2},
\[
F_{\omega_{2},\ve}(\alpha)-F_{\omega_{1},\ve}(\alpha)
\geq -C\sigma-\frac{\ve\omega_{1}^{n}}{\Im(\alpha+\sqrt{-1}\omega_{1})^{n}}
\geq -C\sigma-\ve^{3}.
\]
After choosing $\ve_{0}$ and $\sigma_{\ve}$ sufficiently small, we are done.

\bigskip
\noindent
{\bf Case 2:} $\lambda_{1}<\ve^{-3}$.
\bigskip

Since $Q_{\omega_{1}}(\alpha)\in(c_{0},\pi-c_{0})$, we have $|\lambda_{i}|\leq C\lambda_{1}$ for each $i$. By Weyl's inequality, for each $i$
\[
|\lambda_{i}-\mu_{i}| \leq C\sigma\lambda_{1} \leq \ve^{-3}\sigma.
\]
By choosing $\sigma_{\ve}$ sufficiently small, for any $\sigma\in(0,\sigma_{\ve})$, we have $|\lambda_{i}-\mu_{i}|\leq\ve^{5}$. Combining this with \eqref{F continuity eqn 1},
\[
\left|\frac{\ve}{\sin(Q_{\omega_{2}}(\alpha))\prod_{i=1}^{n}\sqrt{1+\mu_{i}^{2}}}
-\frac{\ve}{\sin(Q_{\omega_{1}}(\alpha))\prod_{i=1}^{n}\sqrt{1+\lambda_{i}^{2}}}\right|
\leq C\ve\sigma+C\ve^{4}.
\]
Recalling \eqref{F continuity eqn 2}, the above shows
\[
F_{\omega_{2},\ve}(\alpha)-F_{\omega_{1},\ve}(\alpha) \geq -C\sigma-C\ve^{4}.
\]
After shrinking $\ve_{0}$ and $\sigma_{\ve}$ if necessary, we are done.
\end{proof}

\begin{lma}\label{Q F P relationship}
For $0<\theta<\Theta<\pi$, there exist constants $c_{0}(\theta,\Theta,n)$ and $\ve_{0}(\theta,\Theta,n)$ such that if $\ve\in(0,\ve_{0})$, $Q_{\omega}(\alpha)<\Theta$ and $F_{\omega,\ve}(\alpha)\geq\cot(\theta)+\ve$, then
\[
P_{\omega}(\alpha) \leq \theta-c_{0}\ve.
\]
\end{lma}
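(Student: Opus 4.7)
The plan is to reduce the statement to a one-variable monotonicity question in the largest eigenvalue. First I would diagonalize $\alpha$ with respect to $\omega$ at a chosen point and order the eigenvalues $\lambda_1\geq\cdots\geq\lambda_n$, so that (since $\arccot$ is strictly decreasing on $\mathbb{R}$) the maximum in the definition of $P$ is attained at $k=1$, giving $P(\lambda)=\sum_{i\geq 2}\arccot(\lambda_i)$, independent of $\lambda_1$. Factoring $\prod_j(\lambda_j+\sqrt{-1})=(\lambda_1+\sqrt{-1})\cdot A\,e^{\sqrt{-1}P}$, where $A=\prod_{i\geq 2}\sqrt{1+\lambda_i^2}$, yields the explicit formula
\[
F_\ve(\lambda)=\frac{\lambda_1 A\cos P-A\sin P+\ve}{\lambda_1 A\sin P+A\cos P}.
\]

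The key step is to view $F_\ve$ as a function of $\lambda_1$ alone (with $\lambda_2,\ldots,\lambda_n$, and hence $P$ and $A$, held fixed) and differentiate. A routine quotient-rule computation gives
\[
\frac{dF_\ve}{d\lambda_1}=\frac{A(A-\ve\sin P)}{(\lambda_1 A\sin P+A\cos P)^2},
\]
which is strictly positive whenever $\ve_0<1$, since $A\geq 1$. Moreover, dividing numerator and denominator by $\lambda_1$ gives $\lim_{\lambda_1\to\infty}F_\ve=\cot P$. Along the ray $\lambda_1\nearrow\infty$, the phase $Q=P+\arccot(\lambda_1)$ decreases from its initial value down to $P$, so it stays in $(0,\pi)$ (using that $P>0$ as a sum of positive $\arccot$ values, and $P\leq Q<\Theta<\pi$), and the path remains in the domain of $F_\ve$. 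Combining monotonicity with the limit gives the key bound $F_\ve(\lambda)\leq\cot P$.

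Feeding this into the hypothesis $F_\ve(\lambda)\geq\cot\theta+\ve$ yields $\cot P\geq\cot\theta+\ve$, and inverting the strictly decreasing bijection $\cot\colon(0,\pi)\to\mathbb{R}$ gives $P\leq\cot^{-1}(\cot\theta+\ve)$. A quantitative form then follows from the fundamental theorem of calculus:
\[
\theta-P\geq\int_{\cot\theta}^{\cot\theta+\ve}\frac{dx}{1+x^2}\geq\frac{\ve}{1+(|\cot\theta|+\ve_0)^2}=:c_0\ve,
\]
so $P\leq\theta-c_0\ve$ with $c_0>0$ depending only on $\theta$ and $\ve_0$, and one takes $\ve_0=\ve_0(\theta,\Theta,n)$ sufficiently small (say $\ve_0<1$).

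The argument is essentially elementary once the explicit formula is in hand, so I do not expect a serious obstacle. The one point that requires a little care is verifying that the ray $\lambda_1\nearrow\infty$ stays in the domain of $F_\ve$, which is immediate from $P,Q\in(0,\pi)$. Notably, neither the full concavity of $F_\ve$ nor the hypothesis $n\geq 4$ from Lemma~\ref{F properties} is invoked here; only the monotonicity in $\lambda_1$, produced by direct computation.
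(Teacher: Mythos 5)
Your proof is correct and takes a more streamlined route than the paper's. You establish a single clean inequality $F_\ve(\lambda)\leq\cot P$ by writing $F_\ve$ explicitly as a function of $\lambda_1$ alone (with $\lambda_2,\dots,\lambda_n$ fixed), showing it is increasing in $\lambda_1$ (derivative $A(A-\ve\sin P)/D^2>0$ since $\ve<1\leq A$), and taking the limit $\lambda_1\to\infty$; combined with the hypothesis $F_\ve\geq\cot\theta+\ve$ this yields $\cot P\geq\cot\theta+\ve$ and then $P\leq\theta-c_0\ve$ by inverting $\cot$ via the fundamental theorem of calculus. The paper instead writes $F_\ve=\cot Q+\ve/(\sin Q\prod_i\sqrt{1+\lambda_i^2})$, reduces to the case $P\geq\theta/2$ (so that $\sin Q\geq 1/C_1$), and splits into two cases on $\lambda_1$: when $\lambda_1\geq 2C_1$ the $\ve$-term is at most $\ve/2$ and $\cot Q\leq\cot P$ finishes; when $\lambda_1<2C_1$ a mean-value-theorem gap $\cot Q\leq\cot P-C_2^{-1}\arccot(2C_1)$ absorbs the (bounded) $\ve$-term. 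Both routes land on an intermediate bound $\cot P\geq\cot\theta+\ve/2$ (yours gives the slightly sharper $\cot\theta+\ve$) before the same final conversion step. Your argument avoids the case split and the auxiliary constants $C_1,C_2,C_3$, and makes visible that $c_0$ and $\ve_0$ really only need to depend on $\theta$; one could even bypass the monotonicity-plus-limit step with the direct computation $F_\ve-\cot P=(\ve\sin P-A)/\bigl(A\sin P\,(\lambda_1\sin P+\cos P)\bigr)<0$, which follows from $A\geq 1>\ve$ and $\lambda_1\sin P+\cos P=\sqrt{1+\lambda_1^2}\sin Q>0$.
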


\begin{proof}
It suffices to show that
\[
\cot(P_{\omega}(\alpha)) \geq \cot(\theta)+\frac{\ve}{2}.
\]
Indeed, this implies
\[
P_{\omega}(\alpha) \leq \arccot\left(\cot(\theta)+\frac{\ve}{2}\right) \leq \theta-c_{0}\ve.
\]

Let $\lambda_{1}\geq\ldots\geq\lambda_{n}$ be the eigenvalues of $\alpha$ with respect to $\omega$. Then we have
\[
P_{\omega}(\alpha) = \sum_{i=2}^{n}\arccot(\lambda_{i}), \ \
Q_{\omega}(\alpha) = \arccot(\lambda_{1})+P_{\omega}(\alpha)
\]
and
\begin{equation}\label{Q F P relationship eqn 1}
F_{\omega,\ve}(\alpha) = \cot(Q_{\omega}(\alpha))+\frac{\ve}{\sin(Q_{\omega}(\alpha))
\prod_{i=1}^{n}\sqrt{1+\lambda_{i}^{2}}}.
\end{equation}
Without loss of generality, we may assume %that $P_{\omega}(\alpha)\geq\frac{\theta}{2}$, and so
\begin{equation}\label{Q F P relationship eqn 4}
0 < \frac{\theta}{2} \leq P_{\omega}(\alpha) < Q_{\omega}(\alpha) < \Theta < \pi.
\end{equation}
This shows
\[
\sin(Q_{\omega}(\alpha)) \geq \frac{1}{C_{1}} > 0.
\]
We split the proof into two cases.

\bigskip
\noindent
{\bf Case 1:} $\lambda_{1}\geq 2C_{1}$.
\bigskip

In this case, we have
\[
\sin(Q_{\omega}(\alpha))\prod_{i=1}^{n}\sqrt{\lambda_{i}^{2}+1}
\geq \frac{1}{C_{1}}\lambda_{1} \geq 2.
\]
Substituting this into \eqref{Q F P relationship eqn 1} and using assumption $F_{\omega,\ve}(\alpha)\geq\cot(\theta)+\ve$,
\[
\cot(\theta)+\ve \leq F_{\omega,\ve}(\alpha)
\leq \cot(Q_{\omega}(\alpha))+\frac{\ve}{2}
\leq \cot(P_{\omega}(\alpha))+\frac{\ve}{2},
\]
which implies
\[
\cot(P_{\omega}(\alpha)) \geq \cot(\theta)+\frac{\ve}{2}.
\]
as required.

\bigskip
\noindent
{\bf Case 2:} $\lambda_{1}<2C_{1}$.
\bigskip

In this case, \eqref{Q F P relationship eqn 4} shows $\frac{\theta}{2}\leq P_{\omega}(\alpha)<\Theta$. By the mean value theorem, there is a constant $C_{2}$ such that
\begin{equation}\label{Q F P relationship eqn 2}
\begin{split}
\cot(Q_{\omega}(\alpha)) = {} & \cot(P_{\omega}(\alpha)+\arccot(\lambda_{1})) \\
\leq {} & \cot(P_{\omega}(\alpha))-C_{2}^{-1}\arccot(\lambda_{1}) \\
\leq {} & \cot(P_{\omega}(\alpha))-C_{2}^{-1}\arccot(2C_{1}).
\end{split}
\end{equation}
On the other hand, using \eqref{Q F P relationship eqn 4} again, we obtain $\frac{\theta}{2}\leq Q_{\omega}(\alpha)<\Theta$ and so
\begin{equation}\label{Q F P relationship eqn 3}
\frac{\ve}{\sin(Q_{\omega}(\alpha))
\prod_{i=1}^{n}\sqrt{1+\lambda_{i}^{2}}} \leq C_{3}\ve.
\end{equation}
Using \eqref{Q F P relationship eqn 2}, \eqref{Q F P relationship eqn 3} and assumption $F_{\omega,\ve}(\alpha)\geq\cot(\theta)+\ve$,
\[
\cot(\theta)+\ve \leq F_{\omega,\ve}(\alpha) \leq \cot(P_{\omega}(\alpha))-C_{2}^{-1}\arccot(2C_{1})+C_{3}\ve.
\]
It then follows that
\[
\cot(P_{\omega}(\alpha)) \geq \cot(\theta)+C_{2}^{-1}\arccot(2C_{1})-C_{3}\ve.
\]
Choosing $\ve_{0}$ sufficiently small, so that
\[
\cot(P_{\omega}(\alpha)) \geq \cot(\theta)+\frac{\ve}{2},
\]
for any $\ve\in(0,\ve_{0})$, as required.
\end{proof}

\subsection{Some inequalities}
\begin{lma}\label{s Im control}
There exists $C(\alpha,\omega,X)$ such that for any $\vp\in\H$ and $s\in[\frac{1}{2},1]$,
\[
\Im(\alpha_{s\vp}+\sqrt{-1}\omega)^{n} \geq \frac{1}{C}\Im(\alpha_{\vp}+\sqrt{-1}\omega)^{n}.
\]
\end{lma}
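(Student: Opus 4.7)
The plan is to compare the two volume forms pointwise on $X$. At $x\in X$, choose a unitary frame so that $\omega_{i\bar j}=\delta_{ij}$ and $(\alpha_\vp)_{i\bar j}=\lambda_i\delta_{ij}$, and let $\mu_1\geq\cdots\geq\mu_n$ be the eigenvalues of $\alpha_{s\vp}=s\alpha_\vp+(1-s)\alpha$ with respect to $\omega$. Setting $C_0=\sup_X|\alpha|_\omega$, Weyl's inequality gives $|\mu_i-s\lambda_i|\leq(1-s)C_0\leq C_0$. Since $\H$ is convex (it equals the nested union of the convex cones $\ov{\Gamma}_{\omega,\theta,\Theta_0}$ over $\theta<\Theta_0$) and contains both $0$ and $\vp$, the segment $s\vp$ lies in $\H$; in particular $Q_\omega(\alpha_{s\vp})\in(0,\Theta_0)\subset(0,\pi)$, so
\[
\frac{\Im(\alpha_{s\vp}+\sqrt{-1}\omega)^n}{\omega^n}=\sin(Q_\omega(\alpha_{s\vp}))\prod_{i=1}^n\sqrt{1+\mu_i^2}
\]
is a genuinely positive volume form. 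The lemma then reduces to bounding each factor below by a constant times its counterpart for $\alpha_\vp$.

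The product factor is routine: an index-wise check using $|\mu_i-s\lambda_i|\leq C_0$ and $s\geq\tfrac12$ shows $\sqrt{1+\mu_i^2}\geq C^{-1}\sqrt{1+\lambda_i^2}$ for a constant depending only on $C_0$ and $n$, after splitting into the subcases $|\lambda_i|\geq 4C_0$ (where $|\mu_i|\geq|\lambda_i|/4$) and $|\lambda_i|<4C_0$ (where $\sqrt{1+\lambda_i^2}$ is bounded and $\sqrt{1+\mu_i^2}\geq 1$). Multiplying over $i$ yields the product estimate.

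The main obstacle is bounding $\sin(Q_\omega(\alpha_{s\vp}))$ from below by a multiple of $\sin(Q_\omega(\alpha_\vp))$, because $\sin(Q_\omega(\alpha_\vp))$ can itself be arbitrarily small when $Q_\omega(\alpha_\vp)\to 0$ (the hypercritical assumption precludes $Q$ near $\pi$). I plan to split along a threshold $c_0=c_0(\theta_0,\alpha,\omega,X)$ chosen so that $\cot c_0\geq\max(4C_0,1)$ and $8c_0<\pi/2$. In the regime $Q_\omega(\alpha_\vp)\geq c_0$, some $\arccot\lambda_i\geq c_0/n$ forces $\lambda_i$, and hence $|\mu_i|$ via Weyl, to be bounded above, which pins $Q_\omega(\alpha_{s\vp})$ inside a fixed compact sub-interval of $(0,\Theta_0)$; thus $\sin(Q_\omega(\alpha_{s\vp}))\geq c_1>0$, while $\sin(Q_\omega(\alpha_\vp))\leq 1$. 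In the regime $Q_\omega(\alpha_\vp)<c_0$, every $\lambda_i>\cot c_0$ is large, so $\mu_i\in[\lambda_i/4,2\lambda_i]$; the elementary bounds $1/(2x)\leq\arccot x\leq 1/x$ for $x\geq 1$ then give $\tfrac14\arccot\lambda_i\leq\arccot\mu_i\leq 8\arccot\lambda_i$, and summing places $Q_\omega(\alpha_{s\vp})$ in $[Q_\omega(\alpha_\vp)/4,\,8Q_\omega(\alpha_\vp)]\subset(0,\pi/2)$, on which $\sin$ is comparable to its argument, yielding $\sin(Q_\omega(\alpha_{s\vp}))\geq(2\pi)^{-1}\sin(Q_\omega(\alpha_\vp))$. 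Combining with the product estimate finishes the proof in either regime.
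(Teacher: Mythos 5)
Your proposal is correct and follows essentially the same route as the paper: Weyl's inequality on the eigenvalues after diagonalizing, then a two-case split according to whether the eigenvalues are all large (equivalently, $Q_\omega(\alpha_\vp)$ small) or at least one is bounded, handling the $\sin(Q)$ ratio separately from the $\prod\sqrt{1+\lambda_i^2}$ factor. The only cosmetic remark is that in justifying $s\vp\in\H$ you should write the open cones $\Gamma_{\omega,\theta,\Theta_0}$ rather than the closures $\ov{\Gamma}_{\omega,\theta,\Theta_0}$ in the nested-union description of $\H$ (the closures contain the boundary $Q=\Theta_0$, which lies outside $\H$); with that adjustment the convexity argument is fine, and in fact the paper takes $s\vp\in\H$ for granted without comment.
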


\begin{proof}
Let $\lambda_{1}\geq\ldots\geq\lambda_{n}$ and $\mu_{1}\geq\ldots\geq\mu_{n}$ be the eigenvalues of $\alpha_{\vp}$ and $\alpha_{s\vp}$ with respect to $\omega$. Let $A_{0}$ be the constant such that $-A_{0}\omega\leq \alpha\leq A_{0}\omega$. Using $\alpha_{s\vp}=(1-s)\alpha+s\alpha_{\vp}$ and Weyl's inequality, for each $i$,
\begin{equation}\label{s Im control eqn 1}
|\mu_{i}-s\lambda_{i}| \leq A_{0}.
\end{equation}
Since $s\in[\frac{1}{2},1]$, the above shows
\[
\frac{\Im(\alpha_{s\vp}+\sqrt{-1}\omega)^{n}}{\Im(\alpha_{\vp}+\sqrt{-1}\omega)^{n}}
= \frac{\sin(Q_{\omega}(\alpha_{s\vp}))}{\sin(Q_{\omega}(\alpha_{\vp}))}
\cdot\prod_{i=1}^{n}\sqrt{\frac{1+\mu_{i}^{2}}{1+\lambda_{i}^{2}}}
\geq \frac{1}{C}\cdot\frac{\sin(Q_{\omega}(\alpha_{s\vp}))}{\sin(Q_{\omega}(\alpha_{\vp}))}.
\]
We split the proof into two cases.

\bigskip
\noindent
{\bf Case 1:} $\lambda_{n}<100A_{0}$.
\bigskip

In this case, \eqref{s Im control eqn 1} shows $\mu_{n}<100A_{0}$ and
\[
\Theta_{0} > Q_{\omega}(\alpha_{s\vp}) > \arccot(\mu_{n}) \geq \arccot(100A_{0}) > 0.
\]
which implies
\[
\frac{\Im(\alpha_{s\vp}+\sqrt{-1}\omega)^{n}}{\Im(\alpha_{\vp}+\sqrt{-1}\omega)^{n}}
\geq \frac{1}{C}\cdot\frac{\sin(Q_{\omega}(\alpha_{s\vp}))}{\sin(Q_{\omega}(\alpha_{\vp}))}
\geq \frac{1}{C}.
\]

\bigskip
\noindent
{\bf Case 2:} $\lambda_{n}\geq100A_{0}$.
\bigskip

If $\lambda_{n}\geq100A_{0}$, then \eqref{s Im control eqn 1} shows that for each $i$,
\[
\frac{1}{4}\lambda_{i} \leq \mu_{i} \leq 2\lambda_{i}.
\]
Increasing $A_{0}$ if necessary, we have
\[
Q_{\omega}(\omega_{s\vp})
\leq \sum_{i=1}^{n}\arccot\left(\frac{\lambda_{i}}{4}\right)
\leq n\,\arccot\left(25A_{0}\right) \leq \frac{\pi}{2}
\]
and
\[
Q_{\omega}(\omega_{s\vp}) \geq \sum_{i=1}^{n}\arccot(2\lambda_{i})
\geq \frac{1}{C}\sum_{i=1}^{n}\arccot(\lambda_{i})
\geq \frac{1}{C}\cdot Q_{\omega}(\alpha_{\vp}) > 0.
\]
Then
\[
\sin(Q_{\omega}(\alpha_{s\vp}))
\geq \sin\left(\frac{1}{C}\cdot Q_{\omega}(\alpha_{\vp})\right)
\geq \frac{1}{C}\sin(Q_{\omega}(\alpha_{\vp})),
\]
which implies
\[
\frac{\Im(\alpha_{s\vp}+\sqrt{-1}\omega)^{n}}{\Im(\alpha_{\vp}+\sqrt{-1}\omega)^{n}}
\geq \frac{1}{C}\cdot\frac{\sin(Q_{\omega}(\alpha_{s\vp}))}{\sin(Q_{\omega}(\alpha_{\vp}))}
\geq \frac{1}{C}.
\]
\end{proof}

\begin{lma}\label{dp distance inequality 3}
For $c_{0}\in(0,\Theta_{0})$, there exists $C(c_{0},\alpha,\omega,X)$ such that
for any $\vp\in\H$ with $\sup_{X}\vp=0$ and $Q_{\omega}(\alpha_{\vp})\leq\Theta_{0}-c_{0}$,
\[
\int_{0}^{1}\int_{X}|\vp|^{p}\Im(\alpha_{t\vp}+\sqrt{-1}\omega)^{n}dt
\leq Cd_{p}^{p}(\vp,0).
\]
\end{lma}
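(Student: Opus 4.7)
The plan is to reduce the stated $d_p^p$ inequality to a pointwise comparison between the integrand and $\chi_\vp^n$, and then apply Lemma~\ref{dp distance inequality 2}. The key observation that makes such a pointwise bound uniform is that the hypothesis $Q_\omega(\alpha_\vp)\leq \Theta_0-c_0$ prevents the smallest eigenvalue $\lambda_n$ of $\alpha_\vp$ (with respect to $\omega$) from approaching $-\tan\theta_0$: since every $\arccot(\lambda_i)>0$,
\[
\arccot(\lambda_n)\leq Q_\omega(\alpha_\vp)\leq \Theta_0-c_0,
\]
so $\lambda_n\geq \cot(\Theta_0-c_0)=-\tan(\theta_0-c_0)$, and hence the eigenvalues $\beta_i=\tan\theta_0+\lambda_i$ of $\chi_\vp$ are uniformly bounded below by $c_1:=\tan\theta_0-\tan(\theta_0-c_0)>0$, a constant depending only on $c_0$ and $\theta_0$.

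Next, letting $\mu_i(t)$ denote the eigenvalues of $\alpha_{t\vp}=(1-t)\alpha+t\alpha_\vp$ with respect to $\omega$, I would use Weyl's inequality together with $-A_0\omega\leq\alpha\leq A_0\omega$ (some $A_0=A_0(\alpha,\omega)$) to obtain $|\mu_i(t)-t\lambda_i|\leq A_0$ for all $t\in[0,1]$. A short case split on each $\lambda_i$ (handling $\lambda_i\geq \max(1,2A_0)$ separately from the bounded regime $\lambda_i\in[-\tan(\theta_0-c_0),\max(1,2A_0)]$) then yields
\[
\sqrt{1+\mu_i(t)^2}\leq C(\tan\theta_0+\lambda_i)=C\beta_i,
\]
uniformly in $t\in[0,1]$ and $i$, with $C=C(c_0,\alpha,\omega)$. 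Multiplying this bound over $i$ and using $\sin(Q_\omega(\alpha_{t\vp}))\leq 1$, I obtain the desired pointwise estimate
\[
\Im(\alpha_{t\vp}+\sqrt{-1}\omega)^n = \sin(Q_\omega(\alpha_{t\vp}))\prod_{i=1}^n\sqrt{1+\mu_i(t)^2}\,\omega^n \leq C^n\,\chi_\vp^n.
\]

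Finally, since $\sup_X\vp=0$ gives $\vp\leq 0$ and so $|\vp|^p\geq 0$, integrating the above pointwise estimate against $|\vp|^p$ over $X\times[0,1]$ and invoking Lemma~\ref{dp distance inequality 2} yields
\[
\int_0^1\int_X |\vp|^p\,\Im(\alpha_{t\vp}+\sqrt{-1}\omega)^n\,dt \leq C^n\int_X|\vp|^p\chi_\vp^n \leq C'\,d_p^p(\vp,0),
\]
which is the claim. I expect no serious obstacle beyond the routine case analysis in the second step; the main conceptual point is the observation in the first step that the quantitative hypothesis on $Q_\omega(\alpha_\vp)$ is precisely what makes $\chi_\vp$ uniformly nondegenerate in exactly those eigendirections where $\mu_i(t)$ stays bounded, which is what forces the ratio $\sqrt{1+\mu_i(t)^2}/\beta_i$ to be controlled.
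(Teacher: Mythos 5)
Your proposal is correct, and it is genuinely different from the paper's argument. The paper never establishes a pointwise bound $\Im(\alpha_{t\vp}+\sqrt{-1}\omega)^n\leq C\chi_\vp^n$; instead it first applies Lemma~\ref{s Im control} (with $s=\tfrac12$) and a change of variables to reduce the time integral over $[0,1]$ to one over $[0,\tfrac12]$, then bounds $d_p^p(\vp,0)\geq 2\int_0^{1/2}d_p^p(\vp,t\vp)\,dt$ via Lemma~\ref{dp distance inequality 1} and Lemma~\ref{H dp lemma}(iii), and finally compares $\Re\bigl(e^{-\sqrt{-1}\theta_0}(\alpha_{t\vp}+\sqrt{-1}\omega)^n\bigr)$ to $\Im(\alpha_{t\vp}+\sqrt{-1}\omega)^n$ using the concavity of $\cot(Q_\omega)$ to control $Q_\omega(\alpha_{t\vp})$ for $t\leq\tfrac12$. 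You avoid the $[0,\tfrac12]$ reduction entirely by extracting a uniform lower bound $\lambda_n\geq\cot(\Theta_0-c_0)=-\tan(\theta_0-c_0)$ directly from the hypothesis, which makes $\chi_\vp$ uniformly nondegenerate and, together with Weyl's inequality on $\alpha_{t\vp}=(1-t)\alpha+t\alpha_\vp$, gives the pointwise comparison $\Im(\alpha_{t\vp}+\sqrt{-1}\omega)^n\leq C\chi_\vp^n$ uniformly in $t\in[0,1]$; you then close with the already-proved Lemma~\ref{dp distance inequality 2}. Your route is more elementary (no use of Lemma~\ref{s Im control}, no concavity of $\cot Q_\omega$ beyond what is already packaged into Lemma~\ref{dp distance inequality 2}), and it makes the role of the hypothesis $Q_\omega(\alpha_\vp)\leq\Theta_0-c_0$ more transparent. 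The paper's route, on the other hand, keeps all the estimates in terms of the canonical volume forms $\Re(e^{-\sqrt{-1}\theta_0}\cdot)$ and $\Im(\cdot)$ rather than passing through $\chi_\vp^n$, which is more uniform with the surrounding lemmas. Both ultimately rest on the same $d_p$-distance lower bounds. Two minor points worth stating explicitly in a cleaned-up writeup: the eigenvalues $\lambda_i$ and $\mu_i(t)$ should be taken with matching ordering so that Weyl's inequality applies, and the constant $C$ in your pointwise bound depends on $c_0$, $\theta_0$, and $A_0$ (hence on $c_0,\alpha,\omega,X$), which is exactly the allowed dependence.
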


\begin{proof}
By Lemma \ref{s Im control} (choosing $s=\frac{1}{2}$), for any $t\in[0,1]$, we have
\[
\Im(\alpha_{t\vp}+\sqrt{-1}\omega)^{n}
\leq C\Im(\alpha_{\frac{t\vp}{2}}+\sqrt{-1}\omega)^{n}.
\]
It then follows that
\begin{equation}\label{dp distance inequality 3 eqn 1}
\begin{split}
& \int_{0}^{1}\int_{X}|\vp|^{p}\Im(\alpha_{t\vp}+\sqrt{-1}\omega)^{n}dt \\
= {} & \int_{0}^{\frac{1}{2}}\int_{X}|\vp|^{p}\Im(\alpha_{t\vp}+\sqrt{-1}\omega)^{n}dt
+\int_{\frac{1}{2}}^{1}\int_{X}|\vp|^{p}\Im(\alpha_{t\vp}+\sqrt{-1}\omega)^{n}dt \\
\leq {} & \int_{0}^{\frac{1}{2}}\int_{X}|\vp|^{p}\Im(\alpha_{t\vp}+\sqrt{-1}\omega)^{n}dt
+C\int_{\frac{1}{2}}^{1}\int_{X}|\vp|^{p}\Im(\alpha_{\frac{t\vp}{2}}+\sqrt{-1}\omega)^{n}dt \\
\leq {} & C\int_{0}^{\frac{1}{2}}\int_{X}|\vp|^{p}\Im(\alpha_{t\vp}+\sqrt{-1}\omega)^{n}dt.
\end{split}
\end{equation}
On the other hand, by Lemma \ref{dp distance inequality 1} and \ref{H dp lemma} (iii),
\begin{equation}\label{dp distance inequality 3 eqn 2}
\begin{split}
& d_{p}^{p}(\vp,0) = 2\int_{0}^{\frac{1}{2}}d_{p}^{p}(\vp,0)dt
\geq 2\int_{0}^{\frac{1}{2}}d_{p}^{p}(\vp,t\vp)dt \\
\geq {} & 2\int_{0}^{\frac{1}{2}}\int_{X}|\vp-t\vp|^{p}
\Re\left(e^{-\sqrt{-1}\theta_{0}}(\alpha_{t\vp}+\sqrt{-1}\omega)^{n}\right)dt \\
\geq {} & \frac{1}{2^{p-1}}\int_{0}^{\frac{1}{2}}\int_{X}|\vp|^{p}
\Re\left(e^{-\sqrt{-1}\theta_{0}}(\alpha_{t\vp}+\sqrt{-1}\omega)^{n}\right)dt.
\end{split}
\end{equation}
For any $t\in[0,\frac{1}{2}]$, we have $\alpha_{t\vp} = (1-t)\alpha+t\alpha_{\vp}$. Suppose that $Q_{\omega}(\alpha)\leq\Theta_{0}-c_{0}'$. Thanks to the concavity of $\cot(Q_{\omega})$ and $Q_{\omega}(\alpha_{\vp})\leq\Theta_{0}-c_{0}$, we obtain
\[
Q_{\omega}(\alpha_{t\vp}) \leq \Theta_{0}-c_{0}'' = \theta_{0}+\frac{\pi}{2}-c_{0}'',
\]
which implies
\[
\cos(Q_{\omega}(\alpha_{t\vp})-\theta_{0}) \geq \frac{1}{C}
\geq \frac{1}{C}\sin(Q_{\omega}(\alpha_{t\vp})).
\]
It then follows that
\[
\Re\left(e^{-\sqrt{-1}\theta_{0}}(\alpha_{t\vp}+\sqrt{-1}\omega)^{n}\right)
\geq \frac{1}{C}\Im(\alpha_{t\vp}+\sqrt{-1}\omega)^{n}.
\]
Combining this with \eqref{dp distance inequality 3 eqn 1} and \eqref{dp distance inequality 3 eqn 2},
\[
d_{p}^{p}(\vp,0)
\geq \frac{1}{C}\int_{0}^{\frac{1}{2}}\int_{X}|\vp|^{p}\Im(\alpha_{t\vp}+\sqrt{-1}\omega)^{n}dt
\geq \frac{1}{C}\int_{0}^{1}\int_{X}|\vp|^{p}\Im(\alpha_{t\vp}+\sqrt{-1}\omega)^{n}dt,
\]
as required.
\end{proof}

\section{Proof of $(1)\Rightarrow(2)\Rightarrow(3)\Rightarrow(4)$ in Theorem \ref{main theorem}} \label{Section: main result 1}
In this section, we give the proof of $(1)\Rightarrow(2)\Rightarrow(3)\Rightarrow(4)$ in Theorem \ref{main theorem}.

\begin{proof}[Proof of $(1)\Rightarrow(2)\Rightarrow(3)$ in Theorem \ref{main theorem}]
The part $(2)\Rightarrow(3)$ is trivial. For $(1)\Rightarrow(2)$, it suffices to prove the case when $c$ is small. Without loss of generality, we assume that $c<\pi-\Theta_{0}$. For any $\vp\in\H_{c}$ with $\sup_{X}\vp=0$, we have $Q_{\omega}(\alpha_{\vp})\in(c,\Theta_{0})$ and so
\[
\sin(Q_{\omega}(\alpha_{\vp})) > \sin(c).
\]
Let $\lambda_{i}$ be the eigenvalues of $\alpha_{\vp}$ with respect to $\omega$. Then
\[
\begin{split}
& \frac{\Re\left(e^{-\sqrt{-1}\theta_{0}}(\alpha_{\vp}+\sqrt{-1}\omega)^{n}\right)}{\omega^{n}}
= \cos(Q_{\omega}(\alpha_{\vp})-\theta_{0})\prod_{i=1}^{n}\sqrt{1+\lambda_{i}^{2}} \\
< {} & \frac{\sin(Q_{\omega}(\alpha_{\vp}))}{\sin(c)}\cdot\prod_{i=1}^{n}\sqrt{1+\lambda_{i}^{2}}
= \frac{1}{\sin(c)}\cdot\frac{\Im(\alpha_{\vp}+\sqrt{-1}\omega)^{n}}{\omega^{n}}.
\end{split}
\]
Combining this with Lemma \ref{dp distance inequality 1},
\[
d_{1}(\vp,0) \leq \frac{1}{\sin(c)}\int_{X}|\vp|\Im(\alpha_{\vp}+\sqrt{-1}\omega)^{n}.
\]
Using $\vp\in\PSH(X,\chi)$ from Lemma \ref{quasi-plurisubharmonic} and $\sup_{X}\vp=0$,
\[
\int_{X}|\vp|\Im(\alpha+\sqrt{-1}\omega)^{n} \leq C.
\]
Then the coerciveness of $\J$ shows
\[
\J(\vp) \geq  \delta\int_{X}|\vp|\Im(\alpha_{\vp}+\sqrt{-1}\omega)^{n}-C
\geq \delta\sin(c)d_{1}(\vp,0)-C,
\]
which implies $\J$ is proper.
\end{proof}

We will prove $(3)\Rightarrow(4)$ in the rest of this section. For the dHYM equation \eqref{intro-dHYM}, by \cite[Theorem 1.2]{CollinsJacobYau2020}, the existence of subsolution implies the existence of solution. Therefore, it suffices to construct a subsolution.
\begin{thm}\label{subsolution}
If the $\J$-functional is weakly proper, then there exists $\ti{\vp}\in C^{\infty}(X)$ such that
$\alpha_{\ti{\vp}}\in\Gamma_{\omega,\theta_{0},\Theta_{0}}$ on $X$ for some $\Theta_0\in (\theta_0,\pi)$.
\end{thm}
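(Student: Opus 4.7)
The overall strategy is to produce the subsolution $\ti\vp$ by solving the twisted dHYM equation
\[
F_{\omega,\ve}(\a_{\hat\vp_\ve}) = \cot\theta_0 + a_0\ve
\]
for some fixed sufficiently small $\ve>0$, and then setting $\ti\vp := \hat\vp_\ve$. Once such a solution is available, Lemma~\ref{Q F P relationship}, applied with $\theta=\theta_0$ (and a minor rescaling of $\ve$ to absorb the constant factor $a_0$), yields $P_\omega(\a_{\hat\vp_\ve})\leq\theta_0-c_0\ve<\theta_0$, while $\hat\vp_\ve\in\H$ automatically gives $Q_\omega(\a_{\hat\vp_\ve})<\theta_0+\frac{\pi}{2}$; taking $\Theta_0=\theta_0+\frac{\pi}{2}\in(\theta_0,\pi)$ then gives $\a_{\hat\vp_\ve}\in\Gamma_{\omega,\theta_0,\Theta_0}$ as required.

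To solve the twisted equation, I would run the twisted dHYM flow \eqref{twisted dHYM flow 2} from an arbitrary $\vp_0\in\H$. Theorem~\ref{long time existence} supplies long-time existence, so the task is to establish convergence as $t\to+\infty$ using weak properness in place of the $\mathcal{C}$-subsolution input that drove Theorem~\ref{smooth convergence}. The plan for the a priori estimates is as follows: (i) Lemma~\ref{lemma:parabolic-of-L} gives $\vp(t)\in\H_c$ for some $c>0$ independent of $t$; (ii) monotonicity of $\J_\ve$ along the flow together with the comparison $|\J_\ve-\J/\sin\theta_0|=O(\ve)$ furnishes a uniform upper bound on $\J(\vp(t)-\sup_X\vp(t))$; (iii) weak properness applied on $\H_c$ yields a uniform bound $d_1(\vp(t)-\sup_X\vp(t),\,0)\leq C$; and (iv) Lemma~\ref{dp distance inequality 2} converts this into $\int_X|\vp(t)-\sup_X\vp(t)|\,\chi_{\vp(t)}^n\leq C$, which via pluripotential theory on $\PSH(X,\chi)$ (Lemma~\ref{quasi-plurisubharmonic}) upgrades to a uniform $L^\infty$-bound on $\vp(t)-\sup_X\vp(t)$; the additive normalization is then pinned down by conservation of the imaginary Calabi--Yau functional (Lemma~\ref{CY-const}). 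Once a uniform $C^0$-bound is in hand, the concavity of $F_{\omega,\ve}$ from Lemma~\ref{F properties} together with Evans--Krylov and Schauder theory yields uniform $C^{k,\alpha}$-bounds, and convergence $\vp(t)\to\hat\vp_\ve$ follows by the same exponential-decay gradient-flow argument as in the proof of Theorem~\ref{smooth convergence}.

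The principal obstacle is the uniform second-order estimate $|\ddb\vp(t)|\leq C$. In Section~\ref{Section:dHYM flow} this estimate was established through Lemma~\ref{sub-sol-TdHYMFlow} using a dHYM solution as a parabolic $\mathcal{C}$-subsolution, which is exactly the input we are trying to produce here. I expect the replacement to consist of a maximum-principle argument on a test function of the form $\log\lambda_{\max}(\a_\vp)+\phi(\vp)$ that exploits only the uniform $C^0$-bound supplied by weak properness, the uniform phase pinching $Q_\omega(\a_\vp)\in[c,\Theta_0-c]$ from Lemma~\ref{lemma:parabolic-of-L}, the pinching $F_{\omega,\ve}^{i\bar i}\asymp (1+\lambda_i^2)^{-1}$ from \eqref{sim-control-Fi}, and the concavity of $F_{\omega,\ve}$; in effect, one wants to show that $\vp(t)$ itself plays the role of a $\mathcal{C}$-subsolution once its $L^\infty$-norm is controlled by the properness constants, so that the Phong--To scheme carries through without any a priori reference solution.
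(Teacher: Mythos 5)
Your proposal diverges fundamentally from the paper's argument, and unfortunately in two places where the paper deliberately avoids exactly the steps you invoke; these are genuine gaps.

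First, step (iv) is false as stated: a bound $\int_X|\vp(t)-\sup_X\vp(t)|\,\chi_{\vp(t)}^n\leq C$ places the normalized potentials in a bounded subset of the finite-energy class $\mathcal{E}^1(X,\chi)$, but for $n\geq 2$ this class contains unbounded $\chi$-plurisubharmonic functions, and an $\mathcal{E}^1$ bound does not upgrade to an $L^\infty$ bound. What a uniform $\mathcal{E}^1$ bound does give, via \cite[Corollary 1.8]{GuedjZeriahi2007}, is zero Lelong numbers of the $L^1$ limit $u_\infty$ --- and that, not an $L^\infty$ bound, is precisely what the paper exploits. The paper never attempts to prove the flow is bounded in $C^0$ or converges smoothly; it only extracts a sequence $t_k$ along which the flow is ``asymptotically critical'' for $\J_\ve$, passes to an $L^1$-limit $u_\infty\in\mathcal{E}^1(X,\chi)$, mollifies $u_\infty$ locally, and glues the local mollifications via a regularized maximum using the zero-Lelong-number property together with the Dini--Cartan lemma and \cite[Lemma I.5.18]{Demaillybook}. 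The concavity of $\cot(Q_\omega)$ and of $F_{\omega,\ve}$ (Lemma~\ref{F continuity}, Lemma~\ref{Q F P relationship}, Lemma~\ref{Q P local regularization}) then propagate the asymptotic phase condition from the weak limit to the mollified functions. No second-order estimate for the flow is needed at all.

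Second, your proposed resolution of the ``principal obstacle'' --- that $\vp(t)$ itself serves as a parabolic $\mathcal{C}$-subsolution once its $L^\infty$-norm is controlled --- does not make sense within the Phong--T\^o scheme. The $\mathcal{C}$-subsolution there must be a fixed, time-independent barrier with a controlled constant $K$; in Lemma~\ref{sub-sol-TdHYMFlow} the barrier is the \emph{given} dHYM solution $\hat\vp_0$, which is exactly the object you are trying to produce. Letting the unknown $\vp(t)$ play the role of its own barrier makes the constants in Lemma~\ref{sub-sol-TdHYMFlow} depend on the solution, defeating the a priori estimate. This circularity is precisely why the paper abandons smooth convergence of the twisted flow for the direction $(3)\Rightarrow(4)$, and instead constructs the smooth subsolution $\ti\vp$ by the regularization--gluing procedure described above, reserving the $\mathcal{C}$-subsolution machinery of Section~\ref{Section:dHYM flow} for the converse direction $(4)\Rightarrow(1)$ where a dHYM solution is already available. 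Your first paragraph --- that a smooth solution of the twisted equation, if one had it, would yield the conclusion via Lemma~\ref{Q F P relationship} --- is correct modulo the $a_0$ rescaling you note, but that is precisely the intermediate object the paper shows one cannot readily obtain under only the weak properness hypothesis.
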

We consider the twisted dHYM flow starting from zero function:
\begin{equation}\label{twisted dHYM flow 3}
\begin{cases}
\ \partial_t \varphi = F_{\omega,\ve}(\alpha_{\vp})-\cot(\theta_{0})-a_{0}\ve, \\[1mm]
\ \vp(0) = 0,
\end{cases}
\end{equation}
where the constant $\ve$ will be determined later.

\subsection{Lower bound of $\J_{\ve}$}
\begin{lma}\label{J ve functional lower bound}
There exist $\ve_{0}(\alpha,\omega,X)$ and $C(\alpha,\omega,X)$ such that the following holds. For any $\ve\in(0,\ve_{0})$, along the twisted dHYM flow \eqref{twisted dHYM flow 3}, we have
\[
\J_{\ve}(\vp) \geq \frac{1}{C}\int_{X}(-\vp)(\chi_{\vp}^{n}-\chi^{n})-C.
\]
In particular, $\J_{\ve}(\vp)\geq-C$.
\end{lma}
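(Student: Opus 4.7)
The main idea is to use the conservation law from Lemma~\ref{CY-const} along the flow starting at $\vp_0 = 0$ to cancel the $\Im(\alpha_{s\vp}+\sqrt{-1}\omega)^n$ contributions appearing in $\J_{\ve}$, reducing the problem to controlling $\int_0^1\int_X(-\vp)\Re(\alpha_{s\vp}+\sqrt{-1}\omega)^n\,ds$ from below by the $\chi$-plurisubharmonic energy. First I would exploit the invariance of both sides under $\vp \mapsto \vp + c$ to normalize $\sup_X\vp = 0$; by Lemma~\ref{quasi-plurisubharmonic} this gives $\vp \in \PSH(X,\chi)$ with $\vp \leq 0$ and $\|\vp\|_{L^1} \leq C$. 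Computing $\J_{\ve}(\vp)$ along the straight-line path $s \mapsto s\vp$ and invoking the conservation $\Im Z(\vp(t)) = \Im Z(0) = 0$, which translates into $\int_0^1\int_X\vp\,\Im(\alpha_{s\vp}+\sqrt{-1}\omega)^n\,ds = 0$, yields
\[
\J_{\ve}(\vp) = \int_0^1\int_X(-\vp)\,\Re(\alpha_{s\vp}+\sqrt{-1}\omega)^n\,ds + \ve\int_X(-\vp)\,\omega^n,
\]
in which the second summand is already non-negative.

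The heart of the proof is the pointwise inequality
\[
\Re(\alpha_{s\vp}+\sqrt{-1}\omega)^n \geq c_3\,\chi_{s\vp}^n - \tan\theta_0\,\Im(\alpha_{s\vp}+\sqrt{-1}\omega)^n
\]
valid for all $s \in [0,1]$ with a uniform constant $c_3(\alpha,\omega,X) > 0$. I would obtain this by applying the identity $\Re z + \tan\theta_0\,\Im z = \sec\theta_0\,\Re(e^{-\sqrt{-1}\theta_0}z)$ to $z = (\alpha_{s\vp}+\sqrt{-1}\omega)^n$: in diagonal coordinates the right-hand side equals $\sec\theta_0\,\cos(Q_s - \theta_0)\prod_i\sqrt{1+\lambda_i(s)^2}\,\omega^n$, so one needs (i) a uniform positive lower bound on $\cos(Q_s - \theta_0)$ and (ii) the comparison $\prod_i\sqrt{1+\lambda_i(s)^2}\,\omega^n \geq \cos^n\theta_0\,\chi_{s\vp}^n$. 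For (ii), the elementary identity $\lambda+\tan\theta_0 = \cos(\arccot\lambda - \theta_0)\sqrt{1+\lambda^2}/\cos\theta_0$ together with $\cos(\theta_i - \theta_0) \leq 1$ suffices. For (i), I would combine the flow estimate $Q_\omega(\alpha_\vp) \leq \Theta_0 - c_0$ from Lemma~\ref{lemma:parabolic-of-L} with the concavity of $\cot Q_\omega$ on $\overline\Gamma_{\omega,\theta_0,\Theta_0}$ to conclude $Q_s \leq \Theta_0 - c_0'$; since $Q_s > 0$ automatically, $Q_s - \theta_0 \in (-\theta_0, \pi/2 - c_0')$, where $\cos$ is uniformly bounded below.

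Integrating the pointwise inequality against $(-\vp) \geq 0$ over $X \times [0,1]$ and applying the conservation law once more to annihilate the $\Im$ term produces $\int_0^1\int_X(-\vp)\Re(\alpha_{s\vp}+\sqrt{-1}\omega)^n\,ds \geq c_3\int_0^1\int_X(-\vp)\chi_{s\vp}^n\,ds$. To convert this into the energy on the right of the lemma, I would expand $\chi_{s\vp} = (1-s)\chi + s\chi_\vp$ and use the Beta function integral to obtain the clean identity $\int_0^1\chi_{s\vp}^n\,ds = \frac{1}{n+1}\sum_{k=0}^n\chi^{n-k}\wedge\chi_\vp^k$. A routine integration by parts (writing $\chi_\vp = \chi+\ddb\vp$, performing IBP, then substituting $\ddb\vp = \chi_\vp - \chi$) yields
\[
\int_X(-\vp)\chi^{n-k}\wedge\chi_\vp^k = \int_X(-\vp)\chi^n + \sum_{m=0}^{k-1}E_m,
\]
where $E_m = \int_X\sqrt{-1}\partial\vp\wedge\dbar\vp\wedge\chi_\vp^m\wedge\chi^{n-1-m} \geq 0$. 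Summing over $k$ from $0$ to $n$ produces $\int_0^1\int_X(-\vp)\chi_{s\vp}^n\,ds \geq \frac{1}{n+1}\int_X(-\vp)(\chi_\vp^n - \chi^n)$, which combined with the previous estimate finishes the proof.

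The main obstacle is the uniform positivity of $\cos(Q_s - \theta_0)$ along the linear interpolation $\alpha_{s\vp} = (1-s)\alpha + s\alpha_\vp$. The subtlety is that $Q_\omega$ itself is not concave, so one must work with the concave quantity $\cot Q_\omega$ and carefully translate the cotangent bound back into an angle bound, tracking the constants in terms of both the flow constant $c_0$ and the initial form $\alpha$.
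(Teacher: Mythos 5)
There is a genuine gap, and it sits at the very first step. You normalize $\sup_{X}\vp=0$ and then invoke the conservation $\Im Z(\vp(t))=\Im Z(0)=0$ to kill the $\Im$-term in the integral representation of $\J_{\ve}$. But $\Im Z$ is \emph{not} invariant under constant shifts: if $u=\vp-c$ with $c=\sup_{X}\vp$, then
\[
\Im Z(u)=\Im Z(\vp)+c\int_{X}\Im(\alpha+\sqrt{-1}\omega)^{n}=c\sin(\theta_{0})\,r,\qquad r>0,
\]
which is nonzero whenever $c\neq0$, and along the flow $c$ has no a priori bound (only $\dot\vp$ is bounded, so $c$ may grow linearly in $t$). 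Thus the identity $\int_{0}^{1}\int_{X}u\,\Im(\alpha_{su}+\sqrt{-1}\omega)^{n}\,ds=0$ that your whole argument rests on simply fails for the normalized potential. Conversely, if you keep the true flow solution $\vp$ so that the conservation law holds, then $\vp$ is not sign-definite and the pointwise inequality can no longer be multiplied by $(-\vp)$ with a fixed direction, so the rest of the argument also breaks. And without cancelling it, the term $-(\cot\theta_{0}+a_{0}\ve)\int_{0}^{1}\int_{X}(-u)\Im(\alpha_{su}+\sqrt{-1}\omega)^{n}\,ds$ is genuinely negative and of the wrong sign for a lower bound.

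The deeper reason this cannot be patched is that your argument nowhere uses the standing hypothesis of Section~\ref{Section: main result 1}, namely the weak properness of $\J$. Lemma~\ref{J ve functional lower bound} sits inside the proof of $(3)\Rightarrow(4)$ and must invoke (3): if the lower bound $\J_{\ve}(\vp)\geq-C$ held unconditionally along the flow from $0$, the rest of Section~\ref{Section: main result 1} would manufacture a subsolution and (via \cite{CollinsJacobYau2020}) a solution of the dHYM equation for every hypercritical class, which is false. The paper's actual proof makes the dependence explicit: it writes $\J_{\ve}(u)=\sin(\theta_{0})^{-1}\J(u)-a_{0}\ve\int_{0}^{1}\int_{X}|u|\Im(\alpha_{tu}+\sqrt{-1}\omega)^{n}dt+\ve\int_{X}|u|\omega^{n}$, bounds $\J(u)\geq\delta_{c_{0}}d_{1}(u,0)-A_{c_{0}}$ by weak properness, and then uses Lemmas~\ref{dp distance inequality 2} and~\ref{dp distance inequality 3} to control both $\int_{X}|u|\chi_{u}^{n}$ and the unwanted $\Im$-integral by $d_{1}(u,0)$, choosing $\ve$ small so that the $\Im$-contribution is dominated. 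Your auxiliary steps — the identity $\Re z+\tan\theta_{0}\Im z=\sec\theta_{0}\Re(e^{-\sqrt{-1}\theta_{0}}z)$, the bound $Q_{\omega}(\alpha_{s\vp})\leq\Theta_{0}-c_{0}'$ from concavity of $\cot Q_{\omega}$, the Beta-integral identity for $\int_{0}^{1}\chi_{s\vp}^{n}ds$, and the integration-by-parts comparison with $\chi_{\vp}^{n}-\chi^{n}$ — are all correct and would be useful building blocks, but the architecture must route through the properness hypothesis, not the conservation law.
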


\begin{proof}
Write $u=\vp-\sup_{X}\vp$, then the required inequality is equivalent to
\[
\J_{\ve}(u) \geq \frac{1}{C}\int_{X}(-u)(\chi_{u}^{n}-\chi^{n})-C.
\]
Thanks to Lemma \ref{lemma:parabolic-of-L}, we have $u\in\PSH(X,\chi)$ and so $\int_{X}|u|\chi^{n}\leq C$. It then suffices to prove
\[
\J_{\ve}(u) \geq \frac{1}{C}\int_{X}|u|\chi_{u}^{n}-C.
\]
By the definition of $\J_{\ve}$ and $\J=\sin(\theta_{0})\J_{0}$ (see Lemma \ref{J J 0 relationship}), we have
\begin{equation}\label{J ve functional lower bound eqn 1}
\begin{split}
\J_{\ve}(u) = {} & \J_{0}(u)
+\ve\int_{0}^{1}\int_{X}u\left(a_{0}\Im(\alpha_{tu}+\sqrt{-1}\omega)^{n}-\omega^{n}\right)dt \\
= {} & \frac{\J(u)}{\sin(\theta_{0})}
-a_{0}\ve\int_{0}^{1}\int_{X}|u|\Im(\alpha_{tu}+\sqrt{-1}\omega)^{n}dt
+\ve\int_{X}|u|\omega^{n}.
\end{split}
\end{equation}
Lemma~\ref{lemma:parabolic-of-L} implies $Q_{\omega}(\alpha_{u})>c_{0}$. Then by the weakly properness of $\J$,
\[
\J(u) \geq \delta_{c_{0}}d_{1}(u,0)-A_{c_{0}}.
\]
Thanks to Lemma \ref{dp distance inequality 2} and \ref{dp distance inequality 3},
\[
\begin{split}
\J(u) \geq {} & \frac{\delta_{c_{0}}}{2}d_{1}(u,0)
+\frac{\delta_{c_{0}}}{2}d_{1}(u,0)-A_{c_{0}} \\
\geq {} & \frac{\delta_{c_{0}}}{2C}\int_{X}|u|\chi_{u}^{n}
+\frac{\delta_{c_{0}}}{2C}\int_{0}^{1}\int_{X}|u|\Im(\alpha_{tu}+\sqrt{-1}\omega)^{n}dt-A_{c_{0}}
\end{split}
\]
Substituting this into \eqref{J ve functional lower bound eqn 1},
\[
\begin{split}
\J_{\ve}(u) \geq {} &
\left(\frac{\delta_{c_{0}}}{2C\sin(\theta_{0})}-a_{0}\ve\right)
\int_{0}^{1}\int_{X}|u|\Im(\alpha_{tu}+\sqrt{-1}\omega)^{n}dt \\
& +\frac{\delta_{c_{0}}}{2C\sin(\theta_{0})}\int_{X}|u|\chi_{u}^{n}
-\frac{A_{c_{0}}}{\sin(\theta_{0})}.
\end{split}
\]
This completes the proof by choosing $\ve_{0}=\frac{\delta_{c_{0}}}{4a_{0}C\sin(\theta_{0})}$.
\end{proof}

\subsection{Limiting function $u_{\infty}$}
Thanks to Lemma \ref{J ve functional lower bound}, $\J_{\ve}(\vp)$ is bounded from below. Combining this with $\de_{t}\J_{\ve}(\vp)<0$, there exists a sequence $\{t_{k}\}_{k=1}^{\infty}$ such that $\lim_{k\rightarrow\infty}t_{k}=\infty$ and
\[
\lim_{k\rightarrow\infty}\frac{\de\J_{\ve}(\vp)}{\de t}\bigg|_{t=t_{k}} = 0.
\]
Write $\vp_{t_{k}}=\vp(\cdot,t_{k})$, then the above implies
\[
\lim_{k\rightarrow\infty}\int_{X}\left(F_{\omega,\ve}(\alpha_{\vp_{t_{k}}})-\cot(\theta_{0})-a_{0}\ve\right)^{2}
\Im(\alpha_{\vp_{t_{k}}}+\sqrt{-1}\omega)^{n} = 0.
\]
By \eqref{H c positivity} and Lemma~\ref{lemma:parabolic-of-L}, we obtain
\[
\Im(\alpha_{\vp_{t_{k}}}+\sqrt{-1}\omega)^{n} \geq \sin(c_{0})\omega^{n}.
\]
It then follows that
\[
\lim_{k\rightarrow\infty}\int_{X}\left(F_{\omega,\ve}(\alpha_{\vp_{t_{k}}})-\cot(\theta_{0})-a_{0}\ve\right)^{2}
\omega^{n} = 0.
\]
We normalize
\[
u_{k} = \vp_{t_{k}}-\sup_{X}\vp_{t_{k}},
\]
so that
\begin{equation}\label{L 2 convergence}
\lim_{k\rightarrow\infty}\int_{X}\left(F_{\omega,\ve}(\alpha_{u_{k}})-\cot(\theta_{0})-a_{0}\ve\right)^{2}
\omega^{n} = 0.
\end{equation}

By Lemma \ref{lemma:parabolic-of-L}, we have $\vp_{t_{k}}\in\PSH(X,\chi)$ and $u_{k}\in\PSH(X,\chi)$. Passing to a subsequence, we may assume that $u_{k}$ converges to $u_{\infty}$ in the $L^{1}$ sense, where $u_{\infty}\in\PSH(X,\chi)$. By Lemma \ref{J ve functional lower bound},
\[
\int_{X}(-u_{k})(\chi_{u_{k}}^{n}-\chi^{n})
= \int_{X}(-\vp_{t_{k}})(\chi_{\vp_{t_{k}}}^{n}-\chi^{n})
\leq C\J_{\ve}(\vp_{t_{k}})+C
\leq C\J_{\ve}(0)+C,
\]
where we have used that monotonicity of $\J_{\ve}$ along the twisted dHYM flow \eqref{twisted dHYM flow 3}.
Combining this with \cite[Proposition 2.2, Corollary 2.7]{GuedjZeriahi2007}, we obtain
\[
u_{\infty}\in\mathcal{E}^{1}(X,\chi)\subset\mathcal{E}(X,\chi).
\]
By \cite[Corollary 1.8]{GuedjZeriahi2007}, the function $u_{\infty}$ has zero Lelong number everywhere.

\subsection{Local regularization}

For $\sigma>0$, we choose a finite open cover $\{B_{i,4R}=B_{4R}(p_{i})\}$ of $X$, where $p_{i}\in X$ and $B_{4R}(p_{i})$ denotes the Euclidean ball with center $p_{i}$ and radius $4R$. We choose $R$ sufficiently small so that $\{B_{R}(p_i)\}$ is also a covering of $X$. On each $B_{i,4R}$, there exists a constant K\"ahler form $\omega_{i}$ such that
\[
(1-\sigma^{5})\omega_{i} \leq \omega \leq (1+\sigma^{5})\omega_{i}.
\]
We further assume that
\[
\alpha = \ddb\vp_{\alpha,i}, \ \
\omega = \ddb\vp_{\omega,i}, \ \
\chi = \ddb\vp_{\chi,i},
\]
where three potential functions $\vp_{\alpha,i}$, $\vp_{\omega,i}$ and $\vp_{\chi,i}$ satisfy
\[
(1-\sigma)|z|^{2} \leq \vp_{\omega,i} \leq (1+\sigma)|z|^{2}
\]
and
\[
|\vp_{\alpha,i}|+|\vp_{\omega,i}|+|\vp_{\chi,i}|
+|\nabla\vp_{\alpha,i}|+|\nabla\vp_{\omega,i}|+|\nabla\vp_{\chi,i}| \leq K
\]
for some constant $K>0$.

For any $k$, define
\[
u_{k,i} = u_{k}+\vp_{\alpha,i}, \ \
u_{\infty,i} = u_{\infty}+\vp_{\alpha,i}.
\]
It is clear that
\[
\ddb u_{k,i} = \ddb u_{k}+\alpha = \alpha_{u_{k}}.
\]

\begin{defn}
For any real $(1,1)$-form $\beta$ on $B_{i,R}$, the local regularization of $\beta$ is defined by
\[
\beta^{(r)}(x)=\int_{B_1(0)} r^{-2n}\rho\left(\frac{|y|}{r} \right)\beta(x-y) \; d\mathrm{vol}_{g_{\mathrm{E}}}(y)
\]
for $r\in (0,1)$, where $B_{1}(0)$ denotes the unit ball in $\mathbb{C}^{n}$, $g_{\mathrm{E}}$ denotes the Euclidean metric, and $\rho$ is a non-negative mollifier supporting on $[0,1]$ such that $\int_{B_{1}(0)}\rho \; d\mathrm{vol}_{g_{\mathrm{E}}}=1$.
\end{defn}

\begin{lma}\label{Q P local regularization}
There exist constants $\ve_{0}(\alpha,\omega,X)$ and $c_{0}(\alpha,\omega,X)$ such that the following holds. For $\ve\in(0,\ve_{0})$, there exist constant $\sigma_{0}(\ve,\alpha,\omega,X)$, $r_{0}(\ve,\alpha,\omega,X)$  such that for any $i$, $\ve\in(0,\ve_{0})$, $\sigma\in(0,\sigma_{0})$ and $r\in(0,r_{0})$,
\begin{enumerate}\setlength{\itemsep}{1mm}
\item[(i)] $Q_{\omega}(\ddb u_{\infty,i}^{(r)})\leq\Theta_{0}-\frac{c_{0}}{2}$;
\item[(ii)] $P_{\omega}(\ddb u_{\infty,i}^{(r)})\leq\theta_{0}-\frac{a_{0}c_{0}\ve}{2}$.
\end{enumerate}
\end{lma}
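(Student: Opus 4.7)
The plan is to transfer uniform estimates available for $\alpha_{u_k}$ to the regularizations $\alpha_{u_k}^{(r)} = \ddb u_{k,i}^{(r)}$, pass $k \to \infty$, and translate the estimate from the local constant metric $\omega_i$ back to $\omega$. Three ingredients will be used repeatedly. First, since $\omega_i$ has constant coefficients on $B_{i,4R}$, mollification commutes with $\ddb$, so $\ddb u_{k,i}^{(r)} = (\alpha_{u_k})^{(r)}$ on $B_{i,R}$ for $r<R$. Second, Lemma \ref{Q P continuity} lets us exchange $Q_\omega$ with $Q_{\omega_i}$ at the cost of an additive $\sigma$, while Lemma \ref{F continuity} lets us exchange $F_{\omega,\ve}$ with $F_{\omega_i,\ve}$ at the cost of $\ve^2$. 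Third, since $u_k \to u_\infty$ in $L^1$, the mollifications $u_{k,i}^{(r)} \to u_{\infty,i}^{(r)}$ in $C^\infty$ on $B_{i,R}$ for each fixed $r$.

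For (i), the uniform bound $Q_\omega(\alpha_{u_k}) \leq \Theta_0 - c_0$ from Lemma \ref{lemma:parabolic-of-L} transfers to $Q_{\omega_i}(\alpha_{u_k}) \leq \Theta_0 - c_0 + \sigma$ on $B_{i,4R}$. Since $P \leq Q$ pointwise, the sublevel set $\{Q_{\omega_i}(\cdot) \leq \Theta_0 - c_0 + \sigma\}$ coincides with $\ov{\Gamma}_{\omega_i,\,\Theta_0-c_0+\sigma,\,\Theta_0-c_0+\sigma}$ and is convex by the first lemma of Section \ref{Section-Preliminaries}. The integral average $\alpha_{u_k}^{(r)}$ therefore inherits the same bound, which persists in the $C^2$-limit as $k \to \infty$. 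A second application of Lemma \ref{Q P continuity} then yields $Q_\omega(\ddb u_{\infty,i}^{(r)}) \leq \Theta_0 - c_0 + 2\sigma$, and choosing $\sigma_0 \leq c_0/4$ gives (i).

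For (ii), the role of convexity is played by concavity of $F_\ve$ from Lemma \ref{F properties}. Jensen's inequality for the mollifier, together with Lemma \ref{F continuity}, gives
\[
F_{\omega_i,\ve}(\alpha_{u_k}^{(r)}) \;\geq\; \bigl(F_{\omega_i,\ve}(\alpha_{u_k})\bigr)^{(r)} \;\geq\; \bigl(F_{\omega,\ve}(\alpha_{u_k})\bigr)^{(r)} - \ve^2 .
\]
By \eqref{L 2 convergence}, $F_{\omega,\ve}(\alpha_{u_k}) \to \cot\theta_0 + a_0\ve$ in $L^2$, so its mollification at fixed scale $r$ converges uniformly on $B_{i,R}$ to the constant $\cot\theta_0 + a_0\ve$. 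Passing $k \to \infty$ and applying Lemma \ref{F continuity} in the reverse direction produces $F_{\omega,\ve}(\ddb u_{\infty,i}^{(r)}) \geq \cot\theta_0 + a_0\ve - 2\ve^2$. Combined with the bound $Q_\omega(\ddb u_{\infty,i}^{(r)}) < \Theta_0$ from part (i), Lemma \ref{Q F P relationship} (applied with the offset $a_0\ve/2 \geq \ve - 2\ve^2$ replacing ``$\ve$'' in the hypothesis, which requires only the analogous case analysis in its proof) then yields $P_\omega(\ddb u_{\infty,i}^{(r)}) \leq \theta_0 - (a_0 c_0 / 2)\ve$ for sufficiently small $\ve$.

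The main delicacy lies in (ii): the only control on $F_{\omega,\ve}(\alpha_{u_k})$ is in $L^2$, and converting this into a pointwise lower bound for $F_{\omega,\ve}(\ddb u_{\infty,i}^{(r)})$ hinges essentially on the concavity-plus-Jensen step. One must then calibrate the parameters in the hierarchy $\ve_0 > \sigma_0(\ve) > r_0(\ve,\sigma)$ so that every error term --- the $\sigma$ from metric comparison, the $\ve^2$ from continuity of $F$, and the $L^2 \to L^\infty$ smoothing error at scale $r$ --- remains strictly below the $a_0\ve/2$ margin required to invoke the $F$-to-$P$ conversion.
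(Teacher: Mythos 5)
Your argument for part~(i) matches the paper's: an upper bound on $Q_{\omega}(\alpha_{u_k})$ transfers to $\omega_i$ via Lemma~\ref{Q P continuity}(i), is preserved under mollification by the concavity of $\cot(Q_{\omega_i})$ (equivalently, the convexity of the sublevel set), and then transfers back. That part is correct and is what the paper does.

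For part~(ii), however, there is a genuine gap. You establish $F_{\omega_i,\ve}(\ddb u_{\infty,i}^{(r)})\geq\cot\theta_0+a_0\ve-\ve^2$ via Jensen and the $L^2$ convergence, which is fine, but then you want to ``apply Lemma~\ref{F continuity} in the reverse direction'' to conclude $F_{\omega,\ve}(\ddb u_{\infty,i}^{(r)})\geq\cot\theta_0+a_0\ve-2\ve^2$, and then run Lemma~\ref{Q F P relationship} with the background metric $\omega$. The problem is that Lemma~\ref{F continuity} requires the \emph{two-sided} hypothesis $Q_{\omega_1}(\alpha)\in(c_0,\pi-c_0)$, and the constant $\ve_0$ there depends on $c_0$. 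You have the upper bound $Q_{\omega_i}(\ddb u_{\infty,i}^{(r)})<\Theta_0$ from part~(i), but a uniform positive lower bound for $Q_{\omega_i}(\ddb u_{\infty,i}^{(r)})$ is never established. Concavity of $\cot Q$ preserves only upper bounds on $Q$ under mollification, not lower bounds; and the lower bound $F_{\omega_i,\ve}\geq\cot\theta_0+\ldots$ does not prevent $Q$ from being small, since $F\to+\infty$ as $Q\to0^+$. Whatever lower bound one could squeeze out would depend on $r$, creating a circular dependency with the $\ve_0(c_0)$ of Lemma~\ref{F continuity}, because you must pick $\ve<\ve_0$ \emph{before} sending $r\to 0$.

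The paper sidesteps this entirely by never returning to $F_{\omega,\ve}$ on the mollified form. It keeps working at the $\omega_i$ level: combining $Q_{\omega_i}(\ddb u_{\infty,i}^{(r)})<\Theta_0$ with $F_{\omega_i,\ve}(\ddb u_{\infty,i}^{(r)})\geq\cot\theta_0+\tfrac{2a_0\ve}{3}$, it applies Lemma~\ref{Q F P relationship} with background metric $\omega_i$ to get $P_{\omega_i}(\ddb u_{\infty,i}^{(r)})\leq\theta_0-\tfrac{2a_0c_0\ve}{3}$, and only then converts $P_{\omega_i}$ to $P_{\omega}$ via Lemma~\ref{Q P continuity}(ii), which needs only the \emph{upper} bound $P_{\omega_i}<\pi-c_0'$ (clearly available since $P_{\omega_i}<\theta_0<\pi/2$). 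You should adopt this route: apply the $F$-to-$P$ conversion first, in the $\omega_i$ metric, and only afterward change back to $\omega$ using the one-sided comparison for $P$. (Note also that your claimed inequality $a_0\ve/2\geq\ve-2\ve^2$ is not what is needed; the correct statement is that Lemma~\ref{Q F P relationship}'s proof works with the offset $\ve$ replaced by any $\ve'$ comparable to $\ve$, such as $\tfrac{2a_0\ve}{3}$, and you then lose a harmless factor in the constant $c_0$.)
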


\begin{proof}
For (i), by definitions of $u_{k}$ and $u_{k,i}$, and Lemma~\ref{lemma:parabolic-of-L},
\begin{equation}\label{Q P local regularization eqn 1}
Q_{\omega}(\ddb u_{k,i}) = Q_{\omega}(\alpha_{u_{k}})
= Q_{\omega}(\alpha_{\vp_{t_{k}}}) \in (c_{0},\Theta_{0}-c_{0}).
\end{equation}
Using Lemma \ref{Q P continuity} (i),
\[
Q_{\omega_{i}}(\ddb u_{k,i}) \leq \Theta_{0}-c_{0}+\sigma \leq \Theta_{0}-\frac{2c_{0}}{3}.
\]
Thanks to the concavity of $\cot(Q_{\omega_{i}})$,
\begin{equation}\label{Q P local regularization eqn 2}
Q_{\omega_{i}}(\ddb u_{k,i}^{(r)}) = Q_{\omega_i}\big((\ddb u_{k,i})^{(r)}\big)
\leq \Theta_{0}-\frac{2c_{0}}{3}.
\end{equation}
Using Lemma \ref{Q P continuity} (i) again,
\[
Q_{\omega}(\ddb u_{k,i}^{(r)}) \leq \Theta_{0}-\frac{2c_{0}}{3}+\sigma
\leq \Theta_{0}-\frac{c_{0}}{2}.
\]
Letting $k\rightarrow\infty$, we obtain
\[
Q_{\omega}(\ddb u_{\infty,i}^{(r)}) < \Theta_{0}-\frac{c_{0}}{2}.
\]

\bigskip

For (ii), we first claim
\begin{equation}\label{Q P local regularization claim}
F_{\omega,\ve}(\ddb u_{\infty,i}^{(r)})
\geq \cot(\theta_{0})+\frac{2a_{0}\ve}{3}.
\end{equation}
To prove \eqref{Q P local regularization claim}, combining \eqref{Q P local regularization eqn 1} and Lemma \ref{F continuity},
\[
F_{\omega_{i},\ve}(\ddb u_{k,i}) \geq F_{\omega,\ve}(\ddb u_{k,i})-\ve^{2}
= F_{\omega,\ve}(\alpha_{u_{k}})-\ve^{2}.
\]
The concavity of $F_{\omega_{i},\ve}$ shows
\[
\begin{split}
& F_{\omega_{i},\ve}(\ddb u_{k,i}^{(r)})
= F_{\omega_{i},\ve}\big((\ddb u_{k,i})^{(r)}\big) \\[1mm]
\geq {} & \int_{B_1} r^{-2n}\rho\left(\frac{|y|}{r} \right) F_{\omega_{i},\ve}(\ddb u_{k,i})(x-y) d\mathrm{vol}_{g_{\mathrm{E}}}(y)\\
\geq {} & \int_{B_1} r^{-2n}\rho\left(\frac{|y|}{r} \right) F_{\omega,\ve}(\alpha_{u_{k}})(x-y) d\mathrm{vol}_{g_{\mathrm{E}}}(y)-\ve^{2}.
\end{split}
\]
Recall that $F_{\omega,\ve}(\alpha_{u_{k}})$ converges to $\cot(\theta_{0})+a_{0}\ve$ in $L^{2}$ (see \eqref{L 2 convergence}). Letting $k\rightarrow\infty$,
\[
F_{\omega_{i},\ve}(\ddb u_{\infty,i}^{(r)})
\geq \cot(\theta_{0})+a_{0}\ve-\ve^{2}.
\]
Decreasing $\ve_{0}$ if necessary, we obtain \eqref{Q P local regularization claim}.

By \eqref{Q P local regularization eqn 2} (letting $k\rightarrow\infty$), we obtain
\[
Q_{\omega_{i}}(\ddb u_{\infty,i}^{(r)}) < \Theta_{0}.
\]
Combining this with \eqref{Q P local regularization claim} and Lemma \ref{Q F P relationship},
\[
P_{\omega_{i}}(\ddb u_{\infty,i}^{(r)})\leq\theta_{0}-\frac{2a_{0}c_{0}\ve}{3}.
\]
Using Lemma \ref{Q P continuity} (ii), we obtain (ii).
\end{proof}

\subsection{Gluing argument}
In each $B_{i,3R}$, define the function $\ti{u}_{\infty,i}$ by
\[
\ti{u}_{\infty,i} = u_{\infty}+\vp_{\chi,i}.
\]
Since $u_{\infty}\in\PSH(X,\chi)$, then $\ti{u}_{\infty,i}$ is a plurisubharmonic function on $B_{i,3R}$. For $p\in B_{i,3R}$ and $r\in(0,R)$, we use $B_{i,r}(p)\subset B_{i,3R}$ to denote the Euclidean ball with center $p$ and radius $r$. Write
\[
M_{\ti{u}_{\infty,i}}(p,r) = \sup_{B_{i,r}(p)}\ti{u}_{\infty,i}, \ \
\nu_{\ti{u}_{\infty,i}}(p,r) = \frac{M_{\ti{u}_{\infty,i}}(p,R)-M_{\ti{u}_{\infty,i}}(p,r)}{\log R-\log r}.
\]
It is well-known that $\nu_{\ti{u}_{\infty,i}}(p,r)$ converges decreasingly to the Lelong number $\nu_{\ti{u}_{\infty,i}}(p)$ as $r\rightarrow0$, i.e.,
\[
\lim_{r\rightarrow0}\nu_{\ti{u}_{\infty,i}}(p,r) = \nu_{\ti{u}_{\infty,i}}(p).
\]
In fact, $\nu_{\ti{u}_{\infty,i}}(p)$ is independent of $i$. Precisely, for any $i$ such that $p\in B_{i,3R}$,
\[
\nu_{\ti{u}_{\infty,i}}(p) = \nu_{u_{\infty}}(p),
\]
where $\nu_{u_{\infty}}(p)$ denotes the Lelong number of $\chi$-plurisubharmonic function $u_{\infty}$ at $p$.

By adapting the idea from B{\l}ocki-Ko{\l}odziej \cite{BlockiKolodziej2007}, Chen \cite{Chen2021} proved the following lemma, which characterizes the behaviours of $M_{\ti{u}_{\infty,i}}(p,r)$ and $\ti{u}_{\infty,i}^{(r)}(p)$ with respect to $\nu_{\ti{u}_{\infty,i}}(p,r)$.

\begin{lma}[Lemma 4.2 of \cite{Chen2021}]\label{Chen lemma 4.2}
For any $p\in B_{i,3R}$ and $r\in(0,R)$, we have
\begin{enumerate}\setlength{\itemsep}{1mm}
\item[(i)] $0\leq M_{\ti{u}_{\infty,i}}(p,r)-M_{\ti{u}_{\infty,i}}(p,\frac{r}{2})\leq(\log2)\nu_{\ti{u}_{\infty,i}}(p,r)$,
\item[(ii)] $0\leq M_{\ti{u}_{\infty,i}}(p,r)-\ti{u}_{\infty,i}^{(r)}(p)\leq \tau_{0}\nu_{\ti{u}_{\infty,i}}(p,r)$, where $\tau_{0}$ is a universal constant depending only $n$.
\end{enumerate}
\end{lma}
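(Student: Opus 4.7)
The plan is to derive both inequalities from the classical convexity property of the maximum of a plurisubharmonic function on balls: for any plurisubharmonic $v$ on $B_{R}(p)\subset\mathbb{C}^{n}$, the function $\phi_{v}(s):=M_{v}(p,e^{s})$ is convex and non-decreasing in $s\in(-\infty,\log R]$. Note that $\nu_{v}(p,r)$ is precisely the slope of the secant line to $\phi_{v}$ over $[\log r,\log R]$, which also explains the monotone convergence $\nu_{v}(p,r)\downarrow\nu_{v}(p)$ asserted just above the statement.

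For (i), the lower bound $M(r)-M(r/2)\geq 0$ is simply monotonicity of $r\mapsto M(r)$. For the upper bound I would apply the three-slope inequality for the convex function $\phi_{\ti u_{\infty,i}}$ at the three points $\log(r/2)<\log r<\log R$:
\[
\frac{\phi(\log r)-\phi(\log(r/2))}{\log 2}\leq\frac{\phi(\log R)-\phi(\log r)}{\log R-\log r}=\nu_{\ti u_{\infty,i}}(p,r),
\]
and multiply by $\log 2$ to get the claim.

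For (ii), the lower bound $\ti u_{\infty,i}^{(r)}(p)\leq M_{\ti u_{\infty,i}}(p,r)$ is immediate, since $\ti u_{\infty,i}\leq M(r)$ pointwise on $B_{r}(p)$ while the mollifier integrates to $1$. For the upper bound I would pass to spherical coordinates,
\[
\ti u_{\infty,i}^{(r)}(p)=\sigma_{2n-1}\int_{0}^{1}\rho(t)\,t^{2n-1}\,\bar{\ti u}_{\partial}(p,rt)\,dt,
\]
where $\bar{\ti u}_{\partial}(p,s)$ denotes the normalized spherical average of $\ti u_{\infty,i}$ on $\partial B_{s}(p)$, and exploit that for plurisubharmonic functions the spherical mean is itself convex, non-decreasing in $\log s$, and satisfies $\bar{\ti u}_{\partial}(p,s)\leq M(s)$. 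Splitting
\[
M(r)-\bar{\ti u}_{\partial}(p,rt)=\big[M(r)-M(rt)\big]+\big[M(rt)-\bar{\ti u}_{\partial}(p,rt)\big],
\]
the first bracket is at most $(-\log t)\,\nu(p,r)$ by the three-slope inequality applied to $\phi$ at $\log(rt)<\log r<\log R$, while the second is handled by the analogous three-slope argument applied to the convex function $s\mapsto\bar{\ti u}_{\partial}(p,e^{s})$, using the outer-radius bound $\bar{\ti u}_{\partial}(p,R)\leq M(R)$ to close the estimate. Integrating against $\rho(t)t^{2n-1}\,dt$ on $[0,1]$ yields the universal constant $\tau_{0}=\tau_{0}(n)$, because the factor $t^{2n-1}$ tames the integrable logarithmic singularity at $t=0$.

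The main obstacle is the single-sphere deviation $M(rt)-\bar{\ti u}_{\partial}(p,rt)$ in part (ii). For a general plurisubharmonic function the maximum and the spherical mean at a fixed radius can differ by any finite amount, so this deviation is not controlled pointwise by $\nu(p,r)$. The key device, going back to B{\l}ocki--Ko{\l}odziej, is to observe that once this deviation is viewed across the full range of radii and paired against the bounded outer value $\bar{\ti u}_{\partial}(p,R)\leq M(R)$, convexity in $\log r$ collapses it into a single secant slope. This packaging of convexity-in-$\log$ data into a single Lelong-type invariant is the technical heart of the lemma.
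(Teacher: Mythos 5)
The paper does not prove this lemma itself; it is cited verbatim as Lemma 4.2 of \cite{Chen2021}, so there is no in-paper proof against which to compare. Reviewing your proposal on its own merits:

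Your argument for part (i) is correct: the lower bound is monotonicity of $r\mapsto M(r)$, and the upper bound is exactly the three-slope inequality for the convex function $\phi(s)=M(p,e^{s})$ applied at $\log(r/2)<\log r<\log R$, with the right-hand slope being $\nu(p,r)$.

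For part (ii), the lower bound and the spherical-coordinates reduction are fine, and the bound $M(r)-M(rt)\leq(-\log t)\,\nu(p,r)$ for the first bracket is again the three-slope inequality. But the treatment of the single-sphere deviation $M(rt)-\bar{\ti u}_{\partial}(p,rt)$ has a genuine gap. Convexity of $\phi$ and of $\psi(s)=\bar{\ti u}_{\partial}(p,e^{s})$, together with the pointwise comparison $\psi\leq\phi$ and the outer-radius relation $\psi(\log R)\leq\phi(\log R)$, do \emph{not} bound the gap $\phi-\psi$ at an interior radius: two convex nondecreasing functions with the same asymptotic slope and $\psi\leq\phi$ can differ by an arbitrarily large constant on the whole interval, so a ``three-slope argument for $\psi$ closed by $\bar{\ti u}_{\partial}(p,R)\leq M(R)$'' cannot produce the needed inequality. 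The missing ingredient is a quantitative sub-mean-value (Harnack-type) comparison that genuinely uses plurisubharmonicity, not just the convexity of the two radial profiles. Concretely, for a psh (hence subharmonic) function $u\leq 0$ on $B_{s}(p)$, the sub-mean-value inequality on the sub-balls $B_{s/2}(q)\subset B_{s}(p)$ for $q\in B_{s/2}(p)$ gives
\[
M_{u}(p,s/2)\;\leq\;2^{2n}\,\bar u_{B}(p,s)\;\leq\;2^{2n}\,\bar u_{\partial}(p,s);
\]
normalizing $u$ by subtracting $M_{u}(p,s)$ turns this into
\[
M_{u}(p,s)-\bar u_{\partial}(p,s)\;\leq\;C(n)\bigl(M_{u}(p,s)-M_{u}(p,s/2)\bigr),
\]
and combining with part (i) and the monotonicity $\nu(p,s)\leq\nu(p,r)$ for $s\leq r$ yields the pointwise control $M(rt)-\bar{\ti u}_{\partial}(p,rt)\leq C(n)(\log2)\,\nu(p,r)$. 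This is what actually ``closes'' the estimate; once you have it, your integration against $\rho(t)t^{2n-1}\,dt$ does produce a finite $\tau_{0}(n)$ as you indicate. Your own caveat (that the single-sphere deviation is the technical heart) is correct, but the mechanism you propose to overcome it---convexity alone---is insufficient.
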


By Lemma \ref{Q P local regularization}, we have
\[
Q_{\omega}(\ddb u_{\infty,i}^{(r)}-\ve^{2}\omega)
\leq Q_{\omega}(\ddb u_{\infty,i}^{(r)})+n\ve^{2}
\leq \Theta_{0}-\frac{a_{0}\ve}{2}+n\ve^{2}
\]
and
\[
P_{\omega}(\ddb u_{\infty,i}^{(r)}-\ve^{2}\omega)
\leq P_{\omega}(\ddb u_{\infty,i}^{(r)})+(n-1)\ve^{2}
\leq \theta_{0}-\frac{a_{0}c_{0}\ve}{2}+(n-1)\ve^{2}.
\]
Choosing $\ve$ sufficiently small (this fixes the value of $\ve$), we obtain
\[
\ddb u_{\infty,i}^{(r)}-\ve^{2}\omega \in \Gamma_{\omega,\theta_{0},\Theta_{0}}.
\]
Define
\[
\ti{\vp}_{i,r} = u_{\infty,i}^{(r)}-\vp_{\alpha,i}-\ve^{2}\vp_{\omega,i},
\]
so that
\begin{equation}\label{gluing eqn 1}
\alpha_{\ti{\vp}_{i,r}} \in \Gamma_{\omega,\theta_{0},\Theta_{0}}.
\end{equation}
Recalling the definitions of $u_{\infty,i}$ and $\ti{u}_{\infty,i}$, we have
\[
u_{\infty,i}^{(r)} = (u_{\infty}+\vp_{\alpha,i})^{(r)}
= (\ti{u}_{\infty,i}-\vp_{\chi,i}+\vp_{\alpha,i})^{(r)},
\]
which implies
\[
\ti{\vp}_{i,r}
= \ti{u}_{\infty,i}^{(r)}-\vp_{\chi,i}^{(r)}
+\vp_{\alpha,i}^{(r)}-\vp_{\alpha,i}-\ve^{2}\vp_{\omega,i}.
\]

\begin{lma}\label{gluing lemma}
There exists $r_{0}(\ve,\alpha,\omega,X)$ such that for any $r\in(0,r_{0})$ and any $p\in X$, we have
\[
\max_{\{j|p\in B_{j,3R}\setminus B_{j,2R}\}}\ti{\vp}_{j,r}(p)
< \max_{\{i|p\in B_{i,R}\}}\ti{\vp}_{i,r}(p)-\frac{\ve^{2}R^{2}}{2}.
\]
\end{lma}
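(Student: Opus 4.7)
The plan is to leverage the explicit representation
\[
\ti{\vp}_{i,r}
= u_{\infty,i}^{(r)}-\vp_{\alpha,i}-\ve^{2}\vp_{\omega,i}
= u_{\infty}^{(r)}+\vp_{\alpha,i}^{(r)}-\vp_{\alpha,i}-\ve^{2}\vp_{\omega,i},
\]
where the second equality uses $u_{\infty,i}=u_{\infty}+\vp_{\alpha,i}$ together with the linearity of mollification. The essential observation is that the singular piece $u_{\infty}^{(r)}(p)$ does not depend on the chart index, and so it will cancel in any difference. Hence whenever both $\ti{\vp}_{i,r}(p)$ and $\ti{\vp}_{j,r}(p)$ are defined,
\[
\ti{\vp}_{i,r}(p)-\ti{\vp}_{j,r}(p)
=\ve^{2}\bigl(\vp_{\omega,j}(p)-\vp_{\omega,i}(p)\bigr)+\mathrm{R}_{i}(p,r)-\mathrm{R}_{j}(p,r),
\]
where $\mathrm{R}_{k}(p,r):=\vp_{\alpha,k}^{(r)}(p)-\vp_{\alpha,k}(p)$. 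Thus the entire inequality reduces to two elementary estimates on the smooth potentials.

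First I would extract the main positive term. The quadratic pinching $(1-\sigma)|z|^{2}\leq\vp_{\omega,k}\leq(1+\sigma)|z|^{2}$ in the $B_{k,4R}$-coordinate centered at $p_{k}$ implies, whenever $p\in B_{i,R}$ and $p\in B_{j,3R}\setminus B_{j,2R}$, that
\[
\vp_{\omega,i}(p)\leq(1+\sigma)R^{2},\qquad \vp_{\omega,j}(p)\geq(1-\sigma)(2R)^{2}=4(1-\sigma)R^{2}.
\]
After shrinking $\sigma_{0}$ if necessary (say $\sigma<1/5$), this yields
\[
\vp_{\omega,j}(p)-\vp_{\omega,i}(p)\geq(3-5\sigma)R^{2}\geq 2R^{2}.
\]

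Next I would absorb the mollification remainders $\mathrm{R}_{k}$. Each $\vp_{\alpha,k}$ is smooth on $B_{k,4R}$ with $C^{2}$-norm controlled by a constant depending only on $\a,\omega,X$ (the cover is finite and fixed). Since the mollifier $\rho$ is radial, a second-order Taylor expansion at $p$ makes the linear term vanish and yields $|\mathrm{R}_{k}(p,r)|\leq Cr^{2}$ for a uniform $C=C(\a,\omega,X)$. Choosing $r_{0}=r_{0}(\ve,\a,\omega,X)$ so small that $2Cr_{0}^{2}<\tfrac{3}{2}\ve^{2}R^{2}$, we obtain, for every $r\in(0,r_{0})$ and every admissible pair $(i,j)$,
\[
\ti{\vp}_{i,r}(p)-\ti{\vp}_{j,r}(p)\geq 2\ve^{2}R^{2}-2Cr^{2}>\tfrac{\ve^{2}R^{2}}{2}.
\]
Taking the maximum over $i\in\{i:p\in B_{i,R}\}$ (nonempty because $\{B_{i,R}\}$ covers $X$) and over $j\in\{j:p\in B_{j,3R}\setminus B_{j,2R}\}$ (the claim being vacuous if the latter set is empty) delivers the desired inequality.

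No serious obstacle arises here; the whole argument is two routine bounds stitched together by the chart-index cancellation identity above. The only point requiring care is that the $C^{2}$-bound on $\vp_{\alpha,k}$ and the pinching constants for $\vp_{\omega,k}$ are uniform in $k$, which is automatic because the cover is finite and fixed once and for all.
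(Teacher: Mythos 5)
Your proof contains a fatal gap. The claim that ``the singular piece $u_{\infty}^{(r)}(p)$ does not depend on the chart index'' is false: the local regularization $(\cdot)^{(r)}$ is defined by convolution against the Euclidean structure of a \emph{specific} coordinate ball $B_{i,4R}$, so what you write as $u_{\infty}^{(r)}$ is really $(u_{\infty})^{(r),i}$, and this genuinely depends on $i$. Two overlapping coordinate charts on a compact K\"ahler manifold are related by a nonlinear biholomorphism, so mollifying $u_{\infty}$ against the flat metric of chart $i$ at $p$ and against the flat metric of chart $j$ at the same $p$ produce different values. Consequently the cancellation identity
\[
\ti{\vp}_{i,r}(p)-\ti{\vp}_{j,r}(p)
=\ve^{2}\bigl(\vp_{\omega,j}(p)-\vp_{\omega,i}(p)\bigr)+\mathrm{R}_{i}(p,r)-\mathrm{R}_{j}(p,r)
\]
does not hold: it is missing the term $(u_{\infty})^{(r),i}(p)-(u_{\infty})^{(r),j}(p)$, and there is no a priori bound on this term because $u_{\infty}$ is merely $\chi$-plurisubharmonic, not smooth.

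Controlling precisely this chart-dependent term is the entire point of the lemma, and is why the paper invokes the Lelong-number machinery. The actual argument bounds each chart's mollified value by the supremum of $u_{\infty}$ over a Euclidean ball of that chart, up to an error proportional to the local Lelong number (Lemma~\ref{Chen lemma 4.2}): one gets
\[
\ti{\vp}_{j,r}(p)\lesssim \sup_{B_{j,r/2}(p)}u_{\infty}+\cdots,
\qquad
\ti{\vp}_{i,r}(p)\gtrsim \sup_{B_{i,r}(p)}u_{\infty}-\cdots,
\]
and then the inclusion $B_{j,r/2}(p)\subset B_{i,r}(p)$ (valid for small $r$ once the local metrics are pinched) makes the difference of suprema nonnegative, while the vanishing of the Lelong number of $u_{\infty}$ (plus Dini--Cartan) makes the error terms $\lesssim A^{-1}\ve^{2}$ for $r$ small. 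Without this mechanism, the dominant unbounded term in your difference simply cannot be controlled, so the two elementary estimates you carry out on $\vp_{\omega,k}$ and $\vp_{\alpha,k}$, while correct in themselves, are not sufficient to conclude.
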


\begin{proof}
Suppose that $p\in(B_{j,3R}\setminus B_{j,2R})\cap B_{i,R}$. For plurisubharmonic function $\ti{u}_{\infty,i}$ on $B_{i,3R}$, the maximum function $M_{\ti{u}_{\infty,i}}(\cdot,r)$ is continuous, and so $\nu_{\ti{u}_{\infty,i}}(\cdot,r)$ is also continuous. Since $\nu_{\ti{u}_{\infty,i}}(\cdot,r)$ is decreasing with respect to $r$ and the limit function is the Lelong number $\nu_{u_{\infty}}(\cdot)$. By Dini-Cartan lemma (see e.g. \cite[Lemma 2.2.9]{Helmsbook}), choosing $r_{0}$ sufficiently small, for any $r\in(0,r_{0})$,
\begin{equation}\label{nu i bound}
\nu_{\ti{u}_{\infty,i}}(p,r)
\leq \nu_{u_{\infty}}(p)+A^{-1}\ve^{2} = A^{-1}\ve^{2},
\end{equation}
where we used that $u_{\infty}$ has zero Lelong number everywhere and $A$ is a large constant to be determined later. Similarly,
\begin{equation}\label{nu j bound}
\nu_{\ti{u}_{\infty,j}}(p,r) \leq A^{-1}\ve^{2}.
\end{equation}

First, we establish the upper bound of $\ti{\vp}_{j,r}(p)$. Combining \eqref{nu j bound} and Lemma \ref{Chen lemma 4.2},
\[
\begin{split}
\ti{u}_{\infty,j}^{(r)}(p) \leq {} & M_{\ti{u}_{\infty,j}}(p,r)
\leq M_{\ti{u}_{\infty,j}}\left(p,\frac{r}{2}\right)+(\log 2)A^{-1}\ve^{2} \\[2mm]
= {} & \sup_{B_{j,\frac{r}{2}}(p)}\ti{u}_{\infty,j}+(\log 2)A^{-1}\ve^{2} \\
= {} & \sup_{B_{j,\frac{r}{2}}(p)}(u_{\infty}+\vp_{\chi,j})+(\log 2)A^{-1}\ve^{2} \\
\leq {} & \sup_{B_{j,\frac{r}{2}}(p)}u_{\infty}+\sup_{B_{j,\frac{r}{2}}(p)}\vp_{\chi,j}+A^{-1}\ve^{2}.
\end{split}
\]
Then we compute
\[
\begin{split}
\ti{\vp}_{j,r}(p)
= {} & \ti{u}_{\infty,j}^{(r)}(p)-\vp_{\chi,j}^{(r)}(p)
+\vp_{\alpha,j}^{(r)}(p)-\vp_{\alpha,j}(p)
-\ve^{2}\vp_{\omega,j}(p) \\
\leq {} & \sup_{B_{j,\frac{r}{2}}(p)}u_{\infty}+\left(\sup_{B_{j,\frac{r}{2}}(p)}\vp_{\chi,j}
-\vp_{\chi,j}(p)\right)+\left(\vp_{\chi,j}(p)-\vp_{\chi,j}^{(r)}(p)\right) \\
& +\left(\vp_{\alpha,j}^{(r)}(p)-\vp_{\alpha,j}(p)\right)+A^{-1}\ve^{2}-\ve^{2}\vp_{\omega,j}(p) \\
\leq {} & \sup_{B_{j,\frac{r}{2}}(p)}u_{\infty}+3Kr+A^{-1}\ve^{2}-3\ve^{2}R^{2}.
\end{split}
\]

Next, we apply the similar argument to establish the lower bound of $\vp_{i,r}(p)$. Combining \eqref{nu i bound} and Lemma \ref{Chen lemma 4.2},
\[
\begin{split}
\ti{u}_{\infty,i}^{(r)}(p) \geq {} & M_{\ti{u}_{\infty,i}}(p,r)-A^{-1}\tau_{0}\ve^{2} \\[2mm]
= {} & \sup_{B_{i,r}(p)}\ti{u}_{\infty,i}-A^{-1}\tau_{0}\ve^{2} \\
= {} & \sup_{B_{i,r}(p)}(u_{\infty}+\vp_{\chi,i})-A^{-1}\tau_{0}\ve^{2} \\
\geq {} & \sup_{B_{i,r}(p)}u_{\infty}+\inf_{B_{i,r}(p)}\vp_{\chi,i}-A^{-1}\tau_{0}\ve^{2}.
\end{split}
\]
Then we compute
\[
\begin{split}
\ti{\vp}_{i,r}(p) = {} & \ti{u}_{\infty,i}^{(r)}(p)-\vp_{\chi,i}^{(r)}(p)+\vp_{\alpha,i}^{(r)}(p)
-\vp_{\alpha,i}(p)-\ve^{2}\vp_{\omega,i}(p) \\
\geq {} & \sup_{B_{i,r}(p)}u_{\infty}
+\left(\inf_{B_{i,r}(p)}\vp_{\chi,i}-\vp_{\chi,i}(p)\right)
+\left(\vp_{\chi,i}(p)-\vp_{\chi,i}^{(r)}(p)\right) \\
& +\left(\vp_{\alpha,i}^{(r)}(p)-\vp_{\alpha,i}(p)\right)
-A^{-1}\tau_{0}\ve^{2}-\ve^{2}\vp_{\omega,i}(p) \\[1mm]
\geq {} & \sup_{B_{i,r}(p)}u_{\infty}-3Kr-A^{-1}\tau_{0}\ve^{2}-2\ve^{2}R^{2}.
\end{split}
\]
Now, combining the upper bound of $\ti{\vp}_{j,r}(p)$ and lower bound of $\ti{\vp}_{i,r}(p)$, we see that
\[
\begin{split}
\ti{\vp}_{i,r}(p)-\ti{\vp}_{j,r}(p)
\geq \left(\sup_{B_{i,r}(p)}u_{\infty}-\sup_{B_{j,\frac{r}{2}}(p)}u_{\infty}\right)
-6Kr-(\tau_{0}+1)A^{-1}\ve^{2}+\ve^{2}R^{2}.
\end{split}
\]
Using $B_{j,\frac{r}{2}}(p)\subset B_{i,r}(p)$ and choosing $A=\frac{3\tau_{0}+3}{R^{2}}$,
\[
\ti{\vp}_{i,r}(p)-\ti{\vp}_{j,r}(p)
\geq -6Kr+\frac{2\ve^{2}R^{2}}{3}.
\]
Choosing $r_{0}=\frac{\ve^{2}R^{2}}{36K}$, we are done.
\end{proof}

\subsection{Proof of Theorem \ref{subsolution}}
Now we are in a position to prove Theorem \ref{subsolution}.

\begin{proof}[Proof of Theorem \ref{subsolution}]
Let $\ti{\vp}$ be the regularized maximum of $(B_{i,3R},\ti{\vp}_{i,r})$. Lemma \ref{gluing lemma} shows that $\ti{\vp}\in C^{\infty}(X)$ by \cite[Lemma I.5.18]{Demaillybook}. Thanks to \eqref{gluing eqn 1},
\[
\alpha_{\ti{\vp}} \in \Gamma_{\omega,\theta_{0},\Theta_{0}} \ \
\text{on $X$},
\]
which implies $\ti{\vp}$ is the required subsolution.
\end{proof}

\section{Proof of $(4)\Rightarrow(1)$ in Theorem \ref{main theorem}}\label{Section: main result 2}
In this section, we give the proof of $(4)\Rightarrow(1)$ in Theorem \ref{main theorem}.

\begin{lma}\label{coerciveness lemma}
There exists $C(\alpha,\omega,X)$ such that for any $\vp\in\H$,
\[
\begin{split}
& \int_{0}^{1}\int_{X}(-\vp)
\left(a_{0}\Im(\alpha_{t\vp}+\sqrt{-1}\omega)^{n}-\omega^{n}\right)dt \\
\geq {} & \frac{a_{0}}{C}\int_{X}(-\vp)\left(\Im(\alpha_{\vp}+\sqrt{-1}\omega)^{n}
-\Im(\alpha+\sqrt{-1}\omega)^{n}\right)-C.
\end{split}
\]
\end{lma}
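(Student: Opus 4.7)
The plan is to introduce the monotone quantity
\[
h(t)=\int_{X}(-\vp)\Im(\alpha_{t\vp}+\sqrt{-1}\omega)^{n}, \quad t\in[0,1],
\]
and reduce the asserted inequality to showing $\int_{0}^{1}h(t)\,dt\geq C^{-1}h(1)$, which will follow by combining monotonicity of $h$ with Lemma~\ref{s Im control}.

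First, both sides are invariant under $\vp\mapsto\vp+c$: the form $\alpha_{t\vp}$ depends only on $\ddb\vp$, and the additive pieces vanish because
\[
\int_{X}\bigl(a_{0}\Im(\alpha_{t\vp}+\sqrt{-1}\omega)^{n}-\omega^{n}\bigr)=0, \qquad \int_{X}\bigl(\Im(\alpha_{\vp}+\sqrt{-1}\omega)^{n}-\Im(\alpha+\sqrt{-1}\omega)^{n}\bigr)=0,
\]
by cohomological invariance and the definition of $a_{0}$. So I normalize $\sup_{X}\vp=0$; then $-\vp\geq 0$, and Lemma~\ref{quasi-plurisubharmonic} gives $\vp\in\PSH(X,\chi)$ with $\int_{X}(-\vp)\chi^{n}\leq C$. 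Since $\omega^{n}$ and $\Im(\alpha+\sqrt{-1}\omega)^{n}$ are bounded by constant multiples of $\chi^{n}$, the quantities $\int_{X}(-\vp)\omega^{n}$ and $h(0)\in[0,C]$ are both bounded (with $h(0)\geq 0$ from positivity of $-\vp$ and $\Im(\alpha+\sqrt{-1}\omega)^{n}$), and the inequality reduces, after absorbing bounded terms into the additive constant, to proving $\int_{0}^{1}h(t)\,dt\geq C^{-1}h(1)$.

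To prove monotonicity of $h$, differentiate under the integral and integrate by parts using that $(\alpha_{t\vp}+\sqrt{-1}\omega)^{n-1}$ is $d$-closed:
\[
h'(t)=-n\int_{X}\vp\,\ddb\vp\wedge\Im(\alpha_{t\vp}+\sqrt{-1}\omega)^{n-1}=n\int_{X}\sqrt{-1}\partial\vp\wedge\dbar\vp\wedge\Im(\alpha_{t\vp}+\sqrt{-1}\omega)^{n-1}.
\]
At each point, diagonalizing $\omega$ and $\alpha_{t\vp}$ simultaneously with eigenvalues $\lambda_{k}(t)$, a direct calculation gives
\[
\sqrt{-1}\partial\vp\wedge\dbar\vp\wedge\Im(\alpha_{t\vp}+\sqrt{-1}\omega)^{n-1}=\frac{1}{n}\Bigl(\sum_{l=1}^{n}|\partial_{l}\vp|^{2}\sin(Q^{(l)}(t))\prod_{k\neq l}\sqrt{1+\lambda_{k}(t)^{2}}\Bigr)\omega^{n},
\]
where $Q^{(l)}(t)=\sum_{k\neq l}\arccot(\lambda_{k}(t))$. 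Since $\alpha$ and $\alpha_{\vp}$ both lie in the open convex cone $\Gamma_{\omega,0,\Theta_{0}}$, so does the convex combination $\alpha_{t\vp}$, which forces $Q_{\omega}(\alpha_{t\vp})<\Theta_{0}<\pi$ and in turn $Q^{(l)}(t)\in(0,\pi)$, so that $\sin(Q^{(l)}(t))>0$ and $h'(t)\geq 0$.

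Finally, applying Lemma~\ref{s Im control} with $s=\tfrac{1}{2}$ yields $\Im(\alpha_{\vp/2}+\sqrt{-1}\omega)^{n}\geq C^{-1}\Im(\alpha_{\vp}+\sqrt{-1}\omega)^{n}$; multiplying by $-\vp\geq 0$ and integrating gives $h(\tfrac{1}{2})\geq C^{-1}h(1)$. Combined with monotonicity,
\[
\int_{0}^{1}h(t)\,dt\geq \int_{1/2}^{1}h(t)\,dt\geq \tfrac{1}{2}h(\tfrac{1}{2})\geq \tfrac{1}{2C}h(1),
\]
which closes the reduction. The main obstacle is the pointwise positivity computation for $h'(t)$: it relies both on the explicit diagonalized formula for the $(n,n)$-form $\sqrt{-1}\partial\vp\wedge\dbar\vp\wedge\Im(\alpha_{t\vp}+\sqrt{-1}\omega)^{n-1}$ and on the convexity of $\Gamma_{\omega,0,\Theta_{0}}$, which keeps $\alpha_{t\vp}$ inside the admissible cone along the linear path from $\alpha$ to $\alpha_{\vp}$.
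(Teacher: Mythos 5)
Your proposal is correct and is essentially the approach the paper takes: normalize $\sup_X\vp=0$, use $\vp\in\PSH(X,\chi)$ (Lemma~\ref{quasi-plurisubharmonic}) to bound the $\omega^n$-piece, reduce to controlling $\int_0^1 h(t)\,dt$ by $h(1)$, and invoke Lemma~\ref{s Im control}. The only departure is that the paper applies Lemma~\ref{s Im control} pointwise in $t$ across the whole interval $[\tfrac12,1]$ to get $\int_{1/2}^1 h(t)\,dt\geq \tfrac{1}{2C}h(1)$ directly, whereas you apply it only at $t=\tfrac12$ and then separately prove the monotonicity of $h$ (via integration by parts, the pointwise diagonalized formula, and convexity of the phase cone) to pass from $h(\tfrac12)$ to $\int_{1/2}^1 h$. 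Your monotonicity lemma is correct and self-contained, but it is an extra step the argument doesn't need: Lemma~\ref{s Im control} is already uniform in $s\in[\tfrac12,1]$, so the one-line integral bound that the paper uses renders the $h'(t)\geq 0$ computation superfluous. Both routes close cleanly.
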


\begin{proof}
We assume without loss of generality that $\sup_{X}\vp=0$. Since $\vp\in\PSH(X,\chi)$ from Lemma \ref{quasi-plurisubharmonic}, then $\int_{X}(-\vp)\omega^{n}\leq C$. It suffices to show
\[
\int_{0}^{1}\int_{X}(-\vp)\Im(\alpha_{t\vp}+\sqrt{-1}\omega)^{n}dt
\geq \frac{1}{C}\int_{X}(-\vp)\Im(\alpha_{\vp}+\sqrt{-1}\omega)^{n}.
\]
Thanks to Lemma \ref{s Im control},
\[
\begin{split}
& \int_{0}^{1}\int_{X}(-\vp)\Im(\alpha_{t\vp}+\sqrt{-1}\omega)^{n}dt \\
\geq {} & \int_{\frac{1}{2}}^{1}\int_{X}(-\vp)\Im(\alpha_{t\vp}+\sqrt{-1}\omega)^{n}dt \\
\geq {} & \frac{1}{C}\int_{X}(-\vp)\Im(\alpha_{\vp}+\sqrt{-1}\omega)^{n},
\end{split}
\]
as required.
\end{proof}

Now we are in a position to prove $(4)\Rightarrow(1)$ in Theorem \ref{main theorem}.

\begin{proof}[Proof of $(4)\Rightarrow(1)$ in Theorem \ref{main theorem}]
For any $\vp\in\H$, we consider the twisted dHYM flow starting from $\vp$. By Theorem~\ref{smooth convergence}, if the dHYM equation admits a solution, then for $\ve$ sufficiently small, the twisted dHYM flow admits a long-time solution and will converge to a stationary solution $\hat{\vp}_{\ve}$. Fixing such an $\ve$ and using $\J_{\ve}$ is decreasing along twisted dHYM flow, we obtain
\[
\J_{\ve}(\vp) \geq \J_{\ve}(\hat{\vp}_{\ve}) \geq -C.
\]
By definitions of $\J_{0}$ and $\J_{\ve}$,
\[
\J_{0}(\vp) = \J_{\ve}(\vp)+\ve\int_{0}^{1}\int_{X}(-\vp)\left(a_{0}\Im(\alpha_{t\vp}+\sqrt{-1}\omega)^{n}-\omega^{n}\right)dt.
\]
Thanks to Lemma \ref{coerciveness lemma},
\[
\J_{0}(\vp)
\geq \frac{a_{0}\ve}{C}\int_{X}(-\vp)\left(\Im(\alpha_{\vp}+\sqrt{-1}\omega)^{n}
-\Im(\alpha+\sqrt{-1}\omega)^{n}\right)-C.
\]
Combining this with $\J=\sin(\theta_{0})\J_{0}$ (see Lemma \ref{J J 0 relationship}), we obtain $\J$ is coercive.
\end{proof}

\end{document}